\newcommand{\pres}[2]{\bigl\langle #1\:|\:#2 \bigr\rangle}
\newcommand{\Gpres}[2]{\operatorname{Gp}\bigl\langle #1\:|\:#2 \bigr\rangle}
\tikzset{
    >=stealth',
    punkt/.style={
           rectangle,
           rounded corners,
           draw=black, very thick,
           text width=6.5em,
           minimum height=2em,
           text centered},
    pil/.style={
           ->,
           thick,
           shorten <=2pt,
           shorten >=2pt,}
}
\tikzset{every loop/.style={min distance=2mm,in=225,out=135,looseness=10}}
\tikzset{->-/.style={decoration={
  markings,
  mark=at position #1 with {\arrow{>}}},postaction={decorate}}}
\newcommand\GreenL{\mathcal{L}}
\newcommand\GreenR{\mathcal{R}}
\newcommand\GreenH{\mathcal{H}}  
\newcommand\GreenD{\mathcal{D}}
\newcommand\GreenJ{\mathcal{J}}
\newtheorem{theorem}{Theorem}[section]
\newtheorem{question}[theorem]{Question}
\newtheorem{lemma}[theorem]{Lemma}
\newtheorem{corollary}[theorem]{Corollary}
\newtheorem{proposition}[theorem]{Proposition}
\theoremstyle{definition}
\newtheorem{example}[theorem]{Example}
\newtheorem{remark}[theorem]{Remark}
\newtheorem{definition}[theorem]{Definition}
\newcommand{\sgw}{S\Gamma(w)}
\newcommand{\sgo}{S\Gamma(1)}
\newcommand{\gr}{\mathcal{R}}
\newcommand{\roi}{{$\GreenR_1$-injective}}
\begin{document}

\title[Subgroups of Special Inverse Monoids]{Subgroups of $E$-unitary and $\GreenR_1$-injective \\ Special Inverse Monoids}

\keywords{special inverse monoid, maximal subgroup, group $\GreenH$-class, $E$-unitary, $\GreenR_1$-injective}
\subjclass[2020]{20M18, 20M05}
\maketitle

\begin{center}
    ROBERT D. GRAY\footnote{School of Engineering, Mathematics and Physics, University of East Anglia, Norwich NR4 7TJ, England.
Email \texttt{Robert.D.Gray@uea.ac.uk}.}
\ and \ MARK KAMBITES\footnote{Department of Mathematics, University of Manchester, Manchester M13 9PL, England. Email \texttt{Mark.Kambites@manchester.ac.uk}.

This research was supported by the EPSRC-funded projects EP/N033353/1 `Special inverse monoids: subgroups, structure, geometry, rewriting systems and the word problem' and EP/V032003/1 ‘Algorithmic, topological and geometric aspects of infinite groups, monoids and inverse semigroups’, and by a London Mathematical
Society Research Reboot Grant.
} \\
\end{center}

\begin{abstract}
We continue the study of the structure of general subgroups (in particular maximal subgroups, also known as group $\GreenH$-classes) of special inverse monoids. Recent research of the authors
has established that these can be quite wild, but in this paper we show that if we restrict to special inverse monoids which are \textit{$E$-unitary} (or have a weaker property we call \textit{$\GreenR_1$-injectivity}), the maximal subgroups are strongly governed by the group of units. In particular, every maximal subgroup has a finite index
subgroup which embeds in the group of units. We give a construction to show that every finite group can arise as a maximal subgroup in an \roi\  special inverse monoid with trivial group of units.
It remains open whether every combination of a group $G$ and finite index subgroup $H$ can arise as maximal subgroup and group of units.
\end{abstract}

\section{Introduction}\label{sec_intro}

In this paper we continue the study of so-called \textit{special inverse monoids}: those inverse monoids that admit inverse monoid presentations in which each defining relation has the form $w = 1$. Motivation for this research programme comes both from the beautiful geometric nature of these monoids, and from connections to other areas of semigroup theory and geometric group theory, such as possible applications to the decidability or otherwise of the \textit{one-relator word problem for monoids} (a century-old problem widely regarded as one of the hardest and most important open problems in semigroup theory). To avoid duplication we shall refrain from comprehensive historical discussion, referring instead to the survey article of Meakin \cite{Meakin:2007zt} for the early theory and the introduction to our own recent article \cite{GrayKambitesHClasses} for
subsequent developments.

A key philosophical question about special inverse monoids is the extent to which the structure of the whole monoid is governed by the structure locally around
the identity element (by the units, left and right units or more generally by \textit{Green's $\GreenD$-class} and \textit{$\GreenJ$-class} of $1$). In the case of
special (non-inverse) monoids it is known that the monoid is quite strongly governed in this way: for example all the maximal subgroups are isomorphic
to the group of units \cite{Malheiro2005}. Early results about special inverse monoids suggested that the same kind of relationship might hold, but more recent work has
led to the realisation 
that things are more complex. We recently \cite{GrayKambitesHClasses} studied the possible maximal subgroups (also known as \textit{group $\GreenH$-classes}) which can arise in finitely presented special inverse monoids, answering a question of the first author and Ru\v{s}kuc by showing that the possible groups of units are exactly the finitely generated recursively presented groups, and more generally that the possible group $\GreenH$-classes are exactly the (not necessarily finitely generated) recursively presented groups. This implies in particular that (unlike in the special non-inverse case) the group $\GreenH$-classes are \textit{not} necessarily all isomorphic to the group of units. However, in the examples we constructed it turns out that the group $\GreenH$-classes all \textit{embed} in the group of units, and it is natural to ask if this is always true.

In the present paper we show that this is also not the case: indeed we construct special inverse monoids with trivial group of units and arbitrary finite groups arising as
group $\GreenH$-classes. However, our main theorem is that under a relatively mild (weaker than $E$-unitarity) assumption called \textit{$\GreenR_1$-injectivity},
every $\GreenH$-class is \textit{virtually} embeddable in the group of units, in other words, has a finite index subgroup which embeds in the
group of units.

In addition to this introduction the paper is divided into six sections. Section~\ref{sec_prelim} fixes notation and collects some basic facts about (mostly special) inverse monoids, some of which are folklore but some new and potentially of independent interest. Section~\ref{sec_roi} introduces and studies the new property of $\GreenR_1$-injectivity. Section~\ref{sec_blocks} shows that the Sch\"utzenberger graphs of \roi\ special inverse monoids all admit a certain kind of \textit{block decomposition}, which makes it relatively straightforward to understand their structure modulo the Sch\"utzenberger graph of right units. In Section~\ref{sec_subgroups}
we apply this block decomposition to study maximal subgroups of \roi\ special inverse monoids, in particular establishing that they are constrained to admit a finite index subgroup which embeds in the group of units. Section~\ref{sec_construction} goes some way towards proving the previous theorem ``sharp'', by constructing a family of examples with trivial group of
units but arbitrary finite groups arising as maximal subgroups.
Finally, Section~\ref{sec_nonroi}
digresses slightly to note an interesting fact about special inverse monoids with a generator which is neither a right nor a left unit: in such monoids \textit{every} finite subgroup of the
group of units arises as the maximal subgroup around some idempotent.

\section{Special Inverse Monoids}\label{sec_prelim}

In this section we fix notation and collect together some preliminary results about special inverse monoids; some of these are folklore known to experts but hard to find in the literature, while others (most notably Theorem~\ref{thm_ddesc}) are new and likely to be of independent interest.

Let $A$ be a (typically, but not necessarily, finite) alphabet, and let $A^{\pm 1}$ denote the union of $A$ with a disjoint alphabet $\lbrace a^{-1} \mid a \in A \rbrace$.
We extend the inverse operation to be an involution on $A^{\pm 1}$ by defining $(a^{-1})^{-1} = a$, and to words over $A^{\pm 1}$ by
$(a_1 \dots a_n)^{-1} = a_n^{-1} \dots a_1^{-1}$. For brevity we will often write $w'$ instead of $w^{-1}$. Where $A$ is viewed as a choice of generators for
an inverse monoid $M$, we will sometimes write $\overline{w}$ to denote the element of $M$ represented by a word $w \in (A^{\pm 1})^*$.

Recall that the inverse monoid defined by the presentation $\langle A \mid R \rangle$, 
where $R \subseteq (A^{\pm 1})^* \times (A^{\pm 1})^* $
 is the quotient of the free inverse monoid on $A$ by the congruence generated by 
$R$. All presentations in this paper will be inverse monoid presentations unless stated otherwise. 
An inverse monoid presentation is called \textit{special} if all relations have the form $w=1$, and an inverse monoid is called special if it admits a
special inverse monoid presentation.
We shall assume familiarity with (special) inverse monoids, as well as standard ideas in the
field such as Green's relations, Sch\"utzenberger graphs, Stephen's procedure, $E$-unitarity and the maximal group image. 
The reader unfamiliar with these is directed to
\cite{LawsonBook98} for the classical theory of inverse monoids in general, and \cite{GrayKambitesHClasses} for the more recent theory of special inverse monoids.
Recall that a graph is called \emph{bi-deterministic} if no two edges with the same label share a start vertex or an end vertex.

If $m \in M$ we write $S \Gamma(m)$ for the (right) Sch\"utzenberger graph of $m$, and $\mathcal{H}_m$, $\mathcal{R}_m$ and so forth for the equivalences classes
of $m$ under Green's various relations on $M$. 
If $M$ is an inverse monoid generated by a set $A$ then $S\Gamma(m)$ is a bi-deterministic $(A \cup A^{-1})$-labelled graph. 
We define the root of the graph $S\Gamma(m)$ to be the vertex $mm^{-1}$, that is, the unique vertex in this graph that corresponds to an idempotent element of the inverse monoid $M$. 
We say that a word $w \in (A \cup A^{-1})^*$ can be \emph{be read from the root of $S\Gamma(m)$} if there is a path in $S\Gamma(m)$ labelled by the word $w$ that starts at the vertex $mm^{-1}$.     

\begin{remark}\label{rem:TreeOfBalls} 
The Sch\"{u}tzenberger graphs considered in this paper will usually not be finite. For this reason, when we talk about obtaining these graphs ``using Stephen's procedure'' we are referring to the most general version of Stephen's graphical approach to inverse monoids, as developed in his PhD thesis 
\cite{StephenThesis},
and then later applied in its general form in papers such as  
\cite{Stephen98}
and 
\cite{MMS1990}.
Let us now explain one of the key ways that the results of Stephen will be used in this paper. Full details can be found in 
\cite{StephenThesis}
and also 
\cite{Stephen98}
and 
\cite{MMS1990}.

Let $M = \pres{A}{R}$ be a finitely presented special inverse monoid and let $w \in (A \cup A^{-1})^*$.  As explained in 
\cite[Section 3]{Stephen98}
(from which we adopt the same definitions and notation) 
the Sch\"{u}tzenberger graph $S\Gamma(w)$ can be obtained as the colimit of directed system of graphs obtained by starting with a line $L_w$  labelled $w$ and performing expansions (which involves adding a cycle labelled by a defining relator $r$ at a vertex of a graph where $r$ cannot yet be read) and folding (which involves identifying two directed edges with the same label and the same initial or terminal vertex). The colimit of this system is called the \emph{closed form} of $L_w$ and is denoted $Cl_R(L_w)$. Any object in this system, or colimit of a subsystem, is called an \emph{approximate graph} of $Cl_R(L_w)$. Alternatively an approximate graph can be described in language-theoretic terms; see 
\cite[page 102, Subsection 5.2]{StephenThesis}
for details.  In particular observe that an approximate graph can be infinite. The closed form $Cl_R(L_w)$ of $L_w$ is closed in the sense that no nontrivial expansions or edge foldings can be performed on $Cl_R(L_w)$. It is proved in Stephen's thesis 
\cite[Theorem 5.10]{StephenThesis}
that for any approximate graph $\Gamma$ of $Cl_R(L_w)$ we have $Cl_R(\Gamma) = Cl_R(L_w) = S\Gamma(w)$. In other words, the closed form of any approximate graph of $S\Gamma(w)$ is equal to $S\Gamma(w)$.

Now, let $T$ be the (non-bi-deterministic) infinite graph constructed iteratively by starting with the line $L_w$, and adding a cycle labelled by $r$ at every vertex for every $r \in R$, but not performing any edge folding. It follows from the results of Stephen outlined in the previous paragraph that $S\Gamma(w)$ is obtained by bi-determinising $T$. Indeed, $T$ is an approximate graph for $S\Gamma(w)$ since $T$ is clearly the colimit of the subsystem of all graphs that can be obtained from $L_w$ by only performing expensions and never any foldings. 
(Alternatively one may show that $T$ is an approximate graph $S\Gamma(w)$ by verifying that it satisfies the language-theoretic description of approximate graphs in \cite[page 102, Subsection 5.2]{StephenThesis}.)
Then from the previous paragraph it follows that $Cl_R(T) = S\Gamma(w)$. We claim that the closure $Cl_R(T)$ of $T$ is equal to the bi-determinised form of $T$. Indeed, the collection of graphs $\mathcal{C}$ that can be obtained from $T$ via a finite sequence of expansions or edge foldings is the same as the graphs that can be obtained from $T$ by finite sequence of edge-foldings. This is because no expansions can be applied to $T$ and after a finite sequence of folding is applied to $T$ it is still the case that no expansions can be applied to the resulting graph. Then it follows that the closure of $T$ is the colimit of the collection $\mathcal{C}$ of all partial determinisations of $T$ which is the graph obtained by bi-determinising $T$. But $S\Gamma(w)$ is equal to the colimit of $T$, and so $S\Gamma(w)$ is the graph obtained by bi-determinising $T$. 
Alternatively, one may verify that the the graph obtained by bi-determinising $T$ is clearly closed (in the sense that no non-trivial expansions or foldings can be applied to it) and it is an approximate graph of $S\Gamma(w)$ (since it can be seen to satisfy the language-theoretic description of approximate graphs in \cite[page 102, Subsection 5.2]{StephenThesis})
and it then follows, since the closure of $L_w$ is unique 
\cite[Theorem 3]{MMS87},
that $S\Gamma(w)$ is the graph obtained by bi-determinising $T$. 

As a slight variation on the description given in the previous paragraph, if we begin with $L_w$ and attach a copy of $S\Gamma(1)$ to each vertex of $L_w$, then the resulting graph $\Lambda$ can also be seen to be an approximate graph of $S\Gamma(w)$ and, arguing as in the previous paragraph, the closure of $\Lambda$ is equal to the graph obtained by bi-determinising $\Lambda$, which must equal $S\Gamma(w)$. 
  \end{remark}

The following is well known and easy to prove directly from the definitions.

\begin{proposition}\label{prop_greenschutz}
Let $w$ be a word over the generators for an inverse monoid $M$.
Then $w$ represents:
\begin{itemize}
\item an idempotent element if and only if every path labelled by $w$ in every Sch\"utzenberger graph is a closed path;
\item an element of $\GreenJ_1$ if and only if it labels a path somewhere in $\sgo$;
\item an element of $\GreenR_1$ (a \textit{right unit}) if and only if it labels a path in $\sgo$ starting at the identity;
\item an element of $\GreenL_1$ (a \textit{left unit}) if and only if it labels a path in $\sgo$ ending at the identity; and
\item an element of $\GreenH_1$ if and only if it labels a path in $\sgo$ starting at the identity and a path in $\sgo$ ending at the identity.
\end{itemize}
\end{proposition}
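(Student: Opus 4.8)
The plan is to reduce all five claims to a \emph{reading lemma} for Sch\"utzenberger graphs, together with one order-theoretic fact, and then treat the bullet points in turn. The reading lemma states: a word $u = a_1\cdots a_n$ over $A^{\pm1}$ labels a path from a vertex $r$ to a vertex $s$ of $S\Gamma(m)$ if and only if $r, s \in \GreenR_m$ and $r\ol u = s$. The only point of substance is that, granting $r\ol u = s$ with both endpoints in $\GreenR_m$, the intermediate products $r\ol{a_1\cdots a_k}$ also lie in $\GreenR_m$: indeed $r\ol u \in r\ol{a_1\cdots a_k}M \subseteq rM$ while $r\ol u\,\GreenR\,r$, so the principal right ideals $r\ol{a_1\cdots a_k}M$ all equal $rM$, whence each $r\ol{a_1\cdots a_k}$ lies in $\GreenR_m$ and consecutive ones are joined by an edge of $S\Gamma(m)$. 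The order-theoretic fact is that the natural partial order of an inverse monoid is trivial on each $\GreenR$-class: if $a \leq b$ and $a\,\GreenR\,b$ then $a = aa^{-1}b = bb^{-1}b = b$.

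For the idempotent clause, suppose first that $\ol w$ is idempotent, so $\ol w \leq 1$. If $w$ labels a path from $r$ to $s$ in some $S\Gamma(m)$, then $s = r\ol w \leq r$ and $s\,\GreenR\,r$ (both are vertices), so $s = r$ and the path is closed. Conversely, suppose every path labelled $w$ in every Sch\"utzenberger graph is closed. The element $\ol{ww'} = \ol w\,\ol{w}^{-1}$ is idempotent and $\GreenR$-related to $\ol w$, hence a vertex of $S\Gamma(\ol w)$, and $\ol{ww'}\,\ol w = \ol w$; so by the reading lemma $w$ labels a path from $\ol{ww'}$ to $\ol w$ in $S\Gamma(\ol w)$. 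That path must be closed, so $\ol w = \ol{ww'}$ is idempotent.

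For the remaining clauses, note first that $1 \in \GreenR_1$ and that any path labelled $w$ starting at $1$ in $\sgo$ ends at $1\cdot\ol w = \ol w$; so by the reading lemma $w$ labels a path starting at $1$ in $\sgo$ if and only if $\ol w \in \GreenR_1$, giving the right-unit clause. Dually, $w$ labels a path ending at $1$ in $\sgo$ if and only if there is some $x \in \GreenR_1$ with $x\ol w = 1$, and such an $x$ exists precisely when $\ol w \in \GreenL_1$ (if $x\ol w = 1$ with $x$ a right unit then $x$ is a left inverse of $\ol w$; conversely if $\ol v\,\ol w = 1$ then $\ol v \in \GreenR_1$ and we may take $x = \ol v$). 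The $\GreenH_1$ clause follows since $\GreenH_1 = \GreenR_1 \cap \GreenL_1$. For the $\GreenJ_1$ clause, recall $\ol w \in \GreenJ_1$ iff $u\ol w v = 1$ for some $u, v \in M$. If $\ol w \in \GreenJ_1$, write $u = \ol p$ and $v = \ol q$; then $\ol{pwq} = 1 \in \GreenR_1$, so $pwq$ labels a path from $1$ in $\sgo$, and the stretch of that path that reads $w$ is a path labelled $w$ somewhere in $\sgo$. Conversely, if $w$ labels a path from $x$ to $y$ in $\sgo$ then $x, y \in \GreenR_1$ and $x\ol w = y$; choosing $z$ with $yz = 1$ gives $x\ol w z = 1$, so $\ol w \in \GreenJ_1$.

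No step here is deep. The only places calling for a little care are the reading lemma itself --- in particular the verification that the intermediate vertices of an alleged path really do lie in the relevant $\GreenR$-class --- and, in the converse half of the idempotent clause, the idea of passing to $S\Gamma(\ol w)$ and isolating the specific path from $\ol{ww'}$ to $\ol w$ from which the conclusion is extracted.
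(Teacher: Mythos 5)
Your proposal is correct. The paper states Proposition~\ref{prop_greenschutz} without proof, calling it ``well known and easy to prove directly from the definitions,'' and your argument --- the reading lemma that $u$ labels a path from $r$ to $s$ in $S\Gamma(m)$ iff $r,s\in\GreenR_m$ and $r\ol{u}=s$ (with the check that intermediate products stay in $\GreenR_m$), plus triviality of the natural partial order on $\GreenR$-classes for the idempotent clause --- is exactly the intended direct verification, carried out in full.
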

Notably missing from Proposition~\ref{prop_greenschutz} is a description of words representing elements of $\GreenD_1$. For inverse monoids in general there is no easy way to describe these but, surprisingly, in special inverse monoids there is a very nice description akin to those above:
\begin{theorem}\label{thm_ddesc}
Let $M$ be a special inverse monoid generated by $X$, and $w$ a word over $X^{\pm 1}$. Then the following are equivalent:
\begin{itemize}
\item[(i)] $w$ represents an element of $\GreenD_1$;
\item[(ii)] $w$ labels a path in $\sgo$ which passes through (or starts or ends at) the vertex $1$; and
\item[(iii)] there is a decomposition $w = uv$ (as words, possibly empty) where $u$ represents a left unit and $v$ represents a right unit.
\end{itemize}
\end{theorem}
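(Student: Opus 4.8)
The plan is to prove the cycle of implications (iii) $\Rightarrow$ (i) $\Rightarrow$ (ii) $\Rightarrow$ (iii), the first and last being routine and the middle one carrying the content.

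\emph{(iii) $\Rightarrow$ (i).} If $w = uv$ with $\overline{u}$ a left unit and $\overline{v}$ a right unit then $\overline{v}\,\overline{v}^{-1} = 1$, so $\overline{w}\,\overline{w}^{-1} = \overline{u}\,(\overline{v}\,\overline{v}^{-1})\,\overline{u}^{-1} = \overline{u}\,\overline{u}^{-1}$; since $\overline{u}^{-1}\overline{u} = 1$ and every element $x$ of an inverse semigroup witnesses $xx^{-1}\,\GreenD\,x^{-1}x$, the idempotent $\overline{w}\,\overline{w}^{-1}$ is $\GreenD$-related to $1$, whence $\overline{w}\,\GreenD\,1$ because $x\,\GreenD\,y$ if and only if $xx^{-1}\,\GreenD\,yy^{-1}$. \emph{(ii) $\Rightarrow$ (iii).} A path labelled $w$ in $\sgo$ meeting the vertex $1$ factors at a chosen occurrence of $1$ as a path labelled $u$ ending at $1$ followed by a path labelled $v$ starting at $1$ (one of $u, v$ empty if the path starts or ends at $1$); then $w = uv$, and Proposition~\ref{prop_greenschutz} forces $\overline{u}$ to be a left unit and $\overline{v}$ a right unit.

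The heart of the matter is \emph{(i) $\Rightarrow$ (ii)}. Suppose $\overline{w}\,\GreenD\,1$. By the definition of $\GreenD$ there is $c \in M$ with $\overline{w}\,\GreenR\,c\,\GreenL\,1$; thus $c$ is a left unit, and a short computation shows that $b := c^{-1}\overline{w}$ satisfies $\overline{w} = cb$ and $bb^{-1} = 1$, so $b$ is a right unit. Choosing words $p, q$ over $X^{\pm 1}$ representing $c$ and $b$ respectively, the word $pq$ represents $\overline{w}$, while by Proposition~\ref{prop_greenschutz} the word $p$ labels a path in $\sgo$ ending at $1$ and $q$ a path starting at $1$, so $pq$ labels a path in $\sgo$ meeting the vertex $1$. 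It therefore suffices to show that the property ``the word labels \emph{some} path in $\sgo$ meeting the vertex $1$'' is invariant under the defining congruence of $M$: since this property holds of $pq$, it then holds of $w$, which is (ii).

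To prove that invariance I would check the generating relations of the congruence. A free inverse monoid relation replaces a factor $zz^{-1}z$ by $z$, or a factor $zz^{-1}z'z'^{-1}$ by $z'z'^{-1}zz^{-1}$; since $\sgo$ is deterministic and each edge has an inverse edge, the affected subpaths backtrack over one another and hence visit exactly the same set of vertices, so whether the overall path meets $1$ does not change. A special relation inserts or deletes a factor $r_j$; as $\overline{r_j} = 1$ is idempotent, $r_j$ labels a closed path at every vertex of $\sgo$ by Proposition~\ref{prop_greenschutz}, so this merely inserts or deletes such a loop, and the set of vertices visited is again unchanged --- \emph{except} when one deletes an $r_j$-loop based at a vertex $s \neq 1$ that the path traverses so as to meet $1$ at an interior vertex of the loop while avoiding $1$ elsewhere. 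I expect this to be the one genuine obstacle, and I would resolve it thus: the prefix of $r_j$ read along the loop from $s$ to $1$ represents an element $g$ with $sg = 1$, while the complementary suffix reads from $1$ back to $s$ and, being traced in $\sgo$ from the vertex $1$, represents the right unit $s$; hence $gs = \overline{r_j} = 1$, so $s$ is a unit. Left multiplication by $s^{-1}$ is then a graph automorphism of $\sgo$, and applying it to the shortened path yields a path with the same label that meets $1 = s^{-1}s$. The remaining verifications are routine bookkeeping, and this completes (i) $\Rightarrow$ (ii).
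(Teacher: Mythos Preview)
Your proof is correct and follows the same architecture as the paper's: prove (ii)$\Leftrightarrow$(iii) and (iii)$\Rightarrow$(i) routinely, and for (i)$\Rightarrow$(ii) first produce \emph{some} representative of the element satisfying (ii) and then verify that the property ``labels a path in $\sgo$ through $1$'' is preserved by each generator of the defining congruence, with relator deletion being the only delicate case.

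The one point of genuine difference is how you resolve that delicate case. The paper, having factored the relator as $r=ab$ with $pa$ ending at $1$ and $bq$ starting at $1$, observes that $a$ is both a right unit (as a prefix of a relator) and a left unit (as it labels a path ending at $1$), hence a unit; it then deduces $p=(pa)a^{-1}$ is a left unit and dually $q$ is a right unit, so $pq$ itself satisfies (iii). You instead compute that the base vertex $s$ of the deleted loop is a unit and then invoke the automorphism of $\sgo$ given by left multiplication by $s^{-1}$ to translate the shortened path so that it meets $1$. Both arguments hinge on the same underlying fact (a prefix of a relator readable to the vertex $1$ forces a two-sided unit); your automorphism trick is a clean way to finish and avoids the small bookkeeping with $p$ and $q$, while the paper's version has the mild advantage of directly exhibiting the decomposition in (iii) rather than just the path in (ii).
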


\begin{proof}
The equivalence of (ii) and (iii) is immediate from the characterisation of left units and right units given by Proposition~\ref{prop_greenschutz}. If (iii) holds then since $u$ represents a left unit we have $\overline{u}' \  \overline{u} = 1$ so that
$$\overline{w} \  = \overline{u} \ \overline{v} \  \GreenL \ \overline{u}' \ \overline{u} \ \overline{v} \ = \ \overline{v} \ \GreenR \ 1$$
and (i) holds.

What remains, which is the main burden of the proof and the only part which does not hold for inverse monoids in general, is to show that (i) implies (ii) or (iii).

We do this by showing first that every element in $\GreenD_1$ has \textit{some} representative word with a decomposition of the type in (iii). Then we shall make use of the fact that the inverse monoid defined by an inverse monoid presentation   
$\langle A \mid R \rangle$  
is equal to the monoid defined by the infinite monoid presentation 
\[
  \mathrm{Mon}\langle A, A^{-1} |         R,   \alpha \alpha^{-1} \alpha = \alpha, \alpha \alpha^{-1} \beta \beta^{-1} = \beta \beta^{-1} \alpha \alpha^{-1} (\alpha, \beta \in (A \cup A^{-1})^* \rangle.
 \]
This fact follows from \cite[Theorem 5.10.1]{Howie95}. We shall prove below that if we take any word $w$ that decomposes as in (iii) and apply any one of the infinitely many relations from the infinite monoid presentation above, then word one obtains also admits such a decomposition. Combined with the fact that every every element in $\GreenD_1$ has \textit{some} representative word with a decomposition of the type in (iii), this will suffice to show that every word representing an element in $\GreenD_1$ admits a decomposition of the type in (iii).

For the first step, if $s \in \GreenD_1$ then there exists $t \in M$ with $s \GreenL t \GreenR 1$. Because $s \GreenL t$ we have $s's = t't$, so that
$s = ss's = st't = (st')t$. Now $t$ is a right unit by assumption, and this means we have $(st')'(st') = ts'st'=tt'tt' = 1$, so that $st'$ is a left unit. Thus, if we
choose words representing $st'$ and $t$ respectively, concatenating them will yield a word of the required form representing $(st')t = s$.

Now suppose $w$ is any word which factorises as in (iii), or equivalently that it can be read along some path $\pi$ in $\sgo$ which visits $1$. First note that if we replace a factor of the form $x$ with $xx'x$ or vice versa, or replace a factor $uu'vv'$ with $vv'uu'$ then the resulting word can be read along a path in $\sgo$ starting and ending in the same place as $\pi$ and traversing the same set of edges as $\pi$ (possibly different numbers of times and in a different order). So in particular the resulting word can be read along a path in $\sgo$ which visits $1$. If we insert some relator into $w$ then, since every relator can be read around a closed path at every vertex of $\sgo$, the
resulting path can still be read along a path in $\sgo$ which visits $1$. 

Finally, suppose we remove a factor of $w$ which is a relator, say $w = prq$ where $r$ is a relator and we obtain the word $pq$. Since all paths labelled by relators are closed, we may remove a closed subpath from $\pi$ to obtain a path labelled $pq$. If this path still visits $1$ then we are done. If it does not visit $1$ then the closed subpath we removed from $\pi$ must do so, which means there must be a factorisation $r = ab$ (as words) such that $pa$ labels a path ending at $1$ (so $pa$ represents a left unit) and $bq$ labels a path beginning at $1$ (so $bq$ represents a right unit). Now $a$ represents both a right unit (because it is a prefix of a relator) and a left unit (because it can be read along a path ending at $1$), so $a$ is a unit. But this means $p = (pa)a'$ in $M$, so $p$ represents a left unit, and by
Proposition~\ref{prop_greenschutz} can be read along a path ending at $1$. By a dual argument, $b$ also represents a unit,
so $q = b'(bq)$ in $M$ and $q$ is a right unit and can be read along a path starting at $1$. Thus, the resulting word $pq$ can be read along a path which visits $1$.
\end{proof}

\begin{corollary}\label{cor_ddesc}
A generator in a special inverse monoid presentation represents an element of $\GreenD_1$ if and only if represents an element of $\GreenR_1 \cup \GreenL_1$.
\end{corollary}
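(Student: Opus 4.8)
The plan is to deduce this directly from Theorem~\ref{thm_ddesc} applied to the one-letter word $w = x$, where $x$ is the generator in question. The reverse implication needs essentially no work: since $\GreenR$ and $\GreenL$ are both contained in $\GreenD$, any generator representing a left or right unit automatically represents an element of $\GreenD_1$. (Equivalently, this falls straight out of the implication (iii)$\Rightarrow$(i) of Theorem~\ref{thm_ddesc} using one of the two trivial factorisations $x = \varepsilon x$ or $x = x \varepsilon$, the empty word representing the two-sided identity.)

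For the forward implication I would argue as follows. Suppose the generator $x$ represents an element of $\GreenD_1$. By the equivalence (i)$\Leftrightarrow$(iii) of Theorem~\ref{thm_ddesc}, there is a factorisation $x = uv$ as words (possibly empty) with $u$ representing a left unit and $v$ a right unit. But $x$ is a single letter, so the only such factorisations are $u = \varepsilon$, $v = x$ and $u = x$, $v = \varepsilon$. In the first case $x = v$ represents a right unit, hence lies in $\GreenR_1$; in the second case $x = u$ represents a left unit, hence lies in $\GreenL_1$. Either way $x \in \GreenR_1 \cup \GreenL_1$, as required.

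I do not expect any genuine obstacle here: the statement is an essentially immediate specialisation of Theorem~\ref{thm_ddesc} to words of length one, the only ingredients being that such a word admits only the two trivial factorisations and that the empty word represents the identity (which is simultaneously a left and a right unit). The substantive content has already been absorbed into the proof of Theorem~\ref{thm_ddesc}.
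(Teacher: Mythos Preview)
Your proof is correct and essentially the same as the paper's: both deduce the result immediately from Theorem~\ref{thm_ddesc} applied to a one-letter word. The only cosmetic difference is that you invoke characterisation~(iii) (the factorisation $x = uv$ has only the two trivial instances), whereas the paper invokes characterisation~(ii) (a length-one path through $1$ in $\sgo$ must start or end at $1$); these are two phrasings of the same observation.
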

\begin{proof}
If $x$ is a generator and $x \GreenD 1$ then by Theorem~\ref{thm_ddesc} it must label some path in $S\Gamma(1)$ passing through $1$; since the label is a single letter such a path must have length
$1$, and therefore must start and/or end at $1$, which means that $x$ is $\GreenR$-related and/or $\GreenL$-related to $1$.
\end{proof}

\begin{remark} 
While it is a general and well-known fact about inverse monoids (which is implicitly proved in the third paragraph of the proof of Theorem~\ref{thm_ddesc}) that every element of $\GreenD_1$ can be decomposed in the monoid as the product of a left and a right unit, it is far more unusual and surprising that every \textit{word} representing such an element can be decomposed \textit{as a word} into words representing a left and a right unit. This behaviour is very particular to special inverse presentations, and can fail even for
non-special presentations of special inverse monoids. For example the non-standard presentation $\textrm{Inv}\langle p, q, r \mid pq = 1, qp = r \rangle$ for the
(special, bisimple) bicyclic monoid contains a generator $r$ representing an element of $\GreenD_1 \setminus (\GreenL_1 \cup \GreenR_1)$; if the theorem held then
$r$ would have to label a path through $1$, which since it is a single letter means a path starting or ending at $1$, but in this case it would represent an element of
$\GreenL_1$ or $\GreenR_1$.
\end{remark}

\begin{proposition}
Any special inverse monoid in which $\GreenR_1 = \GreenH_1$ (or $\GreenL_1 = \GreenH_1$) decomposes as the free product of a group with a free inverse monoid.
\end{proposition}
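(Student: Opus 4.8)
The plan is to use the hypothesis to show that the defining relators involve only a restricted set of generators, each of which is forced to be a unit, so that $M$ is visibly the free product of the inverse monoid presented by those generators and relators with the free inverse monoid on the remaining generators.

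First I would show that \emph{every generator occurring in a defining relator represents a unit of $M$}. If $w = 1$ is a defining relation then $\overline{w}$ is idempotent, so by Proposition~\ref{prop_greenschutz} the word $w$ labels only closed paths in Sch\"utzenberger graphs; and since $\overline{w} = 1 \in \GreenR_1$, the same proposition says $w$ labels a path in $\sgo$ starting at the vertex $1$, which is therefore a closed path based at $1$. Consequently every prefix $u$ of $w$ labels a path in $\sgo$ starting at $1$, so $\overline{u} \in \GreenR_1 = \GreenH_1$ is a unit; writing $w = a_1 \cdots a_n$ and using that $\overline{a_j} = (\overline{a_1 \cdots a_{j-1}})^{-1}\, \overline{a_1 \cdots a_j}$ is a product of two units, we conclude that each letter $a_j$ of $w$ represents a unit, and hence so does each generator occurring (possibly inverted) in $w$. (If instead $\GreenL_1 = \GreenH_1$, argue dually with suffixes of $w$, which label paths in $\sgo$ ending at $1$.) Now partition the generating set as $A = B \sqcup C$, where $B$ consists of the generators occurring in some relator and $C = A \setminus B$; then every defining relator is a word over $B^{\pm 1}$, and every element of $B$ represents a unit of $M$.

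Next I would invoke the standard fact that, because $A = B \sqcup C$ and all relators lie in $(B^{\pm 1})^*$, the presentation $\Ipres{A}{R}$ exhibits $M$ as the coproduct in the category of inverse monoids --- i.e. the inverse monoid free product --- of $G := \Ipres{B}{R}$ and the free inverse monoid $\Igen{C}$. So it only remains to check that $G$ is a group. For this, consider the retraction $\rho \colon M = G * \Igen{C} \to G$ induced, via the universal property of the free product, by the identity map on $G$ together with the unique homomorphism $\Igen{C} \to G$ sending every generator to $1$. For $b \in B$ the element $\overline{b} \in M$ is a unit, and $\rho(\overline{b})$ is the corresponding generator of $G$; since homomorphisms send units to units, every generator of $G$ is a unit of $G$. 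An inverse monoid generated by units is a group (every element is then a product of units and so is itself a unit), so $G$ is a group and $M \cong G * \Igen{C}$ is a free product of a group with a free inverse monoid.

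The content of the argument is concentrated in the first step, which is the only place the special-inverse-monoid structure and the hypothesis are used; the rest is formal. The main thing to be careful about is the coproduct decomposition: one must phrase ``free product'' as the coproduct in the variety of inverse monoids, and it is worth recalling (although not strictly needed for the bare statement) that in such a coproduct each factor embeds, so that $G$ genuinely sits inside $M$ --- indeed one can check it is precisely the group of units.
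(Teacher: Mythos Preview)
Your proof is correct and follows essentially the same route as the paper's: both show that every generator appearing in a defining relator is a unit (using that prefixes of relators represent right units, hence units by the hypothesis $\GreenR_1 = \GreenH_1$), and then split the presentation into those generators and the rest. The paper phrases the first step via the intermediate claim $\GreenJ_1 = \GreenH_1$ and a leftmost-non-unit-letter contradiction, whereas you argue directly and are slightly more explicit (via the retraction) about why $\Ipres{B}{R}$ is a group, but these are presentational rather than substantive differences.
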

\begin{proof}
Suppose $M$ is a special inverse monoid in which $\GreenR_1 = \GreenH_1$, the case $\GreenL_1 = \GreenH_1$ being dual.

We claim that in fact $\GreenJ_1 = \GreenH_1$. Indeed, suppose
not for a contradiction. Notice first that there must be a generator in $\GreenJ_1 \setminus \GreenH_1$; indeed if not then every word either contains a
generator outside $\GreenJ_1$ (in which case it does not represent an element of $\GreenJ_1$, since $M \setminus \GreenJ_1$ is an ideal) or has all generators in $\GreenH_1$ (in which
case it represents an element of $\GreenH_1$, since $\GreenH_1$ is a subgroup). Clearly in order to be in $\GreenJ_1$ this generator must appear in a relator, $r$ say. Write $r = uxv$ where $x$ is the
leftmost generator not in $\GreenH_1$. Then the factor $u$ represents a right unit, which since $\GreenR_1 = \GreenH_1$ means it represents a unit. Now in the
monoid we have $xv = u^{-1} u xv = u^{-1} 1 = u^{-1}$, so $xv$ represents a unit. But this means $x$ is right invertible, so $x \in \GreenR_1 = \GreenH_1$, giving a contradiction.

Now it is easy to see that the generating set can be split into generators in $\GreenJ_1 = \GreenH_1$, which generate the group $\GreenH_1$, and
generators not in $\GreenJ_1$ which do not appear in any relation and hence generate a free factor.
\end{proof}

As a consequence we obtain a very simple proof of the following well-known fact:

\begin{corollary}
Every finite special inverse monoid is a finite group.
\end{corollary}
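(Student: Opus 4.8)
The plan is to obtain this as an easy consequence of the preceding Proposition. The first step is to observe that in any \emph{finite} monoid every right unit is in fact a unit: if $ab = 1$ in a finite monoid $M$ then among the powers $1, a, a^2, \dots$ there must be a repetition $a^i = a^j$ with $i < j$, and multiplying on the right by $b^i$ yields $a^{j-i} = 1$, so $a$ is invertible (with inverse $a^{j-i-1}$). Specialising to a finite special inverse monoid $M$, this says precisely that $\GreenR_1 = \GreenH_1$, since $\GreenH_1 \subseteq \GreenR_1$ always holds and we have just established the reverse inclusion.

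Now apply the preceding Proposition: $M$ decomposes as a free product $G * F$ where $G$ is a group (necessarily $\GreenH_1$) and $F$ is a free inverse monoid. Since the free inverse monoid on a nonempty alphabet is already infinite, and $F$ embeds into the free product $G * F \cong M$, finiteness of $M$ forces $F$ to be the free inverse monoid on the empty alphabet, i.e.\ the trivial monoid. Hence $M = G$ is a finite group.

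I do not expect a genuine obstacle here: essentially all of the content is carried by the preceding Proposition, and the only additional ingredient is the elementary fact that one-sided invertibility implies two-sided invertibility in a finite monoid (which could alternatively just be quoted). If anything, the point worth an explicit sentence is that a nontrivial free inverse monoid cannot be a submonoid of a finite monoid — immediate, since even the one-generator free inverse monoid is infinite.
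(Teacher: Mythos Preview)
Your proof is correct and follows essentially the same route as the paper: both establish $\GreenR_1 = \GreenH_1$ in a finite monoid via the pigeonhole argument on powers (cancelling with a right inverse), then invoke the preceding Proposition and use that nontrivial free inverse monoids are infinite. The paper's version is slightly terser but the ideas match exactly.
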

\begin{proof}
It is well known that finite monoids satisfy $\GreenR_1 = \GreenH_1$. 
Indeed, given $x \in \GreenR_1$ write $xy=1$, then since the monoid is finite we have $x^i = x^{i+j}$ for some $i,j \geq 1$ which, right multiplying by $y^i$, gives $x^j=1$ for some $j \geq 1$. Hence $x \in \GreenH_1$.     
So by the above any finite special inverse monoid is free product of
a group with a free inverse monoid. Since non-trivial free inverse monoids are infinite, the free inverse monoid must be trivial and hence the given monoid
is a (necessarily finite) group.
\end{proof}

We shall need the following fact, which is established by the argument in the proof of \cite[Proposition 4.2]{Ivanov:2001kl}.
\begin{proposition}\label{prop_r1gen}
In a special inverse monoid the (non-inverse) submonoid $\GreenR_1$ [respectively $\GreenL_1$] is generated (under multiplication only) by the set of elements represented
by the proper prefixes [suffixes] of the defining relators.
\end{proposition}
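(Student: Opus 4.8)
The plan is to prove that $\GreenR_1$ coincides with the submonoid $N$ of $M$ generated under multiplication by the set $P$ of elements represented by proper prefixes of the defining relators. One inclusion is immediate: if $r = pq$ is a defining relator with $p$ a proper prefix of it, then $\overline{p}\,\overline{q} = \overline{r} = 1$, so $\overline{p}$ is a right unit; since $\GreenR_1$ is closed under multiplication, this gives $N \subseteq \GreenR_1$. The substance of the proposition is the reverse inclusion $\GreenR_1 \subseteq N$.

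To establish this I would realise the Sch\"utzenberger graph $\sgo$ via Stephen's iterative procedure, as the direct limit of a sequence of finite graphs $\Gamma_0 \subseteq \Gamma_1 \subseteq \cdots$ beginning with the single-vertex graph $\Gamma_0$ whose only vertex $\beta$ represents $1$, and passing from each $\Gamma_n$ to the next by elementary expansions and Stallings foldings. Since every defining relation has the form $r = 1$, an elementary expansion simply sews on, at some existing vertex $u$, a fresh path running from $u$ back to $u$ and labelled (reading forwards) by a defining relator $r = a_1 \cdots a_n$, its interior vertices being new; a folding identifies two vertices joined to a common vertex by a pair of edges carrying the same label.

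The core of the argument is the following invariant, proved by induction on the number of construction steps: \emph{in every $\Gamma_n$, each vertex is the endpoint of at least one path starting at $\beta$ whose label, read as a word, is a concatenation $p_1 p_2 \cdots p_k$ of proper prefixes of defining relators.} The base case holds via the empty path at $\beta$. For an expansion adding, at a vertex $u$, a loop labelled $r = a_1 \cdots a_n$: each old vertex retains its path, while a new interior vertex $u_i$ (for $1 \le i \le n - 1$) is reached by following a good path to $u$ supplied by the inductive hypothesis and then traversing the first $i$ edges of the new loop forwards, producing a label of the form $(p_1 \cdots p_k) a_1 \cdots a_i$; this is again a concatenation of proper prefixes of relators, since $a_1 \cdots a_i$ is a proper prefix of $r$. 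For a folding, the quotient map carries a good path to a good path with the same label, so the invariant survives, and it therefore passes to the limit $\sgo$. Since, by Stephen's theorem, the vertex set of $\sgo$ is exactly $\GreenR_1$, the base vertex is $1$, and a path from $1$ with label $w$ ends at the vertex $\overline{w}$, it follows that every right unit $v$ equals $\overline{p_1} \cdots \overline{p_k}$ for suitable $p_j \in P$, i.e.\ $v \in N$. This gives $\GreenR_1 \subseteq N$, completing the argument for $\GreenR_1$; the statement for $\GreenL_1$ and suffixes follows by a symmetric argument, or formally by applying the anti-automorphism $m \mapsto m^{-1}$ of $M$ and replacing each relator by its inverse word, which defines the same monoid.

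I expect the main obstacle to be the point that new vertices can always be reached by \emph{forward} traversals of the relator loops. Traversing such a loop backwards would read the inverse of a relator, whose prefixes are inverses of suffixes and in general represent neither right units nor elements of $N$; the argument genuinely relies on never needing to go backwards along a relator path to reach a newly created vertex. Beyond this, one must keep the bookkeeping around the foldings honest and make the (routine) appeals to Stephen's theorem precise — in particular to pass from the finite approximations to $\sgo$ and to identify its vertex set with $\GreenR_1$.
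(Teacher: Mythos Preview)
Your argument is correct and is essentially the standard one: the paper does not give its own proof here but defers to the argument in \cite[Proposition~4.2]{Ivanov:2001kl}, which proceeds by the same induction over Stephen's approximations to $\sgo$ that you outline. One cosmetic remark: writing $\Gamma_0 \subseteq \Gamma_1 \subseteq \cdots$ is slightly misleading since a folding step is a quotient map rather than an inclusion, but as you already frame the construction as a direct limit and your invariant is carried forward by quotient maps, this does not affect the argument.
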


We shall need an elementary lemma about bi-deterministic graphs, which is well-known to experts:
\begin{lemma}\label{lemma_uniquemap}
A morphism of connected, bi-deterministic graphs is uniquely determined by where it takes any single vertex. In particular, non-identity automorphisms of such
graphs are fixed-point free. If two connected, bi-deterministic
graphs with distinguished root vertices admit root-preserving morphisms between them in both directions, then the morphisms are isomorphisms.
\end{lemma}
\begin{proof}
Suppose $f : X \to Y$ is a morphism of connected, bi-deterministic graphs. Let $v$ be a vertex of $X$. For each other vertex $u \in X$, because $X$ is connected we
may choose a path from $v$ to $u$, say with label $w$. But now $f(u)$ must be at the end of a path labelled $w$ starting at $f(v)$; because $Y$ is bi-deterministic
there can be only one such path, and so $f(u)$ is determined by $f(v)$. In particular, if $X = Y$ and $f$ is an automorphism fixing a vertex $v$ then it must be the
identity automorphism.

Now if there are root-preserving morphisms $f : X \to Y$ and $g : Y \to X$ then the compositions $f \circ g : X \to X$ and $g \circ f : Y \to Y$ are morphisms
which agree with the identity maps on $X$ and $Y$ on their root vertices; since the identity maps are also morphisms, by the previous paragraph
$f \circ g$ and $g \circ f$ must be equal to the respective identity maps, so $f$ and $g$ are isomorphisms.
\end{proof}

We shall also need the following lemma, which is proved by a similar technique to \cite[Lemma 3.3]{GrayKambitesHClasses}.
\begin{lemma}\label{lemma_morphismfromschutz}
Let $M = \langle A \mid R \rangle$ be a special inverse monoid and suppose $\Omega$ is a rooted bi-deterministic 
$A$-labelled graph such that some word $w \in (A^{\pm 1})^*$ can be read from the root, and every defining relation of $M$ can be read around a closed
path at every vertex of $\Omega$. Then there is a morphism from $S\Gamma(w)$ to $\Omega$, taking the root to the root.
\end{lemma}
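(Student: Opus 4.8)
The plan is to reduce the statement, via Stephen's iterative procedure for constructing Sch\"utzenberger graphs, to the problem of extending a label-preserving graph morphism one elementary step at a time, and then to invoke the universal (direct limit) property of that construction. Recall that $S\Gamma(w)$ arises as the direct limit of a sequence of rooted $A$-labelled graphs $\Gamma_0 \to \Gamma_1 \to \Gamma_2 \to \cdots$, where $\Gamma_0$ is the linear automaton of $w$ (a path reading $w$, rooted at its initial vertex), and each $\Gamma_{i+1}$ is obtained from $\Gamma_i$ either by an \emph{elementary expansion} (sewing onto $\Gamma_i$, at one of its vertices, a closed path labelled by some defining relator) or by a \emph{fold} (identifying, in the Stallings sense, two edges with a common endpoint and the same label, together with their other endpoints), with the process organised so that every applicable operation is eventually performed; the limit is then deterministic and closed under relator loops. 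I would construct, by induction on $i$, label-preserving morphisms $\phi_i \colon \Gamma_i \to \Omega$, each sending root to root and each compatible with the connecting map $\Gamma_i \to \Gamma_{i+1}$.

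For the base case, since $w$ can be read from the root of $\Omega$, reading it off determines a morphism $\phi_0 \colon \Gamma_0 \to \Omega$ taking root to root (well-definedness on vertices of $\Gamma_0$ that a priori coincide, e.g.\ because of cancellation in $w$, follows from determinism of $\Omega$). For the inductive step there are two cases. If $\Gamma_{i+1}$ is obtained from $\Gamma_i$ by attaching a closed path labelled by a relator $r$ at a vertex $v$, then by hypothesis $r$ can be read around a closed path at $\phi_i(v)$ in $\Omega$, and mapping the newly attached path onto this closed path extends $\phi_i$ to the required $\phi_{i+1}$. If instead $\Gamma_{i+1}$ is obtained from $\Gamma_i$ by a fold identifying two $a$-edges issuing from (or terminating at) a common vertex $v$, then their $\phi_i$-images are $a$-edges of $\Omega$ issuing from (terminating at) $\phi_i(v)$, hence equal by determinism of $\Omega$, hence with equal other endpoints; so $\phi_i$ factors through the fold. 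The same argument handles the vertex identifications: whenever the procedure collapses two vertices $p,q$ because a relator reads from $p$ to $q$ in $\Gamma_i$, that relator reads both from $\phi_i(p)$ to $\phi_i(q)$ and around a closed path at $\phi_i(p)$ in $\Omega$, so determinism forces $\phi_i(q)=\phi_i(p)$. In every case the root is sent to the root, so passing to the direct limit yields the desired morphism $\phi \colon S\Gamma(w) \to \Omega$ with $S\Gamma(w) = \varinjlim \Gamma_i$.

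The only genuine subtlety — the step I would be most careful about — is the collapsing/folding case: one must verify that whenever Stephen's procedure is forced to merge two vertices or two edges of an approximate graph, the morphism built so far already agrees on their images in $\Omega$. This is precisely where the two hypotheses on $\Omega$ (determinism, and readability of every defining relator around a closed path at every vertex) are used together, and it is essentially the argument of \cite[Lemma 3.3]{GrayKambitesHClasses}; the remainder is routine bookkeeping about direct limits and label-preserving graph morphisms.
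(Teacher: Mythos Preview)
Your proof is correct and follows essentially the same strategy as the paper's: both arguments use that elementary expansions map forward because every relator labels a closed path at every vertex of $\Omega$, and that folds are compatible because $\Omega$ is deterministic. The only difference is organisational---the paper first builds one infinite unfolded graph $T$ (the line labelled $w$ with relator cycles attached iteratively at every vertex, no folding), defines the morphism on $T$ using its tree-like addressing, and then shows it descends to the determinisation $S\Gamma(w)$, whereas you interleave expansions and folds along Stephen's sequence and pass to the direct limit; the underlying content is identical.
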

\begin{proof}
Let $T$ be the (non-bi-deterministic) infinite graph constructed iteratively by starting with a line labelled $w$, and adding a cycle labelled by $r$ at every vertex for every $r \in R$, but not performing any edge folding. 
We view each of these cycles as oriented in such a way that the word $r$ is the label of the path given by reading the cycle clockwise.    
By a \emph{proper subpath of a cycle of $T$} we mean a path $\pi$ with initial vertex being the vertex at which the cycle was attached in the construction of $T$, and such that $\pi$ is a simple path which traverses the cycle clockwise but does not visit every vertex of the cycle, i.e. the end vertex of $\pi$ is not equal to the start vertex of $\pi$. Note that if $r \in R$ is the label of a cycle in $T$ then any proper subpath of this cycle is labelled by a proper prefix of the word $r$.         
From the construction it follows that for every vertex $u$ of $T$ there is a unique sequence $(\pi_0, \pi_1, \pi_2, \ldots, \pi_k)$ where $\pi_0$ is a simple path starting at the root and traversing
part of the line labelled $w$, each $\pi_i$ for $i \geq 1$ is a proper subpath of a cycle and $\pi_0 \pi_1 \ldots \pi_k$ is a path from the root of $T$ to $u$.      
We define a map from the vertex set of $T$ to vertices in $\Omega$ where the vertex $u$ with corresponding sequence $(\pi_0, \pi_2, \ldots, \pi_k)$ of proper subpaths of cycles maps to the vertex in $\Omega$ obtained by following the path labelled by $p_0 p_1 \ldots p_k$ starting at the vertex $w$ of $\Omega$, where $p_i$ is the label of the path $\pi_i$ for $0 \leq i \leq k$.   
This gives a well-defined (by uniqueness of the sequences of proper subpaths of cycles) map from the vertices of $T$ to the vertices of $\Omega$.   
As a consequence of the assumptions that $\Omega$ is bi-deterministic and every relator from $R$ can be read from every vertex in $\Omega$, this map extends uniquely to a morphism of graphs which maps edges of $T$ to the edges of $\Omega$. 
Let us use $\phi$ to denote this graph morphism from $T$ to $\Omega$.  

\begin{sloppypar}
As explained in 
Remark~\ref{rem:TreeOfBalls}, 
it follows from results of Stephen that $S\Gamma(w)$ is obtained by bi-determinising $T$. 
We claim that $\phi$ induces a well-defined graph morphism from $S\Gamma(w)$ to $\Omega$.       
To see this note that two vertices $v$ and $u$ of $T$ are identified in $S\Gamma(w)$ if and only if there is a path in $T$ between these vertices labelled by a word that freely reduces to the empty word in the free group. Since $\phi$ is a morphism it follows that there is a path in $\Omega$ between $\phi(v)$ and $\phi(u)$ labelled by the same word that freely reduces to the empty word in the free group. Since the graph $\Omega$ is bi-deterministic it follows that $\phi(v) = \phi(u)$. Hence $\phi$ induces a well-defined map from the vertices of $S\Gamma(w)$ to the vertices of $\Omega$.          
Two edges $e$ and $f$ of $T$ are identified in $S\Gamma(w)$ if and only if they have the same label, say $a \in A$, and their start vertices $v$ and $u$ are identified in $S\Gamma(1)$. 
But we have already seen that this means that $\phi(v) = \phi(u)$ which, since $\Omega$ is bi-deterministic means that both $e$ and $f$ must be mapped to the unique edge in $\Omega$ with start vertex $\phi(v) = \phi(u)$ and labelled by $a$.                
This shows that $\phi$ induces a well defined map from the edges of $S\Gamma(w)$ to the edges of $\Omega$.    
\end{sloppypar}

It remains to verify that $\phi$ induces a morphism of graphs from $S\Gamma(w)$ to $\Omega$.    
Let $e$ be an edge in $S\Gamma(w)$. Choose an edge $f$ in $T$ such that $f$ is equal to $e$ when $T$ is bi-determinised, that is, $f$ is a member of the equivalence class of edges that represented $e$. Since $\phi$ is a morphism from $T$ to $\Omega$ it follows that the start vertex of $f$ in $T$ maps to the start vertex of $\phi(f)$ in $\Omega$, and the end vertex of $f$ in $T$ maps to the end vertex of $\phi(f)$ in $\Omega$. 
But by definition $\phi(e) = \phi(f)$ and $\phi$ maps the start vertex $e$ to the same place as the start vertex of $f$, and similarly for the end vertices. 
It follows that $\phi$ induces a morphism of graphs from $S\Gamma(w)$ to $\Omega$. 
\end{proof}

\section{$\mathcal{R}_1$-injectivity}\label{sec_roi}

In this section we define a new property called \textit{$\GreenR_1$-injectivity} which is weaker than $E$-unitarity, establish some of its basic properties and give examples to show that it encompasses many special inverse monoids of interest which are not $E$-unitary.

\begin{definition}
We say that an inverse monoid is \textit{$\GreenR_1$-injective} if the morphism to the maximal group image is injective when restricted to the $\GreenR$-class of the identity. 
\end{definition}

The following result lists a number of equivalent characterisations; as well as being useful later, we hope they help to convince the reader that the definition is natural.
\begin{proposition}\label{roiequiv}
Let $M$ be an inverse monoid generated by a set $X$. Then the following are equivalent:
\begin{itemize}
\item[(i)] $M$ is $\GreenR_1$-injective;
\item[(ii)] $\sgo$ naturally embeds in the Cayley graph of the maximal group image $M / \sigma$;
\item[(iii)] for some $x \in \GreenD_1$, the morphism from $M$ to $M / \sigma$ is injective when restricted to $\GreenR_x$;
\item[(iv)] for every $y \in \GreenD_1$, the morphism from $M$ to $M / \sigma$ is injective when restricted to $\GreenR_y$;
\item[(v)] $\sigma^{-1}(1) \cap \GreenD_1 = E(M) \cap \GreenD_1$;
\item[(vi)] every path in $\sgo$ labelled by a word representing the identity in $M / \sigma$ is closed;
\item[(vii)] the morphism to the maximal group image is injective when restricted to the $\GreenL$-class of the identity;
\item[(viii)] for some $x \in \GreenD_1$, the morphism from $M$ to $M / \sigma$ is injective when restricted to $L_x$;
\item[(ix)] for every $y \in \GreenD_1$, the morphism from $M$ to $M / \sigma$ is injective when restricted to $L_y$.
\end{itemize}
\end{proposition}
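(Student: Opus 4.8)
The plan is to derive all eight other statements from (i), using three reductions: the identification of $\sgo$ with a deterministic labelled graph (for (ii) and (vi)); Green's Lemma, to move between $\GreenR_1$ and an arbitrary $\GreenR$-class lying in $\GreenD_1$ (for (iii) and (iv)); and the inversion anti-automorphism $m\mapsto m'$ of $M$, to convert each statement about $\GreenR$ into its mirror about $\GreenL$ (for (vii)--(ix)). Throughout, write $\theta\colon M\to M/\sigma$ for the canonical morphism; I will use without comment that $E(M)\subseteq\theta^{-1}(1)$, that $r\in\GreenR_1$ implies $rr'=1$ and $r'\in\GreenL_1$ (and the left-right dual), and that a product of an element of $\GreenL_1$ with an element of $\GreenR_1$ always lies in $\GreenD_1$ (a one-line computation, the same one that appears in the proof of Theorem~\ref{thm_ddesc} and which does not use speciality).

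For (i)$\Leftrightarrow$(ii)$\Leftrightarrow$(vi): the assignment $r\mapsto\theta(r)$ on vertices extends to a label-preserving graph morphism from $\sgo$ into the Cayley graph of $M/\sigma$, and since both graphs are deterministic this morphism is an embedding exactly when it is injective on vertices, i.e.\ exactly when $\theta$ is injective on $\GreenR_1$; that is (i)$\Leftrightarrow$(ii). For (vi), I would use that a path in $\sgo$ from $r$ to $s$ reading a word $w$ forces $s=r\overline w$ in $M$: if $\overline w\in\theta^{-1}(1)$ then $\theta(r)=\theta(s)$, so (i) makes the path closed, giving (i)$\Rightarrow$(vi); conversely, given $r,s\in\GreenR_1$ with $\theta(r)=\theta(s)$, pick words $u,v$ representing them (which, by Proposition~\ref{prop_greenschutz}, read paths from $1$ in $\sgo$), so that $u'v$ reads a path from $r$ to $s$ (run the $u$-path backwards, then the $v$-path) whose value maps to $\theta(r)^{-1}\theta(s)=1$; then (vi) forces $r=s$.

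For (i)$\Leftrightarrow$(iii)$\Leftrightarrow$(iv): since $1\in\GreenD_1$, (iv) gives (i) and (i) gives (iii) trivially, so everything reduces to showing that for each $x\in\GreenD_1$, $\theta|_{\GreenR_x}$ is injective if and only if $\theta|_{\GreenR_1}$ is. Choosing $y$ with $x\,\GreenR\,y\,\GreenL\,1$, we have $\GreenR_x=\GreenR_y$ and $y'y=1$, and Green's Lemma provides mutually inverse bijections $z\mapsto yz\colon\GreenR_1\to\GreenR_y$ and $z\mapsto y'z\colon\GreenR_y\to\GreenR_1$; since $\theta(yz)=\theta(y)\theta(z)$ and $\theta(y)$ is a unit of the group $M/\sigma$, these bijections preserve and reflect $\theta$-equality, so injectivity of $\theta$ on one of the two $\GreenR$-classes is equivalent to injectivity on the other. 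I expect this transfer step to be the crux of the whole proposition: the care needed is that $z\mapsto yz$ is not a homomorphism, so one must use the bijectivity from Green's Lemma together with the invertibility of $\theta(y)$ in $M/\sigma$ (and the identity $yy'w=w$ for $w\in\GreenR_y$, valid because $ww'=yy'$).

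For (iv)$\Leftrightarrow$(v) and the $\GreenL$-statements: one inclusion in (v) is automatic since $E(M)\subseteq\theta^{-1}(1)$; for the other, if (iv) holds and $m\in\theta^{-1}(1)\cap\GreenD_1$ then $m$ and the idempotent $mm'$ lie in the same $\GreenR$-class with the same $\theta$-image, so $m=mm'\in E(M)$. Conversely, to get (v)$\Rightarrow$(i), take $r,s\in\GreenR_1$ with $\theta(r)=\theta(s)$; then $e=r's$ lies in $\GreenD_1$ (a left unit times a right unit) and maps to $1$, so by (v) it is idempotent, whence $re=rr's=s$ and, as $e=e'=s'r$, also $se=ss'r=r$, so $r=se=(re)e=re=s$. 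Finally, the anti-automorphism $m\mapsto m'$ fixes $1$, carries $\GreenR_a$ onto $\GreenL_{a'}$, and satisfies $\theta(m')=\theta(m)'$, so $\theta$ is injective on $\GreenR_a$ if and only if on $\GreenL_{a'}$; letting $a$ run over $\GreenR_1$, respectively over $\GreenD_1$, makes $a'$ run over $\GreenL_1$, respectively over $\GreenD_1$, which turns (i), (iii), (iv) into (vii), (viii), (ix) respectively and finishes the proof.
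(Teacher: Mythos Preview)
Your proof is correct and follows essentially the same approach as the paper's. Both arguments use the graph morphism to the Cayley graph for (ii) and (vi), Green's Lemma to transfer injectivity between $\GreenR$-classes in $\GreenD_1$ for (iii) and (iv), the idempotent trick (showing $r's$ is idempotent) for (v)$\Rightarrow$(i), and left/right duality for (vii)--(ix). The only cosmetic differences are that you route (iii) and (iv) through $\GreenR_1$ rather than directly between two arbitrary $\GreenR$-classes, your computation for (v)$\Rightarrow$(i) (deriving $re=s$ and $se=r$ and then $r=se=(re)e=re=s$) is organised slightly differently from the paper's (which derives $yx'=1$), and you make the duality explicit via the anti-automorphism $m\mapsto m'$ where the paper simply appeals to the self-duality of condition (v).
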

\begin{proof}
The equivalence of (i) and (ii) and the fact that (i) implies (iii) are immediate from the definitions.

Suppose (iii) holds for some $x \in \GreenD_1$ and let $y \in \GreenD_1$. Then $x \GreenD y$ so there exists $z$ with $x \GreenR z \GreenL y$. In particular we may write
$z = qy$ for some $q \in M$, and by Green's lemma
there is a bijection
$$\lambda_q : \GreenR_y \to \GreenR_{qy} = \GreenR_z = \GreenR_x, \ \ i \mapsto qi.$$
Now if $a, b \in \GreenD_y$
with $\sigma(a) = \sigma(b)$ then $\sigma(qa) = \sigma(qb)$ where $qa, qb \in \GreenR_x$, so $qa = qb$, which since $\lambda_q$ is a bijection means that $a = b$.
Thus, (iv) holds.

Now suppose (iv) holds. If $e \in E(M)$ then clearly $\sigma(e) = 1$. Now if $s \in \GreenD_1$ with $\sigma(s) = 1$ then $s \GreenR e$ for some idempotent
$e$, but now $\sigma(e) = 1$ and by assumption $\sigma$ is injective on $\GreenR_s$ so we must have $s = e$ and $s$ is idempotent. Thus, (v) holds.

Suppose (v) holds, and let $x, y \in \GreenR_1$ be such that $\sigma(x) = \sigma(y)$. Then we have $x'y \GreenR x' \GreenL 1$ so that 
 $x'y \GreenD 1$. Moreover, $\sigma(x'y) = \sigma(x)' \sigma(y) = \sigma(x)' \sigma(x) = 1$. Thus we may deduce from (v) that $x'y$ is idempotent. Now we have
$$1 = (xx')(yy') = x(x'y)y' = x(x'y)^2y' = (xx')(yx')(yy') = yx'$$
from which it follows that 
$x' = x'yx'$ and $y = yx' y$ which implies that the inverse of $x'$ is $y$ 
and so 
$x=(x')'=y$. Thus, (i) holds.

If (ii) holds then (vi) follows from the fact that paths in a group Cayley graph labelled by words representing the identity are necessarily closed.

Now suppose (vi) holds and let $x, y \in \GreenR_1$ with $\sigma(x) = \sigma(y)$. Choose words $w_x, w_y$ representing $x$ and $y$ respectively. Then by
Proposition~\ref{prop_greenschutz} there are
paths in $\sgo$ starting at $1$ labelled $w_x$ and $w_y$, so there is a path in $\sgo$ from vertex $x$ to vertex $y$ labelled $w_x' w_y$. Clearly in $M / \sigma$ the word $w_x' w_y$ represents $\sigma(x'y) = \sigma(x)' \sigma(y) = 1$, so by (vi) we have that the path from $x$ to $y$ is closed, which must mean $x=y$. Thus, (i) holds.

The equivalence of (i) and (vii) follows from the facts that they are left/right dual, and that condition (v) and the hypotheses are left/right symmetric. The equivalence
of (vii), (viii) and (ix) is then dual to the equivalence of (i), (ii) and (iii)
\end{proof}

\begin{remark}
There are also, of course, further equivalent conditions which are left/right duals to conditions (ii) and (vi). We omit these as stating them would first require the notion of a left Sch\"utzenberger graph, for which we have no further need here.
\end{remark}

\begin{remark}
The equivalence of conditions (i), (iii) and (iv) in Proposition~\ref{roiequiv} can also be deduced from the fact that Sch\"utzenberger graphs of $\GreenR$-classes in the same
$\GreenD$-class are isomorphic \cite[Theorem~3.4(a)]{Stephen:1990ss} together with the fact that group Cayley graphs are homogeneous. 
\end{remark}

\begin{remark}
Notwithstanding the equivalent characterisations given by Proposition~\ref{roiequiv}, we do not expect $\GreenR_1$-injectivity to be a useful or interesting property for inverse monoids in
general, since it gives information only about the $\GreenD$-class
of $1$, and there is no reason to suppose that this has any influence on the wider structure of the monoid. For
example, the condition would be trivially satisfied in any inverse
monoid whose identity is adjoined. But in special inverse monoids,
where the structure of the whole monoid is more strongly influenced by the right and left units, it seems
to be a very powerful property, as we shall see later.
\end{remark}

In view of condition (v) in Proposition~\ref{roiequiv} it is natural to ask if $\GreenR_1$-injectivity is also equivalent to the condition $\sigma^{-1}(1) \cap \GreenR_1 = E(M) \cap \GreenR_1$, in other words $\sigma^{-1}(1) \cap \GreenR_1 = \lbrace 1 \rbrace$. In fact while this condition is self-evidently necessary for $\GreenR_1$-injectivity, it is not sufficient even in special inverse monoids, as the following example shows:
\begin{example}\label{ExGrab4}
Consider the special inverse monoid
$$\langle a, b, c, d \mid acb = adb = cc' = dd' = 1 \rangle.$$
By Proposition~\ref{prop_r1gen} the right units are generated by the proper prefixes of the relators, which since $ac = ad$ in the monoid means by the set $X = \lbrace a, ac, c, d \rbrace$.
The maximal group image is the group with the same presentation, which is equivalent as a group presentation to $\langle a,b,c,d \mid d = c, b = (ac)^{-1} \rangle$, in other words, a free group generated by $a$ and $c$ (with $d$ mapping to $c$ and $b$ to $(ac)^{-1}$). Clearly no positive word over the set $X$ represents the identity in this group, so no non-idempotent right unit of the monoid maps to $1$ in the maximal group image, in other words, we have 
$\sigma^{-1}(1) \cap \GreenR_1 = E(M) \cap \GreenR_1$. On the other hand, the right units $c$ and $d$ get identified, but it can be seen (for example by constructing $S\Gamma(1)$; see Figure~\ref{FigGrab4}) that they are distinct in the monoid, so the monoid cannot be \roi.
Note that the element $c'd$ is a
non-idempotent in $\GreenD_1$ which maps to $1$ in the maximal group image, witnessing the failure of condition (v) in Proposition~\ref{roiequiv}.
\end{example}

%
%
\begin{figure}
\begin{center}
\begin{tikzpicture}
\tikzstyle{lightnode}=[circle, draw, fill=black!20,
                        inner sep=0pt, minimum width=4pt]
%
\draw[dashed] (6, 0) -- (6.5,0);
\draw[dashed] (-0.5, -2) -- (-0.75,-3);
\draw[dashed] (-0.5, -2) -- (-0.25,-3);
\draw[dashed] (0.5, -2) -- (0.25,-3);
\draw[dashed] (0.5, -2) -- (0.75,-3);
\draw[dashed] (-0.5, -2) -- (-0.5+0.5, -2);
\draw[dashed] (0.5, -2) -- (0.5+0.5, -2); 
\draw[dashed] (-0.5+3.5,   -2+1) --    (-0.75+3.5,  -3+1);
\draw[dashed] (-0.5+3.5,   -2+1) --    (-0.25+3.5,  -3+1);
\draw[dashed] (-0.5+3.5,   -2+1) -- (-0.5+0.5+3.5,  -2+1);
\draw[dashed]  (0.5+2.5+3, -2+1) --     (0.25+2.5+3,-3+1);
\draw[dashed]  (0.5+2.5+3, -2+1) --     (0.75+2.5+3,-3+1);
\draw[dashed]  (0.5+2.5+3, -2+1) --  (0.5+0.5+2.5+3,-2+1); 
\node[lightnode] (A) at (0, 0) {};
\node[lightnode] (B) at (3, 0) {};
\node[lightnode] (B2) at (3, -1) {};
\node[lightnode] (C) at (6, 0) {};
\node[lightnode] (C2) at (6, -1) {};
\node[lightnode] (E) at (-0.5, -2) {};
\node[lightnode] (F) at (0.5, -2) {};
\draw[->-=0.6, thick] (A) -- (E) node[midway,left] {$c$};
\draw[->-=0.6, thick] (A) -- (F) node[midway,right] {$d$};
\draw[->-=0.6, thick] (A) -- (B) node[midway,above left] {$a$};
\draw[->-=0.6, thick] (B) -- (C) node[midway,above left] {$a$};
\draw[->-=0.6, thick] (B2) -- (A) node[midway,below left] {$b$};
\draw[->-=0.6, thick] (C2) -- (B) node[midway,below left] {$b$};
\draw [->-=0.6, thick, out=-45,in=45,looseness=0.75] (C) to node[right]{$d$}  (C2);
\draw [->-=0.6, thick, out=-135,in=135,looseness=0.75] (C) to node[left]{$c$}  (C2);
\draw [->-=0.6, thick, out=-45,in=45,looseness=0.75] (B) to node[right]{$d$}  (B2);
\draw [->-=0.6, thick, out=-135,in=135,looseness=0.75] (B) to node[left]{$c$}  (B2);
\end{tikzpicture}
\end{center}
\caption{
An illustration of $S\Gamma(1)$ for the inverse monoid   
$\langle a, b, c, d \mid acb = adb = cc' = dd' = 1 \rangle$ considered in Example~\ref{ExGrab4}. 
The full graph is obtained by repeatedly gluing this building block freely at every vertex. 
}\label{FigGrab4}
\end{figure}

One might also ask if $\GreenR_1$-injectivity is equivalent to the stronger (than condition (v) in Proposition~\ref{roiequiv}) condition that $\sigma^{-1}(1) \cap \GreenJ_1 = E(M) \cap \GreenJ_1$; we shall see in Example~\ref{roiexample} below that this is not the case.

\begin{proposition}\label{eunitaryroi}
Every $E$-unitary inverse monoid is \roi. 
\end{proposition}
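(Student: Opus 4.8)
The proof is short; the main point is to match $E$-unitarity to condition (v) of Proposition~\ref{roiequiv}. Recall that an inverse monoid $M$ is $E$-unitary if and only if $\sigma^{-1}(1) = E(M)$, where $\sigma \colon M \to M/\sigma$ denotes the morphism to the maximal group image; equivalently, the only elements of $M$ mapping to the identity of $M/\sigma$ are the idempotents. This is the characterisation of $E$-unitarity I would take as my starting point.

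Given this, I would simply intersect both sides of the equality $\sigma^{-1}(1) = E(M)$ with the $\GreenD$-class of the identity, obtaining $\sigma^{-1}(1) \cap \GreenD_1 = E(M) \cap \GreenD_1$. This is precisely condition (v) of Proposition~\ref{roiequiv}, which that proposition shows to be equivalent to $\GreenR_1$-injectivity; hence $M$ is \roi.

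If one prefers an argument not routed through Proposition~\ref{roiequiv}, the same conclusion follows by reproducing the computation used there to pass from (v) to (i): given $x, y \in \GreenR_1$ with $\sigma(x) = \sigma(y)$ one has $\sigma(x'y) = 1$, so $E$-unitarity forces $x'y \in E(M)$, and then expanding $1 = (xx')(yy')$ using $x'y = (x'y)^2$ and $xx' = yy' = 1$ yields $x = y$. I do not expect any genuine obstacle here: once the right characterisation of $E$-unitarity is in hand, the statement is essentially immediate, the only mild care needed being to keep the various Green's-relation identities straight in the self-contained version.
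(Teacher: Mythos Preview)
Your proof is correct and essentially identical to the paper's: both invoke the characterisation of $E$-unitarity as $\sigma^{-1}(1) = E(M)$ and observe that this immediately yields condition (v) of Proposition~\ref{roiequiv}. Your optional self-contained version simply reproduces the (v)$\Rightarrow$(i) step of that proposition, which is also fine.
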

\begin{proof}
It is well-known (indeed, sometimes even taken as a definition) that an inverse monoid (or semigroup) is E-unitary if and only if the pre-image of the identity 
in the maximal group image is exactly the set of
idempotents \cite[Proposition~5.9.1]{Howie95}, and it follows immediately that every E-unitary monoid satisfies condition (v) of Proposition~\ref{roiequiv}.
\end{proof}

The converse of Proposition~\ref{eunitaryroi} is very far from being true, even in very restricted cases.  The following examples show that it can fail even for
positive, special, finite presentations, and for one-relator special presentations.

\begin{example}
The inverse monoid $\langle a,b,c,d \mid acb = adb = 1 \rangle$
is shown in \cite[Section~3]{Ivanov:2001kl} not to be $E$-unitary but is \roi. Indeed, the right units are just the submonoid generated by $a$ and $ac$ (since 
by Proposition~\ref{prop_r1gen} the right units in any special inverse monoid are generated by the proper prefixes of the relators, and it follows from the presentation that $ac = ad$ in the monoid). The maximal group image is the group with the same presentation, which is equivalent as a group presentation to $\langle a,b,c,d \mid c = d, b = ac^{-1} \rangle$, in other words, a free group generated by $a$ and $c$ (with $d$ mapping to $c$ and $b$ to $ac^{-1}$). Since $a$ and $ac$ are easily seen to generate a free monoid inside
this free group, distinct right units in the monoid must map to distinct elements of the maximal group image.
\end{example}

\begin{example}\label{roiexample}
Similarly, the one-relator inverse monoid $\langle x, y \mid xyx' = 1 \rangle$ (a homomorphic image of that in the previous example with $a$, $b$, $c$ and $d$ mapping
respectively to $x$, $x'$, $y$ and $y$) is \roi\ but not E-unitary. In this case the right units are just $1$ and the
positive powers of $x$ (because $xy = x$ in the monoid), and the presentation as a group presentation is equivalent to $\langle x, y \mid y=1 \rangle$, so the maximal group image is an infinite cyclic group generated by $x$ with $y$ mapping to $1$. Again, distinct right units in the monoid map to distinct elements of the maximal group image, and so the monoid is \roi. On the other hand, $y$ is not idempotent in the monoid (as can be seen be constructing $S\Gamma(y)$ and applying the criterion for
idempotency given by Proposition~\ref{prop_greenschutz}) but maps to $1$ in the maximal group image, so
the monoid is not $E$-unitary. Note also that $y \GreenJ 1$, so this example shows that $\GreenR_1$-injectivity is strictly weaker than the condition
$\sigma^{-1}(1) \cap \GreenJ_1 = E(M) \cap \GreenJ_1$.
Finally, notice that in this example the Sch\"utzenberger graph $\sgo$ does not embed into the Cayley graph of the maximal group image as a \textit{full} subgraph: indeed the Cayley graph contains a loop at $1$ labelled $y$, while $\sgo$ does not; see Figure~\ref{FigWorth}.  
\end{example}

\begin{figure}
\begin{center}
\begin{tikzpicture}[scale=0.8]
\tikzstyle{lightnode}=[circle, draw, fill=black!20,
                        inner sep=0pt, minimum width=4pt]
%
\node[lightnode] (A) at (0, 0) {};
\node[lightnode] (B) at (2, 0) {};
\node[lightnode] (C) at (4, 0) {};
\node[lightnode] (D) at (6, 0) {};
\node            (E) at (8, 0) {};
\draw[->-=0.52,shorten <=0pt,shorten >=0pt, thick](B.center)arc(-180-90:180-90:0.4) node[midway,below left] {$y$};
\draw[->-=0.52,shorten <=0pt,shorten >=0pt, thick](C.center)arc(-180-90:180-90:0.4) node[midway,below left] {$y$};
\draw[->-=0.52,shorten <=0pt,shorten >=0pt, thick](D.center)arc(-180-90:180-90:0.4) node[midway,below left] {$y$};
\node[lightnode] (B') at (2, 0) {};
\node[lightnode] (C') at (4, 0) {};
\node[lightnode] (D') at (6, 0) {};
\draw[->-=0.6, thick] (A) -- (B) node[midway,below ] {$x$};
\draw[->-=0.6, thick] (B) -- (C) node[midway,below ] {$x$};
\draw[->-=0.6, thick] (C) -- (D) node[midway,below ] {$x$};
\draw[dashed] (D) -- (E);
\end{tikzpicture}
\end{center}
\caption{\label{FigWorth}
The Sch\"utzenberger graph $S\Gamma(1)$ of the inverse monoid $\langle x, y \mid xyx' = 1 \rangle$ considered in Example~\ref{roiexample}.  
}
\end{figure}

The converse of Proposition~\ref{eunitaryroi} \textbf{does} hold for cyclically reduced (in particular for positive) one-relator special inverse
monoids for the trivial reason that these are known \cite{Ivanov:2001kl} \textbf{all} to be E-unitary!

While the above examples show that $\GreenR_1$-injectivity is quite common, there are also many special inverse monoids which are not $\GreenR_1$-injective. Indeed, the following examples show that even a one-relator special inverse monoid can fail to be \roi, even if the relator is a reduced (but not cyclically reduced, since this is
known to imply $E$-unitarity \cite{Ivanov:2001kl}) word.
\begin{example}\label{example_yYxyX}
The inverse monoid $\langle x, y \mid yy' x y x' = 1\rangle$ is not \roi.
Indeed, the element $y$ is right invertible (since it is a prefix of the defining relator), is easily seen to map to the identity in the maximal group image, but is not equal to $1$ in the monoid. The latter point can be seen in $S\Gamma(1)$ (see Figure~\ref{fig3})
where $y$ labels both a loop and a non-loop edge. The fact that it labels a loop means it must map to $1$ in the maximal group image, and the fact it labels a non-loop edge means condition (iv) of Proposition~\ref{roiequiv} fails. 
\end{example}

\begin{figure}\label{fig3}
\begin{center}
\begin{tikzpicture}[scale=0.8]
\tikzstyle{lightnode}=[circle, draw, fill=black!20,
                        inner sep=0pt, minimum width=4pt]
%
\node[lightnode] (A2) at (0, 2) {};
\node[lightnode]             (A3) at (0, 4) {};
\node            (A4) at (1, 4) {};
\node            (A5) at (0, 5) {};
\draw[->-=0.6, thick] (A) -- (A2) node[midway,left] {$y$};
\draw[->-=0.6, thick] (A2) -- (A3) node[midway,left] {$y$};
\draw[dashed] (A3) -- (A4);
\draw[dashed] (A3) -- (A5);
\node[lightnode] (a) at (0, 2) {};
\node[lightnode] (b) at (2, 2) {};
\node[lightnode] (c) at (4, 2) {};
\node[lightnode] (d) at (6, 2) {};
\node            (e) at (8, 2) {};
\draw[->-=0.52,shorten <=0pt,shorten >=0pt, thick](b.center)arc(-180-90:180-90:0.4) node[midway,below left] {$y$};
\draw[->-=0.52,shorten <=0pt,shorten >=0pt, thick](c.center)arc(-180-90:180-90:0.4) node[midway,below left] {$y$};
\draw[->-=0.52,shorten <=0pt,shorten >=0pt, thick](d.center)arc(-180-90:180-90:0.4) node[midway,below left] {$y$};
\node[lightnode] (b') at (2, 2) {};
\node[lightnode] (c') at (4, 2) {};
\node[lightnode] (d') at (6, 2) {};
\draw[->-=0.6, thick] (a) -- (b) node[midway,below ] {$x$};
\draw[->-=0.6, thick] (b) -- (c) node[midway,below ] {$x$};
\draw[->-=0.6, thick] (c) -- (d) node[midway,below ] {$x$};
\draw[dashed] (d) -- (e);
\node[lightnode] (A) at (0, 0) {};
\node[lightnode] (B) at (2, 0) {};
\node[lightnode] (C) at (4, 0) {};
\node[lightnode] (D) at (6, 0) {};
\node            (E) at (8, 0) {};
\draw[->-=0.52,shorten <=0pt,shorten >=0pt, thick](B.center)arc(-180-90:180-90:0.4) node[midway,below left] {$y$};
\draw[->-=0.52,shorten <=0pt,shorten >=0pt, thick](C.center)arc(-180-90:180-90:0.4) node[midway,below left] {$y$};
\draw[->-=0.52,shorten <=0pt,shorten >=0pt, thick](D.center)arc(-180-90:180-90:0.4) node[midway,below left] {$y$};
\node[lightnode] (B') at (2, 0) {};
\node[lightnode] (C') at (4, 0) {};
\node[lightnode] (D') at (6, 0) {};
\draw[->-=0.6, thick] (A) -- (B) node[midway,below ] {$x$};
\draw[->-=0.6, thick] (B) -- (C) node[midway,below ] {$x$};
\draw[->-=0.6, thick] (C) -- (D) node[midway,below ] {$x$};
\draw[dashed] (D) -- (E);
\end{tikzpicture}
\end{center}
\caption{
The Sch\"utzenberger graph $S\Gamma(1)$ of the inverse monoid $\langle x, y \mid yy' x y x' = 1\rangle$ considered in Example~\ref{example_yYxyX}. }
\end{figure}

\begin{example}\label{example_xyxyXYX}
The inverse monoid $\langle x, y \mid (xyx) y (x' y' x') = 1\rangle$ is not \roi, although the relator is a reduced word (but of course not cyclically reduced, which by
\cite{Ivanov:2001kl} would imply the monoid was $E$-unitary). Indeed, constructing $S\Gamma(1)$ (see Figure~4) we see that $y$ once again labels both loop and non-loop edges, so again $y$ represents the identity in the maximal group image and condition (iv) of Proposition~\ref{roiequiv} fails. 
\end{example}

\begin{figure}\label{fig4}
\begin{center}
\begin{tikzpicture}[scale=0.8]

\tikzstyle{lightnode}=[circle, draw, fill=black!20,
                        inner sep=0pt, minimum width=4pt]
%
\node[lightnode] (A1) at (0, -2) {};
\node[lightnode] (A2) at (0, 2) {};
\node            (A3) at (0, 3) {};
\draw[->-=0.6, thick] (A1) -- (A) node[midway,left] {$x$};
\draw[->-=0.6, thick] (A) -- (A2) node[midway,left] {$x$};
\draw[dashed] (A2) -- (A3);
\node[lightnode] (a) at (0, 2) {};
\node[lightnode] (b) at (2, 2) {};
\node[lightnode] (c) at (4, 2) {};
\node[lightnode] (d) at (6, 2) {};
\node            (e) at (8, 2) {};
\draw[->-=0.52,shorten <=0pt,shorten >=0pt, thick](c.center)arc(-180-90:180-90:0.4) node[midway,below left] {$y$};
\draw[->-=0.52,shorten <=0pt,shorten >=0pt, thick](d.center)arc(-180-90:180-90:0.4) node[midway,below left] {$y$};
\node[lightnode] (b') at (2, 2) {};
\node[lightnode] (c') at (4, 2) {};
\node[lightnode] (d') at (6, 2) {};
\draw[->-=0.6, thick] (a) -- (b) node[midway,below left] {$y$};
\draw[->-=0.6, thick] (b) -- (c) node[midway,below left] {$x$};
\draw[->-=0.6, thick] (c) -- (d) node[midway,below left] {$x$};
\draw[dashed] (d) -- (e);
\node[lightnode] (A) at (0, 0) {};
\node[lightnode] (B) at (2, 0) {};
\node[lightnode] (C) at (4, 0) {};
\node[lightnode] (D) at (6, 0) {};
\node            (E) at (8, 0) {};
\draw[->-=0.52,shorten <=0pt,shorten >=0pt, thick](C.center)arc(-180-90:180-90:0.4) node[midway,below left] {$y$};
\draw[->-=0.52,shorten <=0pt,shorten >=0pt, thick](D.center)arc(-180-90:180-90:0.4) node[midway,below left] {$y$};
\node[lightnode] (B') at (2, 0) {};
\node[lightnode] (C') at (4, 0) {};
\node[lightnode] (D') at (6, 0) {};
\draw[->-=0.6, thick] (A) -- (B) node[midway,below left] {$y$};
\draw[->-=0.6, thick] (B) -- (C) node[midway,below left] {$x$};
\draw[->-=0.6, thick] (C) -- (D) node[midway,below left] {$x$};
\draw[dashed] (D) -- (E);
\end{tikzpicture}
\end{center}
\caption{
The Sch\"utzenberger graph $S\Gamma(1)$ of the inverse monoid $\langle x, y \mid (xyx) y (x' y' x') = 1\rangle$ considered in Example~\ref{example_xyxyXYX}.}
\end{figure}

\section{Sch\"utzenberger Graphs and Blocks}\label{sec_blocks}

Throughout this section $M$ will be an \roi\ special inverse monoid generated by a (not necessarily finite) set $X$. Our aim is to show that under this assumption, the different Sch\"utzenberger graphs of $M$ admit a decomposition into (not necessarily disjoint) copies of
$\sgo$. The existence of such a decomposition stems in large part from the following elementary result, which extends to the \roi\ case an observation made in the $E$-unitary case by Stephen \cite[Theorem~3.8]{Stephen:1990ss}.

\begin{lemma}\label{sgoembeds}
Let $M$ be an \roi\ inverse monoid generated by a set $X$, and
$w$ a word. Then for every vertex $v$ of $S \Gamma(w)$ there is
an injective morphism from  $\sgo$ to $\sgw$ which takes $1$ to $v$.
\end{lemma}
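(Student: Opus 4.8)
The plan is to exhibit the embedding explicitly, as \emph{left translation by the element sitting at the vertex $v$}. Recall that the vertex set of $\sgw$ may be identified with the $\GreenR$-class $R_{\overline{w}}$, rooted at $\overline{w}$, with an $a$-labelled edge running from $x$ to $x\overline{a}$ (for $a \in X^{\pm 1}$) precisely when both $x$ and $x\overline{a}$ lie in $R_{\overline{w}}$; in particular the vertex set of $\sgo$ is $\GreenR_1$, the set of right units of $M$, rooted at $1$. Given a vertex $v$ of $\sgw$, let $m$ be the corresponding element of $R_{\overline{w}}$, and define a map $\psi$ on vertices by $\psi(r) = mr$. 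I would then check, in turn, that $\psi$ sends vertices of $\sgo$ to vertices of $\sgw$, that it respects edges, that $\psi(1) = v$, and that it is injective.

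For the first point, if $r$ is a right unit then $rr' = 1$ (since $rr'$ is an idempotent $\GreenR$-related to $1$, and $1$ is the only such idempotent), so $(mr)(mr)' = m r r' m' = mm'$; as $mm'$ is the idempotent $\GreenR$-related to $m$ and hence to $\overline{w}$, this gives $mr \ \GreenR \ m \ \GreenR \ \overline{w}$, so $mr \in R_{\overline{w}}$ as required. That $\psi$ is a graph morphism is then immediate from associativity: an $a$-edge $r \to r\overline{a}$ of $\sgo$ has $r$ and $r\overline{a}$ in $\GreenR_1$, hence $mr$ and $m(r\overline{a}) = (mr)\overline{a}$ both lie in $R_{\overline{w}}$, which is exactly the condition for $mr \to (mr)\overline{a}$ to be an $a$-edge of $\sgw$; and $\psi(1) = m$ is the element at $v$.

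The point at which $\GreenR_1$-injectivity is used --- and the only step with genuine content --- is injectivity on vertices. If $mr_1 = mr_2$ with $r_1, r_2 \in \GreenR_1$, then applying the maximal group image morphism $\sigma$ and cancelling $\sigma(m)$ in the group $M/\sigma$ gives $\sigma(r_1) = \sigma(r_2)$; since $\sigma$ restricts to an injective map on $\GreenR_1$ by hypothesis, $r_1 = r_2$. Injectivity on edges then follows automatically: two edges with a common $\psi$-image carry the same label and, by vertex-injectivity, the same source, hence coincide, because $\sgo$ is deterministic.

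I do not anticipate a serious obstacle: the argument is short, and its only real ingredient is the observation that injectivity of $\sigma$ on $\GreenR_1$ is precisely what lets one pass back from equality of left translates to equality of right units. One could instead obtain \emph{some} morphism $\sgo \to \sgw$ taking $1$ to $v$ straight from Lemma~\ref{lemma_morphismfromschutz}, applied with $\Omega = \sgw$ rooted at $v$ and the empty word in the role of $w$, and then run the same injectivity argument; the advantage of the explicit formula $r \mapsto mr$ is that it makes injectivity transparent and uses nothing about specialness. The one thing to be careful about is to confirm that $\psi$ is a morphism of $X$-labelled graphs and not merely a map of vertex sets, which is why one verifies $mr \in R_{\overline{w}}$ for every right unit $r$ before doing anything else.
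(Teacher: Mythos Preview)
Your proof is correct and essentially the same as the paper's: the paper's morphism $f$ is defined by following in $\sgw$, starting at $v$, a path with the same label as one from $1$ to $u$ in $\sgo$, which is exactly your left translation $r \mapsto mr$; and the injectivity step --- pass to $M/\sigma$, cancel, and invoke $\GreenR_1$-injectivity --- is identical. The only cosmetic difference is that you verify $mr \in R_{\overline{w}}$ via the idempotent computation $(mr)(mr)' = mm'$, whereas the paper phrases this as ``right invertible words are readable from any vertex''.
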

\begin{proof}
Every word readable from $1$ in $\sgo$ is right invertible and therefore readable
from $v$ in $\sgw$. Moreover, if two words $x$ and $y$ reach the same vertex
when read from $1$ in $\sgo$ then they represent the same element, and therefore also reach the same place when read from $v$ in $\sgw$. Thus, there is a well-defined morphism $f : \sgo \to \sgw$ given by setting $f(u)$ to be the unique vertex at the end of a path starting at $v$ and labelled $x$, where $x$ is any
word labelling a path from $1$ to $u$.

For injectivity, suppose $r, s \in \mathcal{R}_1$ are such that $f(r) = f(s)$. This means that $vr = vs$ in the monoid, so in particular $(vr) \sigma (vs)$ which since $\sigma$ is group congruence implies $r \sigma s$. But $M$ is \roi, so this means $r=s$.
\end{proof} 

We shall now build upon Lemma~\ref{sgoembeds} to establish a block decomposition for the lower Sch\"utzenberger graphs.

\begin{definition} A \textit{preblock} of an $X$-labelled directed graph $\Gamma$ is a subgraph\footnote{By a subgraph we mean a subset of vertices and a subset of edges between those
vertices; there is no assumption that it is a ``full'' or ```induced'' subgraph containing all edges between the given vertex set, and indeed in general a preblock will
not be a full subgraph.} of $\Gamma$ which is the image of an injective morphism from $S\Gamma(1)$. A \textit{root} of a preblock is a vertex which is the image of $1 \in S \Gamma(1)$ under such a morphism. A \textit{block} of $\Gamma$ is a preblock which is maximal under
inclusion among preblocks.
\end{definition}

\begin{example} 
Consider the monoid $\langle x, y \mid xyx' = 1 \rangle$ from Example~\ref{roiexample} above, and the Sch\"utzenberger graph $S\Gamma(1)$ as illustrated in Figure~\ref{FigWorth}. It is straightforward to see that there is a preblock rooted at every vertex (extending to the right of the vertex in the illustration), but
only the preblock rooted at $1$, in other words the entire graph, is a block.
\end{example}

\begin{remark}
A preblock (or block) does not typically have a \textit{unique} root; indeed it is easy to see that each preblock will have one root for every automorphism of $S\Gamma(1)$.
By 
\cite[Theorem 3.5]{Stephen:1990ss}
this means the roots of each (pre)block are in (non-canonical) bijection with the units of the monoid.
\end{remark}

\begin{remark}
By inclusion of preblocks, we mean of course that the vertices \textbf{and edges} of one preblock are contained in those of the other. \textit{A priori} it might therefore
seem 
possible for the vertex set of one block to be a proper subset of the vertex set of another block, or even for two distinct blocks to have exactly the same vertex set, but in
fact neither of these things can happen provided the graph $\Gamma$ is bi-deterministic. Indeed suppose a block $C$ contains all the vertices of another block $B$. Choose a root vertex $r$ for $B$. Now because $C$ is isomorphic to $\sgo$, Lemma~\ref{sgoembeds} tells us that $C$ contains a subgraph isomorphic to $\sgo$ rooted at $r$.
Since $\Gamma$ is bi-deterministic this means $C$ must contain all the edges of $B$.
\end{remark}

\begin{lemma}\label{truelemma}
Let $M$ be an \roi\ special inverse monoid generated by a set $X$, and $w$ a word over $X^{\pm1}$. Then
$S \Gamma(w)$ has finitely many blocks. Every preblock of $S \Gamma(w)$ lies in a block. Every vertex of $S \Gamma(w)$ lies in at least one block. All but finitely many edges of $S \Gamma(w)$ lie in at least one block. Any edge which does not lie in a block is a cut edge, and is traversed by the path from the root labelled $w$. Every block has a root which is the vertex corresponding to some prefix of $w$.
\end{lemma}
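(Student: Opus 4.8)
The plan is to establish the statements roughly in the order: (a) every vertex lies in a block; (b) all but finitely many edges lie in a block, and the exceptional edges are cut edges lying on the path labelled $w$; (c) there are only finitely many blocks; (d) every preblock lies in a block; (e) every block has a root at a prefix of $w$.

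First I would prove that every vertex of $S\Gamma(w)$ lies in a preblock, hence (by Zorn, which is immediate since $S\Gamma(1)$ is a fixed graph and any chain of preblocks has an upper bound given by the union of a chain of injective morphisms from $S\Gamma(1)$ — or more cheaply, since there is a bound on how large a preblock can be "seen from" a given vertex) in a block. This is exactly Lemma~\ref{sgoembeds}: for a vertex $v$ of $S\Gamma(w)$ there is an injective morphism $f\colon\sgo\to\sgw$ taking $1$ to $v$, and its image is a preblock with root $v$. For the edge statement, I would argue that an edge $e$ of $S\Gamma(w)$ from $p$ to $q$ labelled $a$ lies in a block provided $a$ or $a'$ labels an edge out of (respectively into) the vertex $1$ of $S\Gamma(1)$ — because then the preblock through $p$ (or $q$) given by Lemma~\ref{sgoembeds}, being deterministic and containing the relevant out-edge of its root, must contain $e$. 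So the only edges not guaranteed to lie in a block are those whose label, in neither orientation, labels an edge at the root of $\sgo$; equivalently the generators involved are not right units (and their inverses not left units). By Corollary~\ref{cor_ddesc} such a generator does not represent an element of $\GreenD_1$, so it lies outside $\GreenD_1$. Here I would use the structural fact that $M\setminus\GreenD_1$-labelled edges in $S\Gamma(w)$ form a forest of cut edges: reading such a generator moves you "out of" the $\GreenD_1$-part, and since the word $w$ read from the root is the only thing forcing $S\Gamma(w)$ to be connected, any such edge not traversed by the $w$-path would disconnect the graph — contradiction with $S\Gamma(w)$ being (as a Schützenberger graph) connected. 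Hence every non-block edge is a cut edge traversed by the $w$-path, and there are only finitely many of them since $w$ has finite length.

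Next, finiteness of the number of blocks. Removing the finitely many non-block (cut) edges breaks $S\Gamma(w)$ into finitely many connected components (at most $|w|+1$ of them). I claim each component contains only finitely many blocks — in fact, I expect each component to be a single block, or more carefully, the blocks within a component are "glued along the tree of units" in a controlled way. The cleanest route: within a component every edge lies in a block, and two blocks sharing a vertex are related via an automorphism of $\sgo$ (the root-changing maps), so the set of roots appearing is a union of orbits; I would show that each component has a root corresponding to a prefix of $w$ (the first vertex of that component reached by the $w$-path), and then that every block of that component has, among its roots, that same prefix-vertex, giving a bound of $|w|+1$ on the number of blocks. This simultaneously delivers statement (e). The argument that a block meeting a component contains the component's distinguished prefix-vertex among its roots is where I'd lean on determinism plus Lemma~\ref{sgoembeds}: the prefix-vertex $v$ has a preblock $P_v$ rooted at it; any block $B$ of the component, being connected to $v$ through block edges only, can be "walked into" from $P_v$, and maximality forces $B\supseteq P_v$, so $v$ is a root of $B$.

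Finally, every preblock lies in a block: this is immediate from the Zorn/maximality setup once we know preblocks exist and blocks are defined as maximal preblocks — any preblock extends to a maximal one.

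\medskip

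The main obstacle I anticipate is the finiteness of the block count, specifically showing that within a single component (after deleting the bad cut edges) only boundedly many blocks occur — a priori a component could contain infinitely many overlapping copies of $\sgo$. The key leverage is $\GreenR_1$-injectivity via Lemma~\ref{sgoembeds} (injectivity of the morphism $\sgo\to\sgw$) together with determinism of $\sgw$: these together should force that once two preblocks share a single vertex with "compatible" local picture they share a common root, so the roots cluster into finitely many automorphism-orbits pinned to prefixes of $w$. Making "compatible local picture" precise — i.e. ruling out the possibility that a component is tiled by infinitely many blocks pairwise meeting in small pieces — is the delicate point, and I expect the resolution to be that any two blocks in the same component in fact share a root (because the connecting path through block-edges, lifted into either block, must by determinism land consistently), so the blocks of a component biject with... no: rather, the blocks of a component all share one common prefix-root, hence are few. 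The rest (vertices covered, cofinitely many edges covered, cut-edge characterisation) I expect to be routine given Corollary~\ref{cor_ddesc} and connectedness of Schützenberger graphs.
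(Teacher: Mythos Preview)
There is a genuine gap in part (b). Your sufficient condition for an edge labelled $a$ to lie in a block --- that $a$ is a right or left unit, so that the preblock rooted at one endpoint contains it --- only detects edges that sit at the \emph{root} of some preblock. But a generator $a \in \GreenJ_1 \setminus (\GreenR_1 \cup \GreenL_1)$ still labels edges in $\sgo$ away from the root (Proposition~\ref{prop_greenschutz}), and hence labels edges inside every block of $S\Gamma(w)$; there can be infinitely many such edges failing your sufficient condition, and your argument gives no mechanism to separate those that lie in blocks from those that do not. The ``structural fact'' about edges labelled outside $\GreenD_1$ is neither proved nor, as stated, adequate: even if such edges were all cut edges, they are not all on the $w$-path and are not finitely many. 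The Zorn step in (a)/(d) is also not justified: an increasing union of preblocks, each isomorphic to the fixed (typically infinite) graph $\sgo$, is not obviously itself the image of an injective morphism from $\sgo$, so need not be a preblock. Finally, your component argument for (c) and (e) is, as you acknowledge, only a sketch; the claim that any block meeting a component must contain the component's prefix-vertex among its roots needs exactly the ingredient you are missing.

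The paper avoids all of these problems by working constructively via Stephen's procedure rather than trying to classify edges by their labels. One glues a copy of $\sgo$ at each vertex of the Munn tree of $w$ to form a graph $\Lambda$, and then folds; Lemma~\ref{sgoembeds} guarantees that folding never identifies two vertices of the same glued copy, so the images of these copies give a finite set $C$ of preblocks in $S\Gamma(w)$, one rooted at each prefix of $w$. The decisive observation --- which replaces both your Zorn argument and your component analysis --- is that \emph{every} preblock of $S\Gamma(w)$ is contained in some member of $C$: a preblock rooted at a vertex $v$ of some $P\in C$ must, by determinism together with the fact that $\sgo$ contains an embedded copy of itself at every vertex (Lemma~\ref{sgoembeds} applied to $\sgo$ itself), coincide with the sub-copy of $\sgo$ inside $P$ at $v$, hence lie in $P$. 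This single step yields finiteness of blocks, the prefix-root property, and containment of preblocks in blocks all at once. The non-block edges are then exactly those whose every preimage in $\Lambda$ is a Munn-tree edge; there are at most $|w|$ of them, they are automatically traversed by the $w$-path, and a direct argument tracking how cut edges behave under individual folding steps shows they remain cut edges in $S\Gamma(w)$.
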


\begin{proof}
Let $\Lambda$ be the directed, labelled graph obtained by starting with the Munn tree of $w$
and gluing a copy of $S \Gamma(1)$ to every vertex. 
As explained in 
Remark~\ref{rem:TreeOfBalls}, 
it follows from results of Stephen 
that $S \Gamma(w)$ may be obtained from
$\Lambda$ by folding (or \textit{bi-determination}, in Stephen's terminology).

Clearly the copies of $\sgo$ glued to the vertices of the Munn tree are a finite set of preblocks of $\Lambda$, containing
all vertices and all but finitely many edges (the missing ones being those of the Munn tree). It follows from Lemma~\ref{sgoembeds}, that
the bi-determination process applied to $\Lambda$ never identifies two vertices within the same preblock; indeed suppose it identified distinct vertices
$v_1$ and $v_2$ within a preblock with root $v$. For $i = 1,2$ let $w_i$ be a word labelling a path from $v$ to $v_i$ within the preblock.
Then starting at the image of $v$ in $S\Gamma(w)$, there will be paths labelled $w_1$ and $w_2$ leading to the same place, which contradicts
the fact that there is an embedded copy of $\sgo$ rooted at $v$. Hence, the image under bi-determination of a preblock in $\Lambda$ is
always a preblock in $S\Gamma(w)$. Let $C$ be the (finite) set of these preblocks in $S\Gamma(w)$, and $B$ the (finite) subset of preblocks in $C$ which are
maximal in $C$ under containment.

We claim that $B$ is the (finite) set of all blocks of $S\Gamma(w)$. 
Clearly, every vertex in $S\Gamma(w)$ lies in some preblock in $B$. Now any other preblock of $S \Gamma(w)$ is rooted at some vertex in $S \Gamma(w)$, and hence at a vertex of some preblock in $B$. By Lemma~\ref{sgoembeds} again, $S \Gamma(1)$ contains a copy of $S \Gamma(1)$ at every vertex, so it follows that every preblock is contained in a preblock in $B$. Since the preblocks in $B$ are defined to be maximal in $C$, and therefore are not contained in each other, it follows that they are maximal among all preblocks, and therefore comprise all blocks of $S \Gamma(w)$.

Since each block in $B$ is the image of a preblock in $\Lambda$ rooted at a Munn tree vertex, and every vertex of the Munn tree can be reached from the identity by
reading some prefix of $w$, it follows that each block has a root which can be reached from $1$ by reading some prefix of $w$ from the root of $S\Gamma(w)$, and which therefore corresponds to a prefix of $w$.

It remains to show that those edges in $S\Gamma(w)$ which do not lie in a block are cut edges, and are traversed by a reading of $w$ from the root. Since every preblock lies in a block, it suffices to show that those edges which do not lie in a preblock are cut edges. As a precursor to this, we claim that if an edge is a cut edge in some graph then its image after any single bi-determination step (and hence, by induction, after finitely many bi-determination steps) either remains a cut edge or is also the image of a non-cut edge. Indeed, suppose that $e$ is a cut edge, and consider the possible effects of a single bi-determination step, that is, of folding two edges together. If neither edge is $e$ then, since the two edges must have a vertex in common, they both lie in the subgraph at the same end of $e$, and it is clear that folding them cannot create a connection to the subgraph at the other end of $e$, so $e$ remains a cut edge. If one is $e$ and the other is another cut edge, then the resulting folded edge remains the only connection between the two end-points of $e$, and hence is still a cut edge.
So the only way $e$ can cease to be a cut edge is by identification with a non-cut edge, as required.

Now suppose that some edge of $S\Gamma(w)$, $e$ say, does not lie in a preblock. Then every preimage of $e$ in $\Lambda$ must be a Munn tree edge. It follows in particular that all such pre-images are traversed by reading $w$ from the root in $\Lambda$, and hence that $e$ itself is traversed by a reading of $w$ from the root in $\sgw$. Now suppose for a contradiction that $e$ is not a cut edge. Let $\pi$ be a path connecting the endpoints of $e$ without traversing $e$. Since every preimage of $e$ in
$\Lambda$ is
a Munn tree edge, every
preimage of $e$ in $\Lambda$ is a cut edge. Let $f$ be one such preimage of $e$ in $\Lambda$. Now the path $\pi$ must be created during folding after some finite number of bi-determination steps, so there is some finite sequence of
bi-determination steps after which $f$ ceases to be a cut edge, having only been identified with other cut edges. But this contradicts the previous paragraph.
\end{proof}

\begin{remark}\label{remark_disjointness}
Lemma~\ref{truelemma} does \textbf{not} say that the blocks are anywhere close to being disjoint! We shall see some cases in which they are disjoint, or close to disjoint in the sense that the intersections are simple to describe, but in general the intersections can be very complicated. It seems that problems concerning the ``lower'' regions ($\GreenD$-classes other than $\GreenD_1$) of an \roi\ special inverse monoid often reduce to understanding the intersections of blocks in its Sch\"utzenberger graphs.
The lemma also does \textbf{not} say that all cut edges in $S\Gamma(w)$ are traversed by a reading of $w$ from the root: in addition to cut edges not lying in a block, there may be cut edges which \textbf{do} lie in blocks, and these are not necessarily traversed by a reading of $w$ from the root.
\end{remark}

\begin{remark}
Under the stronger assumption of $E$-unitarity, ideas similar to those in Lemma~\ref{truelemma} were first introduced (although not made so explicit)
by Stephen \cite{Stephen:1990ss, Stephen93}. In \cite[Section~2.2.5]{LindbladThesis} and \cite{Hermiller:2010bs} it is implicitly suggested (with an incorrect attribution to Stephen \cite{Stephen:1990ss, Stephen93}, who does not actually make
such a claim) that the existence
of such a decomposition suffices to reduce the word problem in the monoid to the problem of deciding whether a given word represents the identity. This line
of reasoning is flawed, since to understand the lower Sch\"utzenberger graphs well enough to solve the word problem requires understanding not only the
internal structure of the blocks (in other words, of $\sgo$) but also the \textit{intersections} of the blocks, which as discussed above may be very complex.
One of the main results of \cite{Hermiller:2010bs}, stating that the word problem is decidable for special inverse monoids with a single \textit{sparse} (see \cite{Hermiller:2010bs} for the definition) relator,
relies upon this claim and therefore cannot be established by the argument given. Rather the paper establishes only that in such monoids it is decidable whether a given word represents the identity. The decidability of the word problem for these monoids remains open, although we conjecture that it is in fact decidable. It may be that
the methods in \cite{Hermiller:2010bs} can be further developed to directly solve the whole of the word problem. Alternatively, it may be possible to establish
(perhaps using methods from \cite{Hermiller:2010bs}) that these monoids have Sch\"utzenberger graphs quasi-isometric to trees, in which case they have
solvable word problem by subsequent work of the first author, Silva and Szak\'acs \cite{GrayNoraPedro2022}.
\end{remark}

The following example shows that a block decomposition as given by Lemma~\ref{truelemma} does \textbf{not} necessarily exist without the
hypothesis of $\GreenR_1$-injectivity, even for one-relator special inverse monoids.

\begin{figure}\label{fige}
\begin{center}
\begin{tikzpicture}[scale=0.8]
\tikzstyle{lightnode}=[circle, draw, fill=black!20,
                        inner sep=0pt, minimum width=4pt]
%
\node[lightnode, label={left:$v$}] (v) at            (0, 0) {};
\node[lightnode, label={left:$x_1$}] (x1) at          (0, 2) {};
\node[lightnode, label={above right:$x_2$}] (x2) at (2, 0) {};
\node[lightnode, label={above right:$y_1$}] (y1) at (2, 2) {};
\node[lightnode, label={below:$y_2$}] (y2) at       (2, -2) {};
\node[lightnode] (z) at                              (4, 0) {};
\draw [->-=0.55, thick, out=-45,in=45,looseness=0.75] (x2) to node[right]{$a$}  (y2);
\draw [->-=0.55, thick, out=135,in=-135,looseness=0.75] (y2) to node[left]{$b$}  (x2);
\draw[->-=0.6, thick] (v) -- (x2) node[midway,below] {$a$};
\draw[->-=0.6, thick] (x1) -- (y1) node[midway,below] {$a$};
\draw[->-=0.6, thick] (y1) -- (x2) node[midway,above right] {$b$};
\draw[->-=0.6, thick] (x2) -- (z) node[midway,below right] {$c$};
\end{tikzpicture}
\end{center}
\caption{
An approximation during Stephen's procedure of $S\Gamma(ac)$ for the monoid considered in Example~\ref{example_aBAabACabc}.
}
\end{figure}

\begin{example}\label{example_aBAabACabc}
Consider the special inverse monoid
$$\langle a,b,c \mid c'abc = ab'a'aba'=1 \rangle \ = \ \langle a,b,c \mid ab'a'aba' \ c'abc = 1 \rangle.$$
The equivalence of the two presentations is because the second relator in the left-hand presentation is an idempotent in the free inverse monoid, so by standard results from the theory of inverse monoid presentations (see for example 
\cite[Lemma 3.3]{Gray2020})
it can be combined into the other relator.

Consider $\sgo$ and $S \Gamma(ac)$ as constructed by Stephen's procedure (see Figure~5).
Note that the former has a simple
path starting at $1$ labelled $ab'a'$. On the other hand, consider the vertex (call it $v$) at the start of the $a$-edge in the Munn tree in $S\Gamma(ac)$. The
path labelled $ab'a'$ here folds up so that it ends in the same place as its prefix labelled $a$; in other words in this monoid $acc'a' \ ab'a' = acc'a' \ a$ even
though $ab'a' \neq a$. This is significant because there is no path coming into $v$ labelled by an element of $\GreenR_1$, which means that vertex $v$ is not in a
block rooted at some other vertex, and it cannot be in a block rooted at itself because the presence of the above relation means there is not an embedded
copy of $\sgo$ rooted at $v$. Hence, $S \Gamma(ac)$ does not have a block decomposition in the above sense (and so, by the contrapositive of Lemma~\ref{truelemma}, the given monoid cannot be \roi).
\end{example}

\section{Automorphisms and Subgroups}\label{sec_subgroups}

In this section we apply the block decomposition developed in Section~\ref{sec_blocks} to study automorphisms of Sch\"utzenberger graphs, and hence group $\GreenH$-classes of \roi\ special inverse monoids.

The following statement is key to the way in which an \roi\ special inverse monoid is governed by $\GreenR_1$.
\begin{theorem}\label{roimaxsubgroups}
Let $M$ be an \roi\ special inverse monoid. Then every subgroup of $M$ has a finite index subgroup which embeds in the group of units.
\end{theorem}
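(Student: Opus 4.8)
The plan is to reduce the statement to a fact about automorphisms of Sch\"utzenberger graphs, and then exploit the block decomposition of Lemma~\ref{truelemma}. First I would reduce to maximal subgroups: an arbitrary subgroup $H$ of $M$ has an idempotent identity $e$ and satisfies $H \subseteq \GreenH_e$, and if some $K \leq \GreenH_e$ of finite index embeds in the group of units then $H \cap K$ has finite index in $H$ (as $[H : H \cap K] \leq [\GreenH_e : K]$) and embeds in the group of units; so it suffices to treat $\GreenH_e$. Fix an idempotent $e$, choose a word $w$ over $X^{\pm 1}$ representing $e$, and recall the classical fact (see \cite[Theorem~3.5]{Stephen:1990ss}) that $\GreenH_e$ is isomorphic to the group $\operatorname{Aut}(\sgw)$ of label-preserving automorphisms of $\sgw = S\Gamma(e)$; the special case $e = 1$ gives that the group of units is $\operatorname{Aut}(\sgo)$.

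The crucial elementary observation is that $\operatorname{Aut}(\sgw)$ acts \emph{freely} on the vertex set of $\sgw$: since $\sgw$ is connected and deterministic, if $\theta \in \operatorname{Aut}(\sgw)$ fixes a vertex $v$ then for each label $a$ and each orientation $\theta$ must fix the unique $a$-edge at $v$ with that orientation, hence fixes its other endpoint, and propagating through the connected graph $\theta$ fixes every vertex and therefore every edge, so $\theta = \mathrm{id}$. Now invoke Lemma~\ref{truelemma}: $\sgw$ has only finitely many blocks $B_1,\dots,B_n$, and $n \geq 1$ because every vertex lies in a block and $\sgw$ is non-empty. Any automorphism of $\sgw$ sends preblocks to preblocks (the composite of an injective morphism $\sgo \to \sgw$ with an automorphism of $\sgw$ is again such a morphism) and preserves inclusion, hence permutes the blocks; this yields a homomorphism $\operatorname{Aut}(\sgw) \to \operatorname{Sym}(\{B_1,\dots,B_n\})$ whose kernel $K$ has index at most $n!$.

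Finally, each $\theta \in K$ stabilises the subgraph $B_1$ and so restricts to a label-preserving automorphism of it; since $B_1$ is the image of an injective morphism from $\sgo$, it is isomorphic as a labelled graph to $\sgo$, so restriction defines a homomorphism $\psi : K \to \operatorname{Aut}(B_1) \cong \operatorname{Aut}(\sgo) \cong \GreenH_1$. If $\psi(\theta) = \mathrm{id}$ then $\theta$ fixes every vertex of $B_1$, and since $B_1$ contains at least one vertex the freeness observation forces $\theta = \mathrm{id}$; hence $\psi$ is injective. Thus $K$ embeds in the group of units and has index at most $n!$ in $\GreenH_e$, which completes the argument.

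I expect essentially all of the difficulty to lie in the machinery already in place (Lemmas~\ref{sgoembeds} and~\ref{truelemma}); the only genuinely new ingredient is the freeness of the action of $\operatorname{Aut}(\sgw)$ on vertices, which is what makes the block-stabiliser $K$ embed faithfully into $\operatorname{Aut}(B_1)$. The main points to handle carefully are therefore that automorphisms permute blocks, that the restriction map to $\operatorname{Aut}(B_1)$ is a well-defined homomorphism, and the freeness argument itself; the reduction from general subgroups to maximal subgroups is routine.
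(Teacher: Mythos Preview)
Your proof is correct and follows essentially the same approach as the paper: identify the maximal subgroup with $\operatorname{Aut}(\sgw)$, use Lemma~\ref{truelemma} to get finitely many blocks, pass to a finite-index subgroup that fixes a block, and use fixed-point-freeness to see that restriction to that block embeds it in $\operatorname{Aut}(\sgo)$. The only cosmetic difference is that the paper takes the stabiliser of a single chosen block (a point stabiliser in the permutation action, of index at most $n$) rather than the full kernel (of index at most $n!$), but this does not affect the argument or the conclusion.
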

\begin{proof}
Clearly it suffices to show that every \textit{maximal} subgroup of $M$ has a finite index subgroup which embeds in the group of units. Let $G$ be a maximal subgroup of $M$, and $w$ be a word representing the identity element of $G$. Then by \cite[Theorem 3.5]{Stephen:1990ss}, $G$ is the group of labelled digraph automorphisms of the Sch\"utzenberger graph $\sgw$. From here on we consider $G$ acting on
$\sgw$.

Notice that, because the blocks were defined from the isomorphism type of $\sgw$, any automorphism of $\sgw$ must map blocks to blocks, in
the strong sense that its restriction to (both vertices and edges of) any block is an isomorphism to another block. Thus, the action of $G$ on
$\sgw$ induces an action of $G$ on the (finite, by Lemma~\ref{truelemma}) set of blocks of $\sgw$.

Now choose any block $X$ of $\sgw$, and fix a root $r$ for $X$. Let $K$ be the stabiliser of the point $X$ under the action of $G$ on the set of blocks, in other words,
the subgroup of all automorphisms in $G$ which map $X$ to itself. Then  $K$ is a finite index subgroup of $G$. Since $K$ maps $X$ to itself, the action of $K$ on $\sgw$ restricts to an action by automorphisms on $X$. Moreover, this action is faithful: indeed, since $\sgw$ is a connected, labelled and bi-deterministic graph, its automorphisms are fixed-point free, so any
element of $K$ which acts trivially on $X$ must also act trivially on $\sgw$, meaning it must be the identity element of $K$. So $K$ acts faithfully by automorphisms on $X$, which is isomorphic to $\sgo$.  Hence,  $K$ embeds in the automorphism group of $\sgo$, which is isomorphic to the group of units by another application of \cite[Theorem 3.5]{Stephen:1990ss}.
\end{proof}

One might ask whether the stronger statement, that every subgroup actually embeds in the group of units, is true. The following example shows that it is not:
\begin{figure}
\begin{center}
\begin{tikzpicture}
\tikzstyle{lightnode}=[circle, draw, fill=black!20,
                        inner sep=0pt, minimum width=4pt]
\draw[dashed] (8, 0) -- (8.5,0);
\draw[dashed] (4.5+2, -1) -- (4.5+2.5,-1);
\draw[dashed] (4.5+2+4, -1) -- (4.5+2.5+4,-1);
\draw[dashed] (4.5+2, -2) -- (4.5+2.5,-2);
\draw[dashed] (4.5+2+4, -2) -- (4.5+2.5+4,-2);
\node[lightnode] (A) at (0, 0) {};
\node[lightnode] (B) at (4, 0) {};
\node[lightnode] (B2) at (4, -1) {};
\node[lightnode] (C) at (8, 0) {};
\node[lightnode] (C2) at (8, -1) {};
\node[lightnode] (D) at (4.5+2, -1) {};
\node[lightnode] (D2) at (4.5+2, -2) {};
\node[lightnode] (E) at (7.5+3, -1) {};
\node[lightnode] (E2) at (7.5+3, -2) {};
\draw[->-=0.6, thick] (A) -- (B) node[midway,above left] {$x$};
\draw[->-=0.6, thick] (B2) -- (A) node[midway,below left] {$y$};
\draw [->-=0.6, thick, out=-45,in=45,looseness=0.75] (B) to node[right]{$p$}  (B2);
\draw [->-=0.6, thick, out=135,in=-135,looseness=0.75] (B2) to node[left]{$p$}  (B);
\draw[->-=0.6, thick] (B) -- (C) node[midway,above left] {$x$};
\draw[->-=0.6, thick] (C2) -- (B) node[midway,below left] {$y$};
\draw [->-=0.6, thick, out=-45,in=45,looseness=0.75] (C) to node[right]{$p$}  (C2);
\draw [->-=0.6, thick, out=135,in=-135,looseness=0.75] (C2) to node[left]{$p$}  (C);
\draw[->-=0.6, thick] (B2) -- (D) node[midway,above left] {$x$};
\draw[->-=0.6, thick] (D2) -- (B2) node[midway,below left] {$y$};
\draw [->-=0.6, thick, out=-45,in=45,looseness=0.75] (D) to node[right]{$p$}  (D2);
\draw [->-=0.6, thick, out=135,in=-135,looseness=0.75] (D2) to node[left]{$p$}  (D);
\draw[->-=0.6, thick] (C2) -- (E) node[midway,above left] {$x$};
\draw[->-=0.6, thick] (E2) -- (C2) node[midway,below left] {$y$};
\draw [->-=0.6, thick, out=-45,in=45,looseness=0.75] (E) to node[right]{$p$}  (E2);
\draw [->-=0.6, thick, out=135,in=-135,looseness=0.75] (E2) to node[left]{$p$}  (E);
\end{tikzpicture}
\end{center}
\caption{
An illustration of the Sch\"utzenberger graph $\sgo$ of 
$\langle x,p,y \mid xpy = xp'y = 1 \rangle$ from Example \ref{example_roi1}, 
which clearly has trivial automorphism group.
}\label{Figexample_roi1_1}
\end{figure}

\begin{figure}
\begin{center}
\begin{tikzpicture}
\tikzstyle{lightnode}=[circle, draw, fill=black!20,
                        inner sep=0pt, minimum width=4pt]
\node[lightnode] (A) at (0, 0) {};
\node[lightnode] (B) at (4, 0) {};
\node[lightnode] (B2) at (4, -1) {};
\node[lightnode] (C) at (8, 0) {};
\draw[dashed, shorten <=0pt,shorten >=0pt](B.center)arc(90+180:-270+180:1) node[above] {};
\node (a) at (4, 1) {$S\Gamma(1)$};
\draw[dashed, shorten <=0pt,shorten >=0pt](B2.center)arc(90:-270:1) node[above] {};
\node (a) at (4, -2) {$S\Gamma(1)$};
\draw[->-=0.6, thick] (A) -- (B) node[midway,above left] {$x$};
\draw[->-=0.6, thick] (B2) -- (A) node[midway,below left] {$y$};
\draw [->-=0.6, thick, out=-45,in=45,looseness=0.75] (B) to node[right]{$p$}  (B2);
\draw [->-=0.6, thick, out=135,in=-135,looseness=0.75] (B2) to node[left]{$p$}  (B);
\draw[->-=0.6, thick] (B) -- (C) node[midway,above left] {$y$};
\draw[->-=0.6, thick] (C) -- (B2) node[midway,below left] {$x$};
\end{tikzpicture}
\end{center}
\caption{
An illustration of the Sch\"utzenberger graph $S \Gamma (xy)$ of 
$\langle x,p,y \mid xpy = xp'y = 1 \rangle$ from Example \ref{example_roi1}, 
which has automorphism group isomorphic to the cyclic group $\mathbb{Z}_2$ of order two.
}\label{Figexample_roi1_2}
\end{figure}

\begin{example}\label{example_roi1}
Consider the special inverse monoid
$$\langle x,p,y \mid xpy = xp'y = 1 \rangle.$$
Consider the Sch\"utzenberger graphs $\sgo$ and $S \Gamma (xy)$, which are easy to construct
and are illustrated in Figures~\ref{Figexample_roi1_1} and \ref{Figexample_roi1_2}. 
 It is easy to see that the
former has no automorphisms (so the group of units is trivial), while the latter has an automorphism exchanging the vertices at the start and end of the path $xy$ coming from the Munn tree (so there is a subgroup $\mathbb{Z}_2$, which in fact is easily seen to be a maximal subgroup, in the $\GreenD$-class of $xy$).

The maximal group image of this monoid (in other words, the
group given by interpreting the monoid presentation as a group
presentation) is easily seen to be the free product of an infinite cyclic group generated by $x$ and an order-$2$ cyclic group generated by $yx=p$. It is clear from 
considering $\sgo$ that it embeds into the Cayley graph of this group, so that
the monoid is \roi\ by Proposition~\ref{roiequiv}.

On the other hand, this monoid is \textbf{not} E-unitary, since one can verify
by constructing $S\Gamma(p^2)$ that the element $p^2$ is not
idempotent.
\end{example}

In Section~\ref{sec_construction} below, we will construct further examples where the group of units is trivial but arbitrary finite groups arise as maximal subgroups.
The
example above and those in the following section are \roi\ but not $E$-unitary. We have not been able to construct an $E$-unitary special inverse monoid where the maximal
subgroups do not embed in the group of units, nor to prove that such a monoid cannot exist. Similarly, we do not know if there are stronger restrictions on the
behaviour of subgroups in the 1-relator case. In these cases we do not even know if a trivial group of units implies that all subgroups are trivial.
We shall see shortly (Theorem~\ref{thm_disjoint}) that one case in
which subgroups \textbf{do} have to embed in the group of units is for $\GreenD$-classes of an \roi\ special inverse monoid such that the corresponding block decomposition
as given by Lemma~\ref{truelemma} is disjoint (see Remark~\ref{remark_disjointness} above):

In order to prove this, we first observe that 
 in the case of a $\GreenD$-class whose Sch\"utzenberger graph has edges not lying in any block, the block decomposition suffices to prove a very strong statement about the corresponding maximal subgroups: they are necessarily finite (even if the group of units of the monoid is infinite, and even if the monoid is not finitely presented).
\begin{theorem}\label{thm_cutedgefinite}
Let $M$ be an \roi\ special inverse monoid generated by $X$. Let $w$ be a word such that $S\Gamma(w)$ has an edge not contained in any block. Then
the maximal subgroups in the $\GreenD$-class of $w$ are finite.
\end{theorem}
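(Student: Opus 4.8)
The plan is to adapt the faithfulness argument used at the end of the proof of the preceding (disjoint-blocks) theorem, which generalises once one notices that the set of edges we act on is automorphism-invariant for intrinsic reasons. First I would invoke \cite[Theorem 3.5]{Stephen:1990ss} to identify the maximal subgroups in the $\GreenD$-class of $w$ with the group $G$ of labelled digraph automorphisms of $\sgw$, and henceforth view $G$ as acting on $\sgw$.

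Let $K$ denote the set of edges of $\sgw$ that lie in no block. By Lemma~\ref{truelemma} the set $K$ is finite, and by hypothesis it is non-empty. Since the blocks of $\sgw$ were defined purely from its isomorphism type, every automorphism in $G$ carries blocks to blocks, hence carries edges lying in some block to edges lying in some block, and therefore preserves $K$ setwise. Thus the action of $G$ on $\sgw$ restricts to an action by permutations on the finite set $K$.

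It then remains only to check that this action is faithful, for this gives an embedding of $G$ into the finite symmetric group $\operatorname{Sym}(K)$, and hence finiteness of $G$ and of the maximal subgroups. For faithfulness, suppose some $g \in G$ fixes an edge $e \in K$; since a labelled digraph automorphism preserves sources, targets and labels, fixing the directed edge $e$ forces $g$ to fix both of its endpoints. But $\sgw$ is connected, deterministic and labelled, so any non-trivial automorphism of $\sgw$ is fixed-point free; hence $g$ is the identity, and the kernel of the action on $K$ is trivial.

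I do not anticipate a genuine obstacle: the argument is short and every ingredient has already appeared above. The two points requiring care are that $K$ is an isomorphism invariant of $\sgw$ (immediate from the definition of block — and note the contrast with the disjoint-blocks theorem, where the analogous edge set was preserved only because a specific block was fixed), and the fixed-point-freeness of non-trivial automorphisms of a connected deterministic labelled graph. The only substantive input is Lemma~\ref{truelemma}, of which we need merely that $K$ is finite; the hypothesis that $\sgw$ has an edge lying in no block is exactly what makes $K$ non-empty, which is all the faithfulness argument requires.
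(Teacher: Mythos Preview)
Your proof is correct and follows essentially the same approach as the paper: use Lemma~\ref{truelemma} to see that the set of block-free edges is finite, observe it is automorphism-invariant because blocks are, and deduce faithfulness of the action on this set from the fixed-point-freeness of non-trivial automorphisms on vertices. The paper's version is terser but the logic is identical.
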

\begin{proof}
By Lemma~\ref{truelemma} the graph $S\Gamma(w)$ has only finitely many edges not contained in blocks. Since the block decomposition is automorphism invariant,
the automorphisms of $S\Gamma(w)$ must permute these edges. Since the action of the automorphism group is fixed-point free it must act faithfully on
this finite set, and therefore must be finite.
\end{proof}

We are now ready to consider the special case in which the block decomposition
as given by Lemma~\ref{truelemma} is disjoint:
\begin{theorem}\label{thm_disjoint}
Let $M$ be an \roi\ special inverse monoid, $w$ a word and suppose the blocks of $S\Gamma(w)$ have pairwise disjoint vertex sets. Then the maximal subgroups
in the $\GreenD$-class of $w$ are isomorphic to the same subgroup of the group of units, and if $w$ does not represent an element of $\GreenD_1$ this subgroup is finite.
\end{theorem}
\begin{proof}
By \cite[Theorem 3.5]{Stephen:1990ss}, the maximal subgroups are all isomorphic to the automorphism group of $\sgw$, which we will denote by $G$.

Consider the (finite, typically non-bi-deterministic) labelled digraph $\Omega$ with vertices the blocks of $\sgw$, and an edge from $X$ to $Y$ labelled $x$ if and only if $\sgw$ has an edge from a vertex of $X$ to a vertex of $Y$ labelled $x$. Since $\sgw$ is connected and every vertex lies in a block, $\Omega$ is connected.
Since the blocks of $\sgw$ are disjoint, edges between distinct blocks cannot lie within blocks, so by Lemma~\ref{truelemma} they are cut-edges. It follows that the edges of $\Omega$ are all cut-edges: in other words, the underlying undirected graph of $\Omega$ is a tree.

Clearly since automorphisms map blocks to blocks, the action of $G$ by automorphisms on $\sgw$ induces an action by labelled digraph automorphisms on $\Omega$. Since $\Omega$ is a finite digraph whose underlying graph is a tree, there is a vertex of $\Omega$ which is fixed by $G$ 
(because by a result of Halin \cite[Lemma 2]{Halin1973} an automorphism of a
finite undirected tree fixes either an edge or a vertex)
and hence a block $X$ of
$\sgw$ which is fixed setwise by the action of $G$. Thus, $G$ can be restricted to act by directed graph automorphisms on the block $X$, and since
non-trivial automorphisms of $\sgw$ are fixed-point free, this action is faithful. Since $X$ is isomorphic to $\sgo$ this means $G$ acts faithfully on $\sgo$,
so embeds in the automorphism group of $\sgo$ which is exactly the group of units.

It remains to show that either $w$ represents an element of $\GreenD_1$ or $G$ is finite.
Consider the set $K$ of edges in $\sgw$ which have exactly one end in $X$. If $K$ is empty then 
$X$
is not connected to any other block, which since $\sgw$ is connected means $\sgw \cong X \cong \sgo$, so by \cite[Theorem~3.4(a)]{Stephen:1990ss}, $w$ represents
an element of $\GreenD_1$.
On the other, if $K$ is non-empty then $G$ is finite by Theorem~\ref{thm_cutedgefinite}.
\end{proof}

Recall that the block decomposition given by Lemma~\ref{truelemma} leaves open the possibility that finitely many edges do not lie in any block. One might ask
if this can really happen; the following example shows that it can.


%
%
\begin{figure}
\begin{center}
\begin{tikzpicture}
\tikzstyle{lightnode}=[circle, draw, fill=black!20,
                        inner sep=0pt, minimum width=4pt]
\node[lightnode] (X) at (-1, -1) {};
\node[lightnode] (Y) at (7, -1) {};
\node[lightnode] (A) at (0, 0) {};
\node[lightnode] (B) at (3, 0) {};
\node[lightnode] (B2) at (3, -1) {};
\node[lightnode] (C) at (6, 0) {};
\draw[dashed, shorten <=0pt,shorten >=0pt](B.center)arc(90+180:-270+180:1) node[above] {};
\node (a) at (3, 1) {$S\Gamma(1)$};
\draw[dashed, shorten <=0pt,shorten >=0pt](B2.center)arc(90:-270:1) node[above] {};
\node (a) at (3, -2) {$S\Gamma(1)$};
\draw[dashed, shorten <=0pt,shorten >=0pt](Y.center)arc(90:-270:1) node[above] {};
\node (a) at (3+4, -2) {$S\Gamma(1)$};
\draw[dashed, shorten <=0pt,shorten >=0pt](X.center)arc(90:-270:1) node[above] {};
\node (a) at (3-4, -2) {$S\Gamma(1)$};
%
%
%
%
\draw[->-=0.6, thick] (A) -- (X) node[midway,above] {$q$};
\draw[->-=0.6, thick] (C) -- (Y) node[midway,above] {$q$};
\draw[->-=0.6, thick] (B) -- (C) node[midway,above left] {$y$};
\draw[->-=0.6, thick] (A) -- (B) node[midway,above left] {$x$};
\draw[->-=0.6, thick] (B2) -- (A) node[midway,below left] {$y$};
\draw [->-=0.6, thick, out=-45,in=45,looseness=0.75] (B) to node[right]{$p$}  (B2);
\draw [->-=0.6, thick, out=135,in=-135,looseness=0.75] (B2) to node[left]{$p$}  (B);
\draw[->-=0.6, thick] (B) -- (C) node[midway,above left] {$y$};
\draw[->-=0.6, thick] (C) -- (B2) node[midway,below left] {$x$};
\end{tikzpicture}
\end{center}
\caption{
An illustration of the Sch\"utzenberger graph $S\Gamma(qq'xyqq')$ of 
$\langle x,p,q,y \mid xpy = xp'y = 1 \rangle$ from Example \ref{example_fproi1}, 
which has automorphism group isomorphic to the cyclic group $\mathbb{Z}_2$ of order two.
Note that from the definitions $S\Gamma(1)$ for this example is isomorphic to the $S\Gamma(1)$ in Figure~\ref{Figexample_roi1_1}. 
The graph $S\Gamma(qq'xyqq')$ has four blocks, rooted at either end of the two edges in the figure labelled $q$. 
The two copies of $S\Gamma(1)$ rooted at either end of the two edges in the figure labelled $p$ are not blocks, but they are both pre-blocks. 
}\label{Figexample_fproi1}.
\end{figure}

\begin{example}\label{example_fproi1}
Consider the inverse monoid
$$\langle x,p,q,y \mid xpy = xp'y = 1 \rangle.$$
Note that it is the free product of the monoid in Example~\ref{example_roi1} with a free inverse monoid of rank $1$, and is
easily seen to be \roi\ with trivial group of units by a similar argument. Consider the Sch\"utzenberger graph $S\Gamma(qq'xyqq')$.
 This has four blocks, rooted at either end of the two edges labelled $q$. The two edges labelled $q$ are
cut edges not contained in any block. 
See Figure~\ref{Figexample_fproi1} for an illustration of $S\Gamma(qq'xyqq')$.
There is clearly an automorphism swapping the two $q$ edges, and since there are no other $q$ edges and automorphisms are fixed-point free, there can be no other automorphisms. So the automorphism group of
$S\Gamma(qq'xyqq')$, and hence the maximal subgroup in the $\GreenD$-class of $qq'xyqq'$, is isomorphic to $\mathbb{Z}_2$.
\end{example}

\section{Subgroups Differing from the Group of Units}\label{sec_construction}

Our aim in this section is to construct examples of \roi\ special inverse monoids where the group of units is trivial but an arbitrary finite group arises as a maximal subgroup. 

\begin{theorem}\label{thm:trivGroupUnits}
For every finite group $G$, there exists an \roi\ special inverse monoid with trivial group of units and a maximal subgroup isomorphic to $G$.
\end{theorem}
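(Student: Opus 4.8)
The plan is to construct, for a given finite group $G = \langle g_1, \ldots, g_k \rangle$ with a finite presentation $\langle g_1, \ldots, g_k \mid r_1, \ldots, r_m \rangle$, an explicit special inverse monoid whose group of units is trivial but which has a $\GreenD$-class whose Sch\"utzenberger graph is built from finitely many copies of $\sgo$ glued in a tree-like pattern, with $G$ acting on that tree by permuting the copies. The key design principle, suggested by the block-decomposition results of Section~\ref{sec_blocks} and the proof of Theorem~\ref{roimaxsubgroups}, is that a maximal subgroup is the automorphism group of some $\sgw$, and that group acts on the finite set of blocks; so we want a monoid in which $\sgo$ is rigid (forcing trivial units, via \cite[Theorem~3.5]{Stephen:1990ss}) but in which the blocks of some $\sgw$ can be permuted exactly according to the group $G$.

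First I would fix generators: a single ``anchor'' letter whose powers give a rigid $\sgo$ with trivial automorphism group (so that the group of units is trivial — this should be arranged using a relator that breaks all symmetry of the line, along the lines of Examples~\ref{roiexample} and~\ref{example_roi1}), together with generators $x$, $y$ playing the role they do in Example~\ref{example_roi1} (relators $xp_iy = 1$ for suitable ``labels'' $p_i$), so that $S\Gamma(xy)$-type graphs acquire automorphisms swapping the endpoints of a Munn-tree edge. The second step is to arrange the combinatorial data so that the relevant lower Sch\"utzenberger graph $\sgw$ has exactly $|G|$ blocks, indexed by the elements of $G$, glued along cut edges according to the (right) Cayley graph of $G$ with respect to $g_1, \ldots, g_k$ — the relators of $G$ are encoded so that reading a relator $r_j$ around the tree of blocks returns to the starting block. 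One then chooses $w$ to be a word landing in that $\GreenD$-class; since the blocks have pairwise disjoint (or nearly disjoint, up to controlled cut-edge intersections) vertex sets, the earlier theorems on disjoint block decompositions apply.

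The third step is to verify the two things we want: (a) the monoid is \roi, which I would check via condition (ii) or (vi) of Proposition~\ref{roiequiv} by exhibiting the maximal group image explicitly (it should be a free product of the infinite cyclic group on the anchor generator with $G$, or something close, so that $\sgo$ embeds in its Cayley graph), and (b) the automorphism group of $\sgw$ is exactly $G$. For (b) one shows: any automorphism permutes the $|G|$ blocks, inducing an automorphism of the Cayley-graph-of-$G$ pattern on blocks; since each block is a rigid copy of $\sgo$, the automorphism is determined by this induced permutation; the permutations that actually extend to automorphisms of $\sgw$ are exactly the left translations by elements of $G$ (Cayley's theorem plus the relators of $G$ forcing no extra symmetry and the rigidity of $\sgo$ forbidding fewer), so $\mathrm{Aut}(\sgw) \cong G$. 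Then \cite[Theorem~3.5]{Stephen:1990ss} identifies $G$ as the maximal subgroup in that $\GreenD$-class, while the rigidity of $\sgo$ gives trivial group of units.

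The main obstacle I expect is step two: actually writing down relators that (i) realise the Cayley graph of $G$ as the block-adjacency pattern of a single Sch\"utzenberger graph, (ii) keep that graph's blocks genuinely disjoint (or with intersections controlled enough that no spurious automorphisms or block merges appear), and (iii) simultaneously keep $\sgo$ rigid and the monoid \roi. Getting all the foldings to behave — so that the tree of blocks closes up exactly along the relators of $G$ and not more, and so that no unwanted edge identifications occur when Stephen's procedure is run — is the delicate combinatorial heart of the argument; verifying it will likely require a careful direct analysis of $\sgo$ and of the approximations to $\sgw$ produced by Stephen's procedure, in the spirit of Example~\ref{example_aBAabACabc}.
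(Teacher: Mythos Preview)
Your high-level strategy aligns with the paper's: engineer a monoid with rigid $\sgo$ and some $\sgw$ whose blocks are permuted by $G$, then verify $\GreenR_1$-injectivity via the maximal group image. But what you have is a plan, not a proof --- you explicitly flag ``actually writing down relators'' as the main obstacle and leave it undone, and that construction is essentially the entire content of Section~\ref{sec_construction}.

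There is also an internal tension in your sketch worth flagging. You want blocks ``glued in a tree-like pattern'' along cut edges (so that the disjoint-blocks results apply) while simultaneously having the block-adjacency encode the Cayley graph of $G$; for nontrivial $G$ the Cayley graph has cycles, so these two desiderata conflict --- edges in a relator-cycle are not cut edges, and the theorem you invoke does not apply. The paper does not resolve this by making the blocks disjoint, and its construction differs substantially from your outline: there is no single anchor letter and no direct use of the $xp_iy$ pattern of Example~\ref{example_roi1}. Instead it takes the particular presentation $\langle\, G\setminus\{1\} \mid \text{all length-}2\text{ and length-}3\text{ relators}\,\rangle$ for $G$, replaces each generator $a$ by a product $x_a y_a$, and introduces auxiliary letters $\delta_{r,k}$ (one per position in each relator) with four-letter defining relations $s_{r,k} = x_{r_k}\delta_{r,k}\delta_{r,k-1}^{-1}y_{r_{k-1}}$. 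The target graph $\Omega \cong \sgw$ then has $G$-indexed vertices $v_g$ joined as in the Cayley graph, together with ``hub'' vertices $t_{g,r}$ at which all the $\delta$-edges from a single relator-cycle converge; these hubs are precisely what allow the relator-cycles to close without causing unwanted folds. Rigidity of $\sgo$ and $\mathrm{Aut}(\Omega)\cong G$ are proved by a vertex-type analysis (Lemma~\ref{lemma_sgoprops} and the sequence of lemmas following it), and $\GreenR_1$-injectivity is checked not via condition (ii) or (vi) of Proposition~\ref{roiequiv} but by Tietze-reducing the group presentation to exhibit the maximal group image as $FG(\Lambda\cup X)\ast G$ and then verifying that the images of proper prefixes of the $s_{r_k}$ generate a free submonoid there.
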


We prove the theorem with a construction and a number of lemmas. Let $G$ be a finite group.
For simplicity we consider a presentation for $G$ with very large sets of generators and relations.

Specifically, consider the finite special monoid presentation $\langle A \mid R \rangle$ for $G$ where $A = G \setminus 
\lbrace 1 \rbrace$ and $R$ consists of all 2-letter and 3-letter
positive words over $A$ which are equal to $1$ in $G$. For each generator $a \in A$ introduce new letters
$x_a$ and $y_a$ and their formal inverses, and define $\overline{a} = x_a y_a$. For $w \in (A \cup A^{-1})^*$
define $\overline{w} = \overline{w_1} \dots \overline{w_{|w|}}$.
Let $X = \lbrace x_a \mid a \in A \rbrace$ and $Y = \lbrace y_a \mid a \in A \rbrace$.

For each relator $r = r_1 \dots r_{|r|} \in R$, and each
$1 \leq k \leq |r|$ introduce a new letter $\delta_{r,k}$, let $\Delta$ be the alphabet of these letters, and define a word
$$s_{r,k} = x_{r_k} \delta_{r,k} (\delta_{r,k-1})^{-1} y_{r_{k-1}} \in X \Delta \Delta^{-1} Y$$
where indices are interpreted modulo $|r|$, so that $r_{0} = r_{|r|}$ and $\delta_{r,-1} = \delta_{r,|r|}$.
 Now let $M$ be
the special inverse monoid generated by the set
$$X \cup Y \cup \Delta$$
subject to the set of four-letter relations
$$\lbrace s_{r,k} \mid r \in R, 1 \leq k \leq |r| \rbrace.$$

We shall study the Sch\"utzenberger graphs of the monoid $M$, and other graphs with edges labelled by the generators of $M$. In these graphs, we shall
use the terms \textit{$x$-edges}, \textit{$y$-edges} and \textit{$\delta$-edges} to mean edges labelled respectively by some $x_a$, by some $y_a$ and by some
$\delta_{r,k}$.

\begin{lemma}\label{lemma_sgoprops}
The graph $S\Gamma(1)$ has the following properties:
\begin{itemize}
\item[(i)] The root is the unique vertex with the property that all edges incident with it are either $x$-edges going out or $y$-edges coming in.
\item[(ii)] There are no vertices having both an $x$-edge coming in and a $y$-edge going out.
\end{itemize}
In particular $\sgo$ has trivial automorphism group, so $M$ has trivial group of units.
\end{lemma}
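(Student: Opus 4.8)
The plan is to pin down the local structure of $\sgo=S\Gamma(1)$ directly from the presentation, using Proposition~\ref{prop_r1gen}: the submonoid $\GreenR_1$ is generated by the elements represented by the proper prefixes of the relators $s_{r,k}=x_{r_k}\delta_{r,k}(\delta_{r,k-1})^{-1}y_{r_{k-1}}$, namely $\overline{x_{r_k}}$, $\overline{x_{r_k}\delta_{r,k}}$ and $\overline{x_{r_k}\delta_{r,k}(\delta_{r,k-1})^{-1}}$, and dually $\GreenL_1$ is generated by the proper suffixes $\overline{y_{r_{k-1}}}$, $\overline{(\delta_{r,k-1})^{-1}y_{r_{k-1}}}$ and $\overline{\delta_{r,k}(\delta_{r,k-1})^{-1}y_{r_{k-1}}}$. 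I would introduce two homomorphisms $M\to\mathbb{Z}$: the map $\psi$ with $x_a\mapsto 1$, $y_a\mapsto -1$, $\delta_{r,k}\mapsto 0$, and the map $\chi$ with $x_a\mapsto 1$, $y_a\mapsto -1$, $\delta_{r,k}\mapsto 1$; both are well defined because each $s_{r,k}$ maps to $0$. Then $\psi$ takes the value $1$ on every prefix-generator above and $-1$ on every suffix-generator, so $\psi(\GreenR_1)\subseteq\mathbb{Z}_{\ge 0}$, $\psi(\GreenL_1)\subseteq\mathbb{Z}_{\le 0}$, and (counting factors) an element of $\GreenR_1$ or of $\GreenL_1$ on which $\psi$ vanishes must be the empty product $1$; meanwhile $\chi(\overline{\delta_{r,k}})=1\neq 0$, so $\overline{\delta_{r,k}}\neq 1$.

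For part (i), recall that the vertices of $\sgo$ are the right units, with an $\ell$-edge from $s$ to $t$ exactly when $s$ and $t$ are right units and $s\overline\ell=t$. Thus the root $1$ has an incoming $x_a$-edge only if $\overline{x_a}$ is a left unit (as $s\overline{x_a}=1$), which is impossible since $\psi(\overline{x_a})=1$; it has an outgoing $y_a$-edge, or an incoming or outgoing $\delta_{r,k}$-edge, only if $\overline{y_a}$, respectively $\overline{\delta_{r,k}}$, lies in $\GreenR_1\cup\GreenL_1$, which is ruled out because $\psi(\overline{y_a})=-1$ while $\psi(\overline{\delta_{r,k}})=0$ would force $\overline{\delta_{r,k}}=1$, contradicting $\overline{\delta_{r,k}}\neq 1$. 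Hence every edge at $1$ is an $x$-edge going out or a $y$-edge coming in. For uniqueness, suppose $v$ is any vertex all of whose incident edges are of this kind, and write $v$ as a product of proper prefixes of relators; if $v\neq 1$ we may arrange the product to have a nonempty last factor $g$, which is one of $\overline{x_{r_j}}$, $\overline{x_{r_j}\delta_{r,j}}$ or $\overline{x_{r_j}\delta_{r,j}(\delta_{r,j-1})^{-1}}$, and write $v=ug$ with $u\in\GreenR_1$. If $g=\overline{x_{r_j}}$ then $u\overline{x_{r_j}}=v$ gives an incoming $x_{r_j}$-edge at $v$; if $g=\overline{x_{r_j}\delta_{r,j}}$ then $u\overline{x_{r_j}}\in\GreenR_1$ and $(u\overline{x_{r_j}})\overline{\delta_{r,j}}=v$ gives an incoming $\delta_{r,j}$-edge at $v$; if $g=\overline{x_{r_j}\delta_{r,j}(\delta_{r,j-1})^{-1}}$ then $u\overline{x_{r_j}\delta_{r,j}}\in\GreenR_1$, and the elementary fact that $w\in\GreenR_1$ together with $w\overline\ell\in\GreenR_1$ implies $w\overline\ell\,\overline\ell^{-1}=w$ (because $w\overline\ell\,\overline\ell^{-1}\le w$ and $w\overline\ell\,\overline\ell^{-1}\,\GreenR\,w$, and $a\le b$ with $a\,\GreenR\,b$ force $a=b$ in an inverse semigroup) shows $v\,\overline{\delta_{r,j-1}}=u\overline{x_{r_j}\delta_{r,j}}$, giving an outgoing $\delta_{r,j-1}$-edge at $v$. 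Each case contradicts the hypothesis on $v$, so $v=1$.

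Part (ii) is, I expect, the main obstacle. It amounts to $\GreenR_1\overline{x_a}\cap\GreenR_1\overline{y_b}^{-1}=\varnothing$ for all $a,b$, and this cannot be seen in the maximal group image --- the $x\delta\delta^{-1}y$ shape of the relators stops an $x$-edge being forced adjacent to a $y$-edge in $\sgo$, but it does not make this intersection empty in $M/\sigma$ --- so one genuinely has to control the folding in Stephen's construction of $\sgo$. The route I would take is to build an explicit deterministic $(X\cup Y\cup\Delta)$-labelled model graph $\Omega$ and apply Lemma~\ref{lemma_morphismfromschutz}. The vertices of $\Omega$ would come in four kinds reflecting the positions $p_0,p_1,p_2,p_3$ visited by a relator loop: ``$p_0$-vertices'' indexed by $G$; ``$p_1$-vertices'' carrying an incoming $x$-edge and outgoing $\delta$-edges but no outgoing $y$-edge; ``$p_3$-vertices'' carrying an outgoing $y$-edge and outgoing $\delta$-edges but no incoming $x$-edge; and ``$p_2$-vertices'', one copy of $G$ per relator $r$, each receiving the two $\delta$-edges that close $s_{r,k}$ and $s_{r,k+1}$; every vertex also carries the canonical outgoing $x_a$-edges and incoming $y_a$-edges forced by closure of relators. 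The laborious part, and the place where the hypothesis that $R$ consists of relators of $G$ is essential, is verifying that every $s_{r,k}$ reads as a closed path at every vertex of $\Omega$: this forces the $\delta$-edges attached to $r$ to be assigned consistently around the cycle $k\mapsto k+1\bmod|r|$, and the composite ``shift'' around $r$ is trivial precisely because $r_{|r|}r_1\cdots r_{|r|-1}=1$ in $G$. Granting this, Lemma~\ref{lemma_morphismfromschutz} supplies a label- and root-preserving morphism $\sgo\to\Omega$; since morphisms preserve both label and orientation of edges, and $\Omega$ has no vertex with both an incoming $x$-edge and an outgoing $y$-edge, neither does $\sgo$. (Alternatively one can run Stephen's procedure on $\sgo$ and check by induction on the sewing/folding steps that no vertex ever acquires both an incoming $x$-edge and an outgoing $y$-edge; the obstacle there is to rule out directly the dangerous identifications, which again reduces to the same consistency of the $\delta$-cycles.)

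Finally, for the ``in particular'': a labelled-digraph automorphism of $\sgo$ must permute the set of vertices satisfying the local condition in (i), which by uniqueness is $\{1\}$, so it fixes the root; since $\sgo$ is a connected deterministic inverse graph, Lemma~\ref{lemma_uniquemap} then forces it to be the identity. Hence $\operatorname{Aut}(\sgo)$ is trivial, and by \cite[Theorem~3.5]{Stephen:1990ss} the group of units of $M$, being isomorphic to $\operatorname{Aut}(\sgo)$, is trivial.
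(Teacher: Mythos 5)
Your part (i) is correct, and it takes a genuinely different route from the paper: you argue algebraically from Proposition~\ref{prop_r1gen}, using the two weight homomorphisms $\psi,\chi\colon M\to\mathbb{Z}$ to rule out forbidden edges at the root, and a ``peel off the last prefix-generator'' argument (together with the standard facts that prefixes of right units are right units and that $a\le b$, $a\,\GreenR\,b$ force $a=b$) for uniqueness. That argument is complete and rather slick. The paper instead reads both (i) and (ii) off an explicit construction of $\sgo$, so your approach to (i) is a legitimate alternative. The ``in particular'' step is also fine.

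Part (ii), however, is where your proposal has a genuine gap: you correctly identify that one must control the folding and that a deterministic model graph plus Lemma~\ref{lemma_morphismfromschutz} would suffice, but the model graph is only described impressionistically and none of the three things it must satisfy --- determinism, every $s_{r,k}$ closing at \emph{every} vertex (including the $p_1$-, $p_2$- and $p_3$-vertices, which forces an infinite recursive attachment of further structure, not the finite picture you sketch), and the absence of vertices with both an incoming $x$-edge and an outgoing $y$-edge --- is actually verified. Worse, your guiding intuition about where the difficulty lies is misplaced. You expect the hypothesis that $R$ consists of relators of $G$ to be essential and propose $p_0$-vertices indexed by $G$; but each relator $s_{r,k}=x_{r_k}\delta_{r,k}(\delta_{r,k-1})^{-1}y_{r_{k-1}}$ is a \emph{single} word equal to $1$, and it closes up at a vertex purely locally, via a little diamond through one vertex receiving the two $\delta$-edges. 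No consistency condition around the cycle $k\mapsto k+1 \bmod |r|$, and no multiplication in $G$, is involved in building $\sgo$. (The $G$-indexed graph is the right model for $S\Gamma(w)$ later in the paper --- and note that there the vertices $u_{g,a}$ \emph{do} have an incoming $x_a$-edge and an outgoing $y_a$-edge, so that graph could not serve as a target for proving (ii) anyway.)

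The paper's proof fills exactly this gap: it takes the bouquet of relator-cycles attached at a single vertex, folds the $x$-edges and the $y$-edges, and checks by inspection of the four vertex types that the result $\Gamma'$ is already deterministic (each $\delta$-label occurs on exactly two edges, with distinct start types and distinct end vertices). Since the non-root vertices of $\Gamma'$ have no outgoing $x$-edges and no incoming $y$-edges, fresh copies of $\Gamma'$ can be glued at every non-root vertex without creating any folds, so $\sgo$ is a tree of copies of $\Gamma'$; properties (i) and (ii) are then immediate because an incoming $x$-edge occurs only at a type (b) vertex and an outgoing $y$-edge only at a type (d) vertex. If you want to complete your argument along your own lines, this finite gadget $\Gamma'$ (iterated) is the deterministic model graph you should feed into Lemma~\ref{lemma_morphismfromschutz}; the $G$-structure should be dropped entirely at this stage.
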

\begin{proof}
We consider the construction of $S\Gamma(1)$ by Stephen's procedure.

Consider first a union of cycles labelled by the defining relations of $S$, amalgamated at the root, without bi-determinising.
The vertices can be divided into four types:
\begin{itemize}
\item[(a)] the root, which has $x$-edges going out and $y$-edges coming in;
\item[(b)] vertices with an $x$-edge coming in and a $\delta$-edge going out;
\item[(c)] vertices with two $\delta$-edges coming in; note that these two edges will have labels of the form $\delta_{r,k}$ and $\delta_{r,k-1}$ (where as usual $k-1$ is interpreted modulo $|r|$) and since $R$ contains no relations of length $1$ these labels are distinct;
\item[(d)] vertices with a $\delta$-edge coming in and a $y$-edge coming out;
\end{itemize}
Determinising $x$-edges in this graph will identify various vertices of type (b), and bi-determinising $y$-edges will identify various vertices of type (d). Let $\Gamma'$ be
the graph resulting from this bi-determinisation. We claim that $\Gamma'$ is now bi-deterministic. Indeed, the only remaining thing which could fail is bi-determinism of the $\delta$-edges. However, it follows from the definition of the relations that each possible $\delta$-edge label appears only twice, once with its start at a vertex of type (b), and once with its start at a vertex of type (d).
Hence, two $\delta$-edges with the same label cannot have the same start vertex. Moreover, each of the $\delta$-edges ends at a vertex of type (c), and we have seen that two $\delta$-edges meeting at a type (c) vertex have differing labels, so two $\delta$-edges with the same label cannot have the same end vertex.

Now notice that vertices of types (b), (c) and (d) in $\Gamma'$ have no $x$-edges coming out, and no $y$-edges coming in, while the root of $\Gamma'$ has only $x$-edges coming out and $y$-edges coming in. It follows that if we attach a new copy of $\Gamma'$ at each non-root vertex of $\Gamma'$, the resulting graphs remains bi-deterministic.

Thus, by Stephen's procedure, $S\Gamma(1)$ can be constructed iteratively as a tree of copies of $\Gamma'$.
It is clear that the original root remains the only vertex which has only $x$-edges going out and $y$-edges coming in (since every other vertex is constructed as type (b), (c) or (d) and therefore has other edges incident with it). Moreover, the only way a vertex can have an $x$-edge coming in is if it is constructed as a type (b) vertex, while the only way it can have
a $y$-edge going out is if it is constructed as a type (d) vertex. Thus, no vertex has both an $x$-edge coming in and a $y$-edge coming out.

Finally, it follows from (i) that automorphisms of $\sgo$ must fix the root, and since automorphisms of bi-deterministic labelled graphs are fixed-point free, this means that the automorphism group is trivial.
\end{proof}

We now define a graph $\Omega$ which has:
\begin{itemize}
\item for each element $g \in G$, a vertex $v_g$;
\item for each element $g \in G$ and generator $a \in A$, a vertex $u_{g,a}$, an edge from $v_g$ to $u_{g,a}$ labelled $x_a$ and an edge from $u_{g,a}$ to $v_{ga}$ labelled $y_a$;
\item for each element $g \in G$ and relation $r = r_1 \dots r_{|r|} \in R$, a new vertex $t_{g,r}$, and for each $1 \leq k \leq |r|$ an edge from
$u_{gr_1 \dots r_{k-1}, r_k}$ to $t_{g,r}$ labelled $\delta_{r,k}$; and 
\item at each vertex $u_{g,a}$ and $t_{g,r}$ an attached copy of $S\Gamma(1)$.
\end{itemize}
We shall refer to the vertex $v_1$ as the \textit{root} of $\Omega$.
We shall show, eventually, that the graph $\Omega$ is isomorphic to a Sch\"utzenberger graph of $M$.

\begin{lemma}
The graph $\Omega$ is bi-deterministic.
\end{lemma}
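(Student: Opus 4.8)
The plan is to check determinism of $\Omega$ directly, vertex by vertex. Recall that $\Omega$ is built by first taking the vertices $v_g$ ($g\in G$), $u_{g,a}$ ($g\in G$, $a\in A$) and $t_{g,r}$ ($g\in G$, $r\in R$) together with the $x$-, $y$- and $\delta$-edges between them prescribed in the construction, and then gluing the root of a fresh copy of $\sgo$ onto each $u_{g,a}$ and each $t_{g,r}$; so every vertex of $\Omega$ is either some $v_g$, some $u_{g,a}$, some $t_{g,r}$, or a non-root vertex of exactly one such attached copy. Determinism means that for each vertex and each label (from $X\cup Y\cup\Delta$ and their formal inverses) at most one edge carrying that label starts there. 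I would use three facts: each attached copy of $\sgo$ is deterministic, being a Sch\"utzenberger graph; by Lemma~\ref{lemma_sgoprops}(i) the only edges incident with the root of a copy of $\sgo$ are $x$-edges going out and $y$-edges coming in; and the letters $\delta_{r,k}$ are pairwise distinct and distinct from all the $x_a$ and $y_a$ (as are the $x_a$ from each other and the $y_a$ from each other).

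The easy cases come first. A non-root vertex of an attached copy lies in no other part of $\Omega$, so determinism there is inherited from $\sgo$. At a vertex $v_g$ the only incident edges are the out-going $x_a$-edges to the vertices $u_{g,a}$ (one for each $a\in A$, with pairwise distinct labels) and the in-coming $y_a$-edges from the vertices $u_{ga^{-1},a}$ (again one for each $a$, with pairwise distinct labels); since no copy of $\sgo$ is attached at $v_g$, there is nothing else to check and determinism is clear.

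The work is at the vertices $u_{g,a}$ and $t_{g,r}$, each carrying an attached copy $C$ of $\sgo$. At $u_{g,a}$ the edges coming from the skeleton of $\Omega$ are: one in-coming $x_a$-edge from $v_g$; one out-going $y_a$-edge to $v_{ga}$; and, for each pair $(r,k)$ with $r_k=a$, exactly one out-going $\delta_{r,k}$-edge (its other endpoint being forced, since $G$ is a group). Read as edges with start vertex $u_{g,a}$ these carry the pairwise distinct labels $x_a^{-1}$, $y_a$ and the various $\delta_{r,k}$. The edges of $C$ with start vertex $u_{g,a}$, by Lemma~\ref{lemma_sgoprops}(i), carry only positive $x$-labels (the $x$-edges out of the root of $C$) and negative $y$-labels (the reverses of the $y$-edges into the root of $C$), and no $\delta$-labels at all; so no label is shared between a skeleton edge and a $C$-edge at $u_{g,a}$, and $\Omega$ is deterministic there. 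Similarly, at $t_{g,r}$ the skeleton edges are the in-coming $\delta_{r,k}$-edges for $1\le k\le|r|$, which carry pairwise distinct $\delta$-labels, and again by Lemma~\ref{lemma_sgoprops}(i) no $\delta$-label occurs at the root of the attached copy; so there is no collision at $t_{g,r}$ either. This exhausts all vertices.

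I expect the only genuinely delicate point to be exactly this last case: ensuring that the ``skeleton'' $x$-, $y$- and $\delta$-edges of $\Omega$ do not collide with the edges of the copies of $\sgo$ attached at the $u_{g,a}$ and $t_{g,r}$. This is precisely what Lemma~\ref{lemma_sgoprops}(i) is designed to control, by restricting which labels can appear at the root of $\sgo$ and in which direction; everything else is bookkeeping, using only that $G$ is a group (so the indexing of edges is consistent) and that the newly introduced letters are all distinct.
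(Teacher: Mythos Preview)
Your argument is correct and follows essentially the same approach as the paper: verify that the explicitly constructed ``skeleton'' edges are deterministic among themselves, that the attached copies of $\sgo$ are internally deterministic, and then use Lemma~\ref{lemma_sgoprops}(i) to rule out any collision at the attachment vertices $u_{g,a}$ and $t_{g,r}$. The paper compresses these checks into three terse bullet points, whereas you spell out the case analysis vertex by vertex, but the substance and the key use of Lemma~\ref{lemma_sgoprops} are identical.
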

\begin{proof}
By construction,
\begin{itemize}
\item the edges explicitly created are readily verified to have the required property;
\item \begin{sloppypar} the attached copies of $S\Gamma(1)$ are by definition internally bi-deterministic; and \end{sloppypar}
\item the vertices at which we attached copies of $S\Gamma(1)$ do not have explicitly created $x$-edges going out or $y$-edges coming in, so by Lemma~\ref{lemma_sgoprops} the attachment of $S\Gamma(1)$ at these vertices does not cause any non-determinism. 
\end{itemize}
\end{proof}

\begin{lemma}\label{lem:AutOmega}
The automorphism group of $\Omega$ is isomorphic to $G$.
\end{lemma}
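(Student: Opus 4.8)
The plan is to exhibit an explicit faithful action of $G$ on $\Omega$ by label-preserving automorphisms, and then show that this action is \emph{all} of $\mathrm{Aut}(\Omega)$. For the first part, for each $h\in G$ I would define a map $\psi_h$ of $\Omega$ by $v_g\mapsto v_{hg}$, $u_{g,a}\mapsto u_{hg,a}$, $t_{g,r}\mapsto t_{hg,r}$, and mapping the copy of $S\Gamma(1)$ attached at $u_{g,a}$ (respectively $t_{g,r}$) isomorphically onto the copy attached at $u_{hg,a}$ (respectively $t_{hg,r}$) by the canonical root-preserving identification. One checks that $\psi_h$ carries each explicitly constructed edge to an edge of $\Omega$ with the same label — this amounts to the identities $h(ga)=(hg)a$ and $h(g r_1\cdots r_{k-1})=(hg)r_1\cdots r_{k-1}$ — and it respects the attached copies by construction, so $\psi_h$ is a label-preserving automorphism of $\Omega$. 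The assignment $h\mapsto\psi_h$ is a group homomorphism with $\psi_1=\mathrm{id}$, and it is injective because $\psi_h$ sends the root $v_1$ to $v_h$. Hence $G$ embeds into $\mathrm{Aut}(\Omega)$.

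The crux of the argument is to recognise the vertices $v_g$ by an automorphism-invariant local property. I claim that a vertex $v$ of $\Omega$ has the property that \emph{every edge incident with $v$ is an $x$-edge going out of $v$ or a $y$-edge coming into $v$} if and only if $v=v_g$ for some $g\in G$. Each $v_g$ has this property, since the only edges incident with $v_g$ in $\Omega$ are the outgoing $x_a$-edges to $u_{g,a}$ and the incoming $y_a$-edges from $u_{ga^{-1},a}$. Conversely, $u_{g,a}$ has an incoming $x_a$-edge and $t_{g,r}$ has an incoming $\delta$-edge, so these vertices fail the property; and every remaining vertex of $\Omega$ is an internal (non-root) vertex of one of the attached copies of $S\Gamma(1)$, hence in $\Omega$ carries exactly the edges it carries inside that copy, and so fails the property by Lemma~\ref{lemma_sgoprops}(i). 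Thus $\{v_g:g\in G\}$ is precisely the set of vertices with this property, and is therefore preserved by every automorphism of $\Omega$. I expect this to be the main obstacle: one must take care that the roots $u_{g,a}$ and $t_{g,r}$ of the attached copies — which acquire extra incident edges once placed inside $\Omega$ — cannot masquerade as one of the $v_g$, and it is exactly here that Lemma~\ref{lemma_sgoprops}(i) is needed.

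To finish, let $\phi$ be any automorphism of $\Omega$. By the previous paragraph $\phi(v_1)=v_g$ for some $g\in G$. Now $\Omega$ is a connected deterministic inverse graph (deterministic by the preceding lemma, and connected since $A$ generates $G$), and $\phi$ and $\psi_g$ are morphisms of $\Omega$ agreeing on the vertex $v_1$, so $\phi=\psi_g$ by Lemma~\ref{lemma_uniquemap}. Therefore $\mathrm{Aut}(\Omega)=\{\psi_h:h\in G\}$, and the injective homomorphism $h\mapsto\psi_h$ is an isomorphism $G\to\mathrm{Aut}(\Omega)$. Everything except the vertex-characterisation in the second paragraph is routine given the earlier lemmas.
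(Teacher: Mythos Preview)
Your proof is correct and follows the same overall strategy as the paper: exhibit the left-multiplication action of $G$, isolate an automorphism-invariant set of vertices on which this action is simply transitive, and then invoke Lemma~\ref{lemma_uniquemap}. The only difference is in which invariant set you pick: you characterise the vertices $v_g$ via Lemma~\ref{lemma_sgoprops}(i) (only $x$-edges out and $y$-edges in), whereas the paper fixes an $a\in A$ and characterises the vertices $u_{g,a}$ as the unique vertices with an $x_a$-edge coming in and a $y_a$-edge going out, using Lemma~\ref{lemma_sgoprops}(ii). Both characterisations work for the same reason---non-root vertices of the attached copies of $S\Gamma(1)$ inherit no extra edges in $\Omega$---so this is a cosmetic variation rather than a different route.
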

\begin{proof}
It is immediate from symmetry of the definition that there is a faithful action of $G$ where $h$ acts by taking $v_g$ to $v_{hg}$, taking $u_{g,a}$ to $u_{hg,a}$, taking $t_{g,r}$ to $t_{hg,r}$, and extending in the obvious way to permute the attached copies of $S\Gamma(1)$. What remains is to show that there are no more automorphisms
of $\Omega$. Suppose, then, that $f : \Omega \to \Omega$ is an automorphism.

Fix some $a \in A$. Notice that the vertices of the form $u_{g,a}$ are the only vertices with an $x_a$-edge
coming in and a $y_a$-edge going out. Indeed, by construction none of the vertices of the form $v_g$ or $t_{g,r}$ have this property, and by
Lemma~\ref{lemma_sgoprops} the vertices in the attached copies of $\sgo$ do not have this property either. Hence, the set of vertices $\lbrace u_{g,a} \mid g \in G \rbrace$ must be
preserved by $f$. Let $h$ be such that $f(u_{1,a}) = u_{h,a}$. Now $f$ agrees with the action of $h$ on the vertex $u_{1,a}$, so by Lemma~\ref{lemma_uniquemap}
it must act the same as $h$ on the whole graph.
\end{proof}

\begin{lemma}\label{lemma_definingrelationsinomega}
Every defining relation $s_{r,k}$ can be read around a closed path at every vertex in $\Omega$.
\end{lemma}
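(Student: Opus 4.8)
The plan is to argue by cases on the vertex $v$ of $\Omega$. Fix throughout a relator $r = r_1 \cdots r_{|r|} \in R$ and an index $1 \le k \le |r|$, so that $s_{r,k} = x_{r_k}\,\delta_{r,k}\,(\delta_{r,k-1})^{-1}\,y_{r_{k-1}}$, with indices read modulo $|r|$ as in the construction.

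\emph{Vertices of the form $v_g$.} Here I would simply exhibit the closed path. Reading $s_{r,k}$ from $v_g$ one first traverses the $x_{r_k}$-edge to $u_{g,r_k}$. Writing $h = g\,(r_1\cdots r_{k-1})^{-1}$, so that $h\,r_1\cdots r_{k-1} = g$, the vertex $u_{g,r_k}$ is exactly the source $u_{h r_1 \cdots r_{k-1}, r_k}$ of the $\delta_{r,k}$-edge into $t_{h,r}$, which we traverse next; we then traverse backwards the $\delta_{r,k-1}$-edge, which runs from $u_{h r_1\cdots r_{k-2}, r_{k-1}}$ to $t_{h,r}$, landing at $u_{h r_1\cdots r_{k-2}, r_{k-1}}$; and finally we traverse the $y_{r_{k-1}}$-edge to $v_{h r_1 \cdots r_{k-2} r_{k-1}} = v_{h r_1\cdots r_{k-1}} = v_g$. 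This is a closed path at $v_g$. The cyclic conventions $r_0 = r_{|r|}$ and $\delta_{r,0} = \delta_{r,|r|}$, together with the fact that $r_1\cdots r_{|r|} = 1$ in $G$, make the boundary case $k=1$ go through verbatim.

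\emph{All other vertices.} Every vertex of $\Omega$ other than the $v_g$ lies inside one of the attached copies of $\sgo$; indeed the vertices $u_{g,a}$ and $t_{g,r}$ are precisely the roots of those copies and all remaining vertices are internal to them. Let $C$ be the copy containing $v$. Since $s_{r,k}$ is a defining relator of $M$ it represents the identity, hence an idempotent, and it is a standard fact about Schützenberger graphs -- which also follows from Proposition~\ref{prop_greenschutz} together with the observation that a right divisor of a right unit is again a right unit -- that a defining relator labels a closed path from every vertex of $\sgo$. Thus $s_{r,k}$ labels a closed path $\pi$ in $C$ from $v$. It then suffices to note that in $\Omega$ the reading of $s_{r,k}$ from $v$ cannot leave $C$: since $\Omega$ is deterministic (proved above) and $C$ is a subgraph of $\Omega$, at each step the unique edge of $\Omega$ of the prescribed label and direction at the current vertex is the edge used by $\pi$, so the reading of $s_{r,k}$ from $v$ in $\Omega$ is precisely $\pi$ and in particular closes up at $v$.

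The only point needing any care is the $v_g$ case, and there the sole subtlety is keeping the index arithmetic modulo $|r|$ straight (in particular the case $k=1$); everything else is bookkeeping against the defining data of $\Omega$. The second case is essentially formal, the one thing worth stating explicitly being that the attached copies of $\sgo$ sit in $\Omega$ as subgraphs and that $\Omega$ is deterministic, so no edge outside a given copy can be traversed when reading a word that already closes up inside it. I anticipate no real obstacle beyond these routine verifications.
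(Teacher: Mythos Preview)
Your proof is correct and follows essentially the same approach as the paper: an explicit closed path for the vertices $v_g$, and for all other vertices the observation that they lie in an attached copy of $\sgo$, within which each defining relator labels a closed path at every vertex. Your treatment of the non-$v_g$ case is in fact more careful than the paper's (which speaks only of roots of attached copies), and your closing determinism argument, while correct, is not strictly needed since the lemma only asks for \emph{some} closed path labelled $s_{r,k}$.
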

\begin{proof}
Every vertex except those of the form $v_g$ by definition lies at the root of an attached copy of $S\Gamma(1)$, and hence certainly has the claimed property. For those of the form $v_g$, we can let $s = r_1 \dots r_{k-1}$ where $r = r_1 \dots r_{|r|}$ and
now we have a closed path
$$v_g = v_{(gs^{-1})r_1 \dots r_{k-1}} \xrightarrow{x_{r_k}} u_{g, r_k} \xrightarrow{\delta_{r,k}} t_{gs^{-1},r} \xrightarrow{(\delta_{r,k-1})^{-1}} u_{gs^{-1} r_1 \dots r_{k-2}, r_{k-1}}\xrightarrow{y_k} v_g.$$
(Figure~\ref{fig:SGammaW2} below illustrates a possible example.)
\end{proof}

Now let $W$ be the set of all words over $A$ of length $4$ or less, and define
$$w = \prod_{x \in W}^n (\overline{x}) (\overline{x})^{-1} \in M$$
noting that the order of the product is unimportant because the factors are idempotent, and therefore commute.
Our aim is to show that the Sch\"utzenberger graph $S\Gamma(w)$ is isomorphic to $\Omega$. 

Before proving 
$S\Gamma(w)$ is isomorphic to $\Omega$
we give an example to illustrate the result. 

\begin{example}\label{Ex:R1ConstructionExample} 
Let $G$ be the finite cycle group $\mathbb{Z}_3$ or order three.   
Following the construction outlined above we first write  
a special monoid presentation $\langle A \mid R \rangle$ for $G$ where $A = G \setminus 
\lbrace 1 \rbrace$ and $R$ consists of all 2-letter and 3-letter
words equal to $1$ in $G$. 
So we have 
\[
\langle A \mid R \rangle =
\langle a,b \mid aaa=1, bbb=1, ab=1, ba=1 \rangle.
\]
Hence $R = \{r,t,u,v \}$ where $r=aaa, t=bbb, u=ab, v=ba$. 
Then for this example the definitions above give 
\[
X = \{ x_a, x_b \}, \quad 
Y = \{ y_a, y_b \}, 
\]
and 
\[
\Delta = \{ 
\delta_{r,1}, \delta_{r,2}, \delta_{r,3}, 
\delta_{t,1}, \delta_{t,2}, \delta_{t,3}, 
\delta_{u,1}, \delta_{u,2}, 
\delta_{v,1}, \delta_{v,2}   
\}.
\]
Then $M$ is the special inverse monoid with generating set $X \cup Y \cup \Delta$ and  
the following 
four-letter relations
\[\lbrace s_{q,k} \mid q \in R, 1 \leq k \leq |q| \rbrace\]
which for this example gives the following ten words   
\begin{align*}
s_{r,1}&= x_{r_1} \delta_{r,1} (\delta_{r,3})^{-1} y_{r_{3}}   & 
s_{r,2}&= x_{r_2} \delta_{r,2} (\delta_{r,1})^{-1} y_{r_{1}}   \\
s_{r,3}&= x_{r_3} \delta_{r,3} (\delta_{r,2})^{-1} y_{r_{2}}   & 
s_{t,1}&= x_{t_1} \delta_{t,1} (\delta_{t,3})^{-1} y_{t_{3}}   \\
s_{t,2}&= x_{t_2} \delta_{t,2} (\delta_{t,1})^{-1} y_{t_{1}}   & 
s_{t,3}&= x_{t_3} \delta_{t,3} (\delta_{t,2})^{-1} y_{t_{2}}   \\
s_{u,1}&= x_{u_1} \delta_{u,1} (\delta_{u,2})^{-1} y_{u_{2}}   & 
s_{u,2}&= x_{u_2} \delta_{u,2} (\delta_{u,1})^{-1} y_{u_{1}}   \\
s_{v,1}&= x_{v_1} \delta_{v,1} (\delta_{v,2})^{-1} y_{v_{2}}  & 
s_{v,2}&= x_{v_2} \delta_{v,2} (\delta_{v,1})^{-1} y_{v_{1}}.    
\end{align*}
Here $r_2$ denotes the second letter in the word $r \equiv aaa$ which is $a$, so $x_{r_2} = x_a$, and similarly for the other symbols in the equations above.  
Now, as above, let $W$ be the set of all words over $A$ of length $4$ or less, and define
\[w = \prod_{x \in W}^n (\overline{x}) (\overline{x})^{-1} \in M.\]
Below we shall prove that that the Sch\"utzenberger graph $S\Gamma(w)$ is isomorphic to the graph $\Omega$ defined above. Then by applying Lemma~\ref{lem:AutOmega} it will follow that the automorphism group of $S\Gamma(w)$ is isomorphic to the group we started with, in this case the cyclic group of order three.

A key part of the proof will be that for every relator word $abc$ in the presentation of $M$ we can read the word $\overline{abc} = x_a y_a x_b y_b x_c y_c$ in $S\Gamma(w)$ from every vertex.   
To illustrate the idea for this example, consider the relation $aaa=1$ in the presentation of $G$, that is, the relation word $r$. 
In the presentation of the special inverse monoid $M$ we have the following three relator words 
\begin{align*}
s_{r,1}&= x_{a} \delta_{r,1} (\delta_{r,3})^{-1} y_{a},   & 
s_{r,2}&= x_{a} \delta_{r,2} (\delta_{r,1})^{-1} y_{a},   &
s_{r,3}&= x_{a} \delta_{r,3} (\delta_{r,2})^{-1} y_{a}.   &    
\end{align*}
By definition $aaa \in W$ and so $w$ is a product of idempotent words
with one of the factors of that product being  
$
(x_a y_a x_a y_a x_a y_a)(x_a y_a x_a y_a x_a y_a)^{-1}
$. 
In particular this means that in $S\Gamma(w)$ the word 
$x_a y_a x_a y_a x_a y_a$ 
can be read from the root vertex $ww^{-1}$. 
This is a key difference with $S\Gamma(1)$ where the word 
 $x_a y_a x_a y_a x_a y_a$ 
cannot be read from the root. 
Now if we begin with the path labelled by this word and carry out several steps of Stephen's procedure  
we obtain the graphs given in 
Figure~\ref{fig:SGammaW1}
and
Figure~\ref{fig:SGammaW2}.
We begin by attaching the relator words $s_{r,1}$,
$s_{r,2}$ 
and
$s_{r,3}$ to the path 
$x_a y_a x_a y_a x_a y_a$ as in the proof of Lemma~\ref{lemma_pathtodeltar} below to obtain the first graph in  
Figure~\ref{fig:SGammaW1}. 
Then partially bi-determinising the first graph leads to the second graph in 
Figure~\ref{fig:SGammaW1}. 
Then identifying the two edges labelled $\delta_{r,3}$ gives the graph in 
Figure~\ref{fig:SGammaW2}. 
In this graph $(x_a y_a)^3$ can be read around a closed circuit based at the origin, so the same is true in $S\Gamma(w)$.    
The general proof below that $S\Gamma(w)$ is isomorphic to $\Omega$ uses this idea applied to all the relations in the presentation based at every vertex in the Sch\"utzenberger graph.
\end{example}
\begin{figure} 
%
%
\begin{center}
\begin{tikzpicture}[scale=0.8]
\tikzstyle{lightnode}=[circle, draw, fill=black!20,
                        inner sep=0pt, minimum width=4pt]

\tikzstyle{darknode}=[circle, draw, fill=black!90,
                        inner sep=0pt, minimum width=4pt]
\node[darknode] (A) at (0, 0) {};
\node[lightnode] (B) at (2, 0) {};
\node[lightnode] (C) at (4, 0) {};
\node[lightnode] (D) at (6, 0) {};
\node[lightnode] (E) at (8, 0) {};
\node[lightnode] (F) at (10, 0) {};
\node[lightnode] (G) at (12, 0) {};
\node[lightnode] (X) at (0, 4) {};
\node[lightnode] (X1) at (-1, 2) {};
\node[lightnode] (X2) at (1, 2) {};
\node[lightnode] (Y) at (0+4, 4) {};
\node[lightnode] (Y1) at (-1+4, 2) {};
\node[lightnode] (Y2) at (1+4, 2) {};
\node[lightnode] (Z) at (0+8, 4) {};
\node[lightnode] (Z1) at (-1+8, 2) {};
\node[lightnode] (Z2) at (1+8, 2) {};
\draw[->-=0.6, thick] (A) -- (X2) node[midway,right] {$x_a$};
\draw[->-=0.6, thick] (X2) -- (X) node[midway,right] {$\delta_{r,1}$};
\draw[->-=0.6, thick] (X1) -- (X) node[midway,left] {$\delta_{r,3}$};
\draw[->-=0.6, thick] (X1) -- (A) node[midway,left] {$y_a$};
\draw[->-=0.6, thick] (C) -- (Y2) node[midway,right] {$x_a$};
\draw[->-=0.6, thick] (Y2) -- (Y) node[midway,right] {$\delta_{r,2}$};
\draw[->-=0.6, thick] (Y1) -- (Y) node[midway,left] {$\delta_{r,1}$};
\draw[->-=0.6, thick] (Y1) -- (C) node[midway,left] {$y_a$};
\draw[->-=0.6, thick] (E) -- (Z2) node[midway,right] {$x_a$};
\draw[->-=0.6, thick] (Z2) -- (Z) node[midway,right] {$\delta_{r,3}$};
\draw[->-=0.6, thick] (Z1) -- (Z) node[midway,left] {$\delta_{r,2}$};
\draw[->-=0.6, thick] (Z1) -- (E) node[midway,left] {$y_a$};
\draw[->-=0.6, thick] (A) -- (B) node[midway,below] {$x_a$};
\draw[->-=0.6, thick] (B) -- (C) node[midway,below] {$y_a$};
\draw[->-=0.6, thick] (C) -- (D) node[midway,below] {$x_a$};
\draw[->-=0.6, thick] (D) -- (E) node[midway,below] {$y_a$};
\draw[->-=0.6, thick] (E) -- (F) node[midway,below] {$x_a$};
\draw[->-=0.6, thick] (F) -- (G) node[midway,below] {$y_a$};
\end{tikzpicture}
\end{center}

\

%
%
\begin{center}
\begin{tikzpicture}[scale=0.8]
\tikzstyle{lightnode}=[circle, draw, fill=black!20,
                        inner sep=0pt, minimum width=4pt]

\tikzstyle{darknode}=[circle, draw, fill=black!90,
                        inner sep=0pt, minimum width=4pt]
\node[darknode] (A) at (0, 0) {};
\node[lightnode] (B) at (2, 0) {};
\node[lightnode] (C) at (4, 0) {};
\node[lightnode] (D) at (6, 0) {};
\node[lightnode] (E) at (8, 0) {};
\node[lightnode] (F) at (10, 0) {};
\node[lightnode] (G) at (12, 0) {};
\node[lightnode] (X1) at (-2, 0) {};
\node[lightnode] (Y) at (0+4, 4) {};
\draw[->-=0.6, thick] (X1) -- (Y) node[midway,above] {$\delta_{r,3}$};
\draw[->-=0.6, thick] (B) -- (Y) node[midway,right] {$\delta_{r,1}$};
\draw[->-=0.6, thick] (D) -- (Y) node[midway,right] {$\delta_{r,2}$};
\draw[->-=0.6, thick] (F) -- (Y) node[midway,above] {$\delta_{r,3}$};
\draw[->-=0.6, thick] (X1) -- (A) node[midway,below] {$y_a$};
\draw[->-=0.6, thick] (A) -- (B) node[midway,below] {$x_a$};
\draw[->-=0.6, thick] (B) -- (C) node[midway,below] {$y_a$};
\draw[->-=0.6, thick] (C) -- (D) node[midway,below] {$x_a$};
\draw[->-=0.6, thick] (D) -- (E) node[midway,below] {$y_a$};
\draw[->-=0.6, thick] (E) -- (F) node[midway,below] {$x_a$};
\draw[->-=0.6, thick] (F) -- (G) node[midway,below] {$y_a$};
\end{tikzpicture}
\end{center}
\caption{
Two steps in the construction of $S\Gamma(ww^{-1})$ in  Example~\ref{Ex:R1ConstructionExample}.
}\label{fig:SGammaW1}
\end{figure}

\begin{figure} 
%
%
\begin{center}
\begin{tikzpicture}[scale=0.8]
\tikzstyle{lightnode}=[circle, draw, fill=black!20,
                        inner sep=0pt, minimum width=4pt]

\tikzstyle{darknode}=[circle, draw, fill=black!90,
                        inner sep=0pt, minimum width=4pt]
\node[darknode] (A) at (0, 0) {};
\node[lightnode] (B) at (3, 0) {};
\node[lightnode] (C) at (6, 0) {};
\node[lightnode] (D) at (4.5, 3) {};
\node[lightnode] (E) at (3, 6) {};
\node[lightnode] (F) at (1.5, 3) {};
\node[lightnode] (M) at (3, 1.7) {};
\draw[->-=0.6, thick] (B) -- (M) node[midway,right] {$\delta_{r,1}$};
\draw[->-=0.6, thick] (D) -- (M) node[midway,below] { \; $\delta_{r,2}$};
\draw[->-=0.6, thick] (F) -- (M) node[midway,below] {$\delta_{r,3}$};
\draw[->-=0.6, thick] (A) -- (B) node[midway,below] {$x_a$};
\draw[->-=0.6, thick] (B) -- (C) node[midway,below] {$y_a$};
\draw[->-=0.6, thick] (C) -- (D) node[midway,right] {$x_a$};
\draw[->-=0.6, thick] (D) -- (E) node[midway,right] {$y_a$};
\draw[->-=0.6, thick] (E) -- (F) node[midway,left] {$x_a$};
\draw[->-=0.6, thick] (F) -- (A) node[midway,left] {$y_a$};
\end{tikzpicture}
\caption{Illustrates both the third step in the construction of $S\Gamma(ww^{-1})$ in Example~\ref{Ex:R1ConstructionExample}, and also an example case of Lemma~\ref{lemma_definingrelationsinomega}.}
\label{fig:SGammaW2}
\end{center}
\end{figure}

Returning to the general proof, we shall prove that  
$S\Gamma(w)$ is isomorphic to $\Omega$
by showing that there are morphisms (preserving the
root) between these graphs in both directions.

\begin{lemma}\label{lemma_morphismtoomega}
There is a morphism of labelled directed graphs from $S\Gamma(w)$ to $\Omega$, mapping the root to the root.
\end{lemma}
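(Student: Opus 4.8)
The plan is to apply Lemma~\ref{lemma_morphismfromschutz}. That lemma says precisely that if $\Omega$ is a rooted deterministic $X$-labelled graph from whose root the word $w$ can be read, and every defining relation of $M$ can be read around a closed path at every vertex of $\Omega$, then there is a morphism $S\Gamma(w) \to \Omega$ taking root to root. We have already established two of the three hypotheses: $\Omega$ is deterministic (the lemma proved above), and each $s_{r,k}$ can be read around a closed path at every vertex of $\Omega$ (the preceding lemma). So the only thing left to check is that $w$ can be read from the root $v_1$ of $\Omega$.

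First I would recall that $w = \prod_{x \in W} (\overline{x})(\overline{x})^{-1}$, a product of idempotent-looking factors, where $W$ is the set of words over $A$ of length at most $4$ and $\overline{x} = \overline{x_1}\cdots\overline{x_{|x|}}$ with $\overline{a} = x_a y_a$. So it suffices to show that each individual factor $(\overline{x})(\overline{x})^{-1}$ can be read as a closed path at $v_1$ in $\Omega$; reading the factors one after another (each returning to $v_1$) then reads all of $w$. In turn it suffices to show that for each $x = a_1 \cdots a_m \in W$ (with $m \leq 4$), the word $\overline{x} = x_{a_1} y_{a_1} x_{a_2} y_{a_2} \cdots x_{a_m} y_{a_m}$ can be read as a path from $v_1$; then $(\overline{x})^{-1}$ traces the same path back, so the concatenation is a closed path at $v_1$. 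But this is immediate from the construction of $\Omega$: starting at $v_1$, the edge labelled $x_{a_1}$ goes to $u_{1,a_1}$, then the edge labelled $y_{a_1}$ goes to $v_{a_1}$; from $v_{a_1}$ the edge $x_{a_2}$ goes to $u_{a_1,a_2}$ and $y_{a_2}$ goes to $v_{a_1a_2}$; inductively, reading $\overline{x}$ from $v_1$ ends at $v_{a_1 \cdots a_m}$, so in particular the whole word $\overline{x}$ is readable from $v_1$.

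Having verified all three hypotheses of Lemma~\ref{lemma_morphismfromschutz} for $\Omega$ with root $v_1$ and the word $w$, that lemma directly yields a morphism of labelled directed graphs $S\Gamma(w) \to \Omega$ sending the root of $S\Gamma(w)$ to $v_1$, which is the root of $\Omega$. This completes the proof.

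I do not anticipate a genuine obstacle here: the work has already been done in setting up $\Omega$ and proving its determinism and the closed-path property, so this lemma is essentially a bookkeeping step combining those facts with Lemma~\ref{lemma_morphismfromschutz}. The only mild care needed is to spell out why $w$ specifically (not just its individual letters) is readable from $v_1$, which amounts to the observation that reading any prefix $\overline{x}$ of a factor lands at a vertex of the form $v_g$, from which the next $x$-edge is again available, so nothing gets stuck.

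\begin{proof}
We apply Lemma~\ref{lemma_morphismfromschutz} with the rooted graph $\Omega$ (rooted at $v_1$) and the word $w$. We have already shown that $\Omega$ is deterministic, and that every defining relation $s_{r,k}$ of $M$ can be read around a closed path at every vertex of $\Omega$. It therefore remains only to check that $w$ can be read from the root $v_1$.

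Recall that $w = \prod_{x \in W} (\overline{x})(\overline{x})^{-1}$ where $W$ is the set of words over $A$ of length at most $4$. It suffices to show that each factor $(\overline{x})(\overline{x})^{-1}$ labels a closed path at $v_1$, since then reading these factors consecutively (each beginning and ending at $v_1$) reads all of $w$ from $v_1$. In turn, it suffices to show that, for each $x = a_1 \cdots a_m \in W$, the word $\overline{x} = x_{a_1} y_{a_1} \cdots x_{a_m} y_{a_m}$ labels a path starting at $v_1$: the reverse path then reads $(\overline{x})^{-1}$, so the concatenation is a closed path at $v_1$.

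This last claim is immediate from the construction of $\Omega$. Reading from $v_1$, the edge labelled $x_{a_1}$ leads to $u_{1, a_1}$ and the edge labelled $y_{a_1}$ leads to $v_{a_1}$; inductively, after reading the prefix $x_{a_1} y_{a_1} \cdots x_{a_j} y_{a_j}$ of $\overline{x}$ we are at the vertex $v_{a_1 \cdots a_j}$, from which the edge labelled $x_{a_{j+1}}$ leads to $u_{a_1 \cdots a_j, a_{j+1}}$ and the edge labelled $y_{a_{j+1}}$ leads to $v_{a_1 \cdots a_{j+1}}$. Hence $\overline{x}$ labels a path from $v_1$ (ending at $v_{a_1 \cdots a_m}$), as required.

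All three hypotheses of Lemma~\ref{lemma_morphismfromschutz} are thus satisfied, and that lemma yields a morphism of labelled directed graphs from $S\Gamma(w)$ to $\Omega$ taking the root of $S\Gamma(w)$ to the root $v_1$ of $\Omega$.
\end{proof}
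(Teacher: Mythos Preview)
Your proof is correct and follows exactly the same approach as the paper: verify the three hypotheses of Lemma~\ref{lemma_morphismfromschutz} (determinism of $\Omega$, readability of $w$ from the root, and readability of each relator around a closed path at every vertex) and invoke that lemma. The paper's version is terser, merely asserting (ii) with the parenthetical ``since every word of the form $\overline{x}$ for $x \in A^*$ can be'', whereas you spell out the inductive path through the vertices $v_g$ and $u_{g,a}$, but the substance is identical.
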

\begin{proof}
We know that
\begin{itemize}
\item[(i)] $\Omega$ is bi-deterministic;
\item[(ii)] $w$ can be read from the root in $\Omega$ (since every word of the form $\overline{x}$ for $x \in A^*$ can be); and
\item[(iii)] by Lemma~\ref{lemma_definingrelationsinomega}, every defining relation can be read around a closed path at every vertex in $\Omega$.
\end{itemize}
The result now follows from Lemma~\ref{lemma_morphismfromschutz}.
\end{proof}

We now proceed to show that there is a morphism in the other direction.

For each relation $r = abc$ let $\Omega_r$ be the subgraph of $\Omega$ consisting of $v_1$, $v_a$, $v_{ab}$, $u_{1,a}$, $u_{a,b}$, $u_{ab,c}$, $t_{1,r}$ and all the edges between them. Similarly for $r = ab$ let $\Omega_r$ be the subgraph of $\Omega$ consisting of $v_1$, $v_a$, $u_{1,a}$, $u_{a,b}$ and $t_{1,r}$ and edges
between them.

\begin{lemma}\label{lemma_pathtodeltar}
With $M$ and $w$ as above, if $r = abc$ [respectively, $r = ab$] is a relation in $R$ and the word $\overline{abc} = x_a y_a x_b y_b x_c y_c$ [respectively, $\overline{ab} = x_a y_a x_b y_b$] is readable at some vertex $v$ of $S\Gamma(w)$, then there is a morphism from $\Omega_r$ to $S\Gamma(w)$ taking the root of $\Omega_r$ to $v$.
\end{lemma}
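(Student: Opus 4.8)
The plan is to build the required morphism from $\Omega_r$ to $S\Gamma(w)$ edge-by-edge, following the path that witnesses the readability of $\overline{r}$ at $v$, and then use the defining relations of $M$ to fold the two ends of that path onto the image of the single vertex $t_{1,r}$. Concretely, suppose $r = abc$ and the word $x_a y_a x_b y_b x_c y_c$ is readable at $v$ in $S\Gamma(w)$. Reading this word from $v$ produces vertices which I will tentatively use as the images of $v_1 \mapsto v$, then $u_{1,a}$, $v_a$, $u_{a,b}$, $v_{ab}$, $u_{ab,c}$, and finally back to $v$ (since $r=abc=1$ in $G$, so $\overline{r}$ should also be readable around a closed-ish path — but I need to be careful here, as a priori the path in $S\Gamma(w)$ need not close up). So the first key step is to check that the path labelled $\overline{r}$ starting at $v$ actually returns to $v$: this should follow because $\overline{r}$ represents an element mapping to $1$ in $G$ and hence, via $R_1$-injectivity together with the structure established in Lemma~\ref{lemma_sgoprops} and the fact that $v$ lies in a block, the path is closed. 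Actually the cleaner route: since $w$ is a product of terms $\overline{x}(\overline{x})^{-1}$ over all short words $x$, and $abc$ has length $\le 3 < 4$, the word $\overline{abc}\,(\overline{abc})^{-1}$ is readable from the root, so $\overline{abc}$ is a right unit readable from the root; combined with $R_1$-injectivity (condition (vi) of Proposition~\ref{roiequiv}) and the fact that $\overline{abc}$ maps to $1$ in $M/\sigma$ — wait, it maps to $1$ in $G$, and $M/\sigma$ is the group with the monoid presentation, so I need $\overline{abc}$ to map to $1$ there too, which holds because the relations $s_{r,k}$ force $x_{r_k}\delta_{r,k}\delta_{r,k-1}^{-1}y_{r_{k-1}} = 1$, whose product around $r$ telescopes to give $\overline{r}=1$ in $M/\sigma$. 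Hence any path labelled $\overline{abc}$ in $S\Gamma(1)$ is closed, and by Lemma~\ref{sgoembeds} the same holds starting from $v$ in $S\Gamma(w)$.

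Once the path closes up, I define a candidate vertex map on the six/seven vertices of $\Omega_r$ as above, sending $t_{1,r}$ to the vertex reached from $u_{1,a}$ by reading $\delta_{r,1}$ in $S\Gamma(w)$ — which is legitimate because $x_{r_1}\delta_{r,1}$ is a prefix of the defining relator $s_{r,1}$ and hence readable wherever $x_{r_1}$ is readable. The second key step is to verify that the three (or two) $\delta$-edges of $\Omega_r$, namely those labelled $\delta_{r,1}$, $\delta_{r,2}$, $\delta_{r,3}$ out of $u_{1,a}$, $u_{a,b}$, $u_{ab,c}$, all land on the same vertex in $S\Gamma(w)$, so that $t_{1,r}$ has a well-defined image. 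For this I use the relations $s_{r,k} = x_{r_k}\delta_{r,k}\delta_{r,k-1}^{-1}y_{r_{k-1}} = 1$: since every relator can be read around a closed path at every vertex of $S\Gamma(w)$, reading $s_{r,k}$ from the appropriate vertex shows that the $\delta_{r,k}$-edge out of $u_{r_1\cdots r_{k-1},r_k}$ and the $\delta_{r,k-1}$-edge out of $u_{r_1\cdots r_{k-2},r_{k-1}}$ meet at a common vertex; chaining these identifications around $k=1,\dots,|r|$ collapses all the $\delta$-edge endpoints to one vertex, which we take to be the image of $t_{1,r}$. I must also check $\delta$-determinism isn't violated — but $S\Gamma(w)$ is deterministic by construction, so this is automatic; the real content is just that the target vertex is the same for each $k$. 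Then the remaining edges of $\Omega_r$ (the $x_a$-, $y_a$-, etc. edges) map correctly by construction of the vertex map.

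I expect the main obstacle to be the bookkeeping in the closing-up argument and the $\delta$-edge coincidence: one has to be careful that "readable at $v$" for $\overline{r}$ does not by itself give that the $\delta_{r,k}$-edges exist at the relevant vertices — it is the defining relations $s_{r,k}$, together with the fact (from Stephen's procedure) that relators can be read as closed paths at every vertex of any Sch\"utzenberger graph, that supply those edges and force them to fold together correctly. A secondary subtlety is that the vertices $u_{1,a}, u_{a,b}, u_{ab,c}$ must receive \emph{distinct} images (otherwise $\Omega_r \to S\Gamma(w)$ would not be the kind of morphism we want for the later isomorphism argument) — but in fact for a \emph{morphism} of labelled graphs we do not need injectivity, only well-definedness, so this is not actually required at this stage; it will be extracted later from composing with the morphism in the other direction and invoking Lemma~\ref{lemma_uniquemap}. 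So the proof reduces to: (1) read $\overline{r}$ from $v$ and confirm the path is closed using $R_1$-injectivity plus the fact that $\overline{r}=1$ in $M/\sigma$; (2) use the relators $s_{r,k}$ as closed paths to pin down a single common image for $t_{1,r}$; (3) observe that all edges of $\Omega_r$ are then respected. The routine verifications of (3) I would leave to the reader.
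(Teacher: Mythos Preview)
Your Step~2 is exactly the paper's argument: read each relator $s_{r,k}$ as a closed cycle at the appropriate vertex and use determinism (in both directions) to force all the $\delta$-edge endpoints to coincide at a single vertex, which becomes the image of $t_{1,r}$. That is the whole proof, and you have it.

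Your Step~1, however, is both unnecessary and circular. It is unnecessary because the closing-up of the path labelled $\overline{r}$ is a \emph{consequence} of Step~2, not a prerequisite: once you know $t_1 = t_2 = t_3$, reading the closed cycle $s_{r,1} = x_a \delta_{r,1} \delta_{r,|r|}^{-1} y_c$ at $v$ and using in-determinism of the $\delta_{r,|r|}$-edge forces the penultimate vertex to be $u_{ab,c}'$, whence the final $y_c$-edge lands back at $v$. It is circular because you invoke $\GreenR_1$-injectivity of $M$ (via condition~(vi) of Proposition~\ref{roiequiv}), but in the paper's logical order the $\GreenR_1$-injectivity of this particular monoid $M$ is only established \emph{after} the present lemma --- it is the final lemma of the section, and the intervening lemmas (including this one) are used to identify $S\Gamma(w)$ with $\Omega$ first. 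So drop Step~1 entirely; your Step~2 already delivers everything, including the edge $u_{ab,c} \to v_1$.
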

\begin{proof}
We prove the case for $r$ of length $3$, the length $2$ case being very similar.
Let $v_{1}' = v$, and let $u_{1,a}'$, $v_a'$, $u_{a,b}'$, $v_{ab}'$ and $u_{ab,c}'$ be the vertices reached in $S\Gamma(w)$ on reading $x_a$, $x_a y_a$, $x_a y_a x_b$, $x_a y_a x_b y_b$
and $x_a y_a x_b y_b x_c$ respectively from $v$. Certainly we can read the relators $s_{r,1}$, $s_{r,2}$ and $s_{r,3}$ around closed cycles at $v$, $v_a'$ and $v_{ab}'$ respectively. The fact that the $S\Gamma(w)$ is bi-deterministic means that the vertices reached after reading the first two letters of each of these cycles must be the
same;
call this vertex $t_{1,r'}$. Now it is easy to verify that the map taking each vertex $x$ of $\Omega_r$ to the vertex we have
designated as $x'$ in $S\Gamma(w)$ must be a morphism.
\end{proof}

We now introduce some more notation. Let $z_1$ be the root vertex of $S\Gamma(w)$, and for each $g \in A = G \setminus \lbrace 1 \rbrace$ let $z_g$ denote the vertex in $S\Gamma(w)$ reached by reading $x_g y_g$ from the root.
\begin{lemma}\label{lemma_pathsgowheretheyshould}
Any path in $S\Gamma(w)$ starting at the root and with label of the form $\overline{a_1 \dots a_n}$ where $a_1 \dots a_n = g$ in $G$ ends at $z_g$.
\end{lemma}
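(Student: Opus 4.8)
The plan is to argue by induction on $n$, the number of letters $a_1, \dots, a_n \in A$, using at each step the relations $s_{r,k}$ coming from the $2$- and $3$-letter relators of the special monoid presentation of $G$. The base case $n \le 1$ is immediate: a path labelled by the empty word ends at the root $z_1$, and a path labelled $\overline{a} = x_a y_a$ from the root ends at $z_a$ by the very definition of the vertices $z_g$.

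For the inductive step, suppose the claim holds for all shorter products. Given a path from the root labelled $\overline{a_1 \dots a_n}$ with $a_1 \cdots a_n = g$ in $G$, I would consider the partial products and look for a place where two consecutive letters (or a single letter) can be collapsed using a relator of $R$. More precisely, since $R$ consists of \emph{all} $2$- and $3$-letter words equal to $1$ in $G$, for any two adjacent letters $a_i, a_{i+1}$ with $a_i a_{i+1} \ne 1$ in $G$, the single letter $c = a_i a_{i+1}$ lies in $A$ and $a_i a_{i+1} c^{-1} = 1$, so $r = a_i a_{i+1} c^{-1}$ is a $3$-letter relator in $R$; if $a_i a_{i+1} = 1$ then $r = a_i a_{i+1}$ is a $2$-letter relator. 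The key point is that, by Lemma~\ref{lemma_pathtodeltar}, the word $\overline{r}$ is readable at the appropriate intermediate vertex of our path (it is, since $\overline{a_i}\,\overline{a_{i+1}}$ is), so there is a morphism from $\Omega_r$ into $S\Gamma(w)$ taking the root of $\Omega_r$ to that vertex. Inside $\Omega_r$ one has the path labelled $\overline{r}$ closing up through the vertex $t_{1,r}$, and also a direct path labelled $\overline{c}$ (or the trivial path, in the $2$-letter case) from the root of $\Omega_r$ to the other distinguished $v$-vertex. Since $S\Gamma(w)$ is deterministic, pushing this relation forward shows that the vertex reached by reading $\overline{a_i}\,\overline{a_{i+1}}$ from the intermediate vertex coincides with the vertex reached by reading $\overline{c}$ (respectively with the intermediate vertex itself). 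Hence the original path has the same endpoint as a path labelled $\overline{a_1 \cdots a_{i-1} c\, a_{i+2} \cdots a_n}$ (respectively $\overline{a_1 \cdots a_{i-1} a_{i+2} \cdots a_n}$) starting at the root; this new label is shorter, represents the same element $g$ of $G$, and so by induction ends at $z_g$.

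The only subtlety — and the step I expect to be the main obstacle — is getting the bookkeeping of Lemma~\ref{lemma_pathtodeltar} to line up: one must verify that when we want to apply that lemma to a relator $r = a_i a_{i+1} c^{-1}$, the hypothesis ``$\overline{r}$ is readable at the relevant vertex'' genuinely holds, which requires knowing that not just $\overline{a_i}\,\overline{a_{i+1}}$ but also $\overline{c^{-1}} = y_c^{-1} x_c^{-1}$ can be read off from there. This is where one uses that the prefixes $x_c$ (hence $x_c y_c$, hence their inverses along a closed relator cycle) are right units and so readable everywhere, exactly as in the proof of Lemma~\ref{sgoembeds}; alternatively one reads the relator cycle of $s_{r,k}$ directly, which is always available since every relator can be traversed at every vertex. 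Once that is pinned down, everything else is the routine determinism argument sketched above, and the induction closes.
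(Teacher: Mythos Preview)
Your inductive strategy is natural, but the step you correctly flag as ``the main obstacle'' is a genuine gap, and neither of your proposed fixes works. To apply Lemma~\ref{lemma_pathtodeltar} at an intermediate vertex $v'$ you need the \emph{full} word $\overline{r}$ readable there, not just $\overline{a_i}\,\overline{a_{i+1}}$. Note first a notational slip: here $c^{-1}$ denotes the element of $A = G \setminus \{1\}$ inverse to $c$, so $\overline{c^{-1}} = x_{c^{-1}} y_{c^{-1}}$, not $y_c^{-1} x_c^{-1}$ as you write. More importantly, words of the form $x_a y_a$ are \emph{not} right units of $M$: the defining relators are $s_{r,k} = x_{r_k} \delta_{r,k} \delta_{r,k-1}^{-1} y_{r_{k-1}}$, so no proper prefix has the shape $x_a y_a$, and the analysis in Lemma~\ref{lemma_sgoprops} shows that in $S\Gamma(1)$ the vertex reached from the root by $x_a$ has only $\delta$-edges and $x$-edges going out, never a $y_a$-edge. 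Reading relator cycles directly only produces paths labelled $x_a \delta \delta^{-1} y_b$, which does not help either. (There is also no ``direct path labelled $\overline{c}$'' inside $\Omega_r$ for $r = a_i a_{i+1} c^{-1}$: the only $x$-edges present there are labelled $x_{a_i}, x_{a_{i+1}}, x_{c^{-1}}$.) So at a generic intermediate vertex there is no mechanism to guarantee that the third leg of $\overline{r}$, or the word $\overline{c}$, can be read at all.

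The paper's argument sidesteps this by organising the induction differently: it applies the inductive hypothesis to the \emph{entire} prefix $a_1 \dots a_k$, which lands at $z_h$ with $h = a_1 \cdots a_k$, and uses that $z_h$ is by definition the endpoint of the one-letter path $\overline{h}$ from the root. The original path therefore has the same endpoint as the path from the \emph{root} labelled $\overline{h}\,\overline{a_{k+1}}$, and one now works entirely at the root. The crucial point is that readability at the root is built into the choice of $w$: since $w = \prod_{x \in W} \overline{x}(\overline{x})^{-1}$ with $W$ all words of length at most $4$, the words $\overline{h a_{k+1} g^{-1}}$ and $\overline{g g^{-1}}$ are automatically readable from the root, and Lemma~\ref{lemma_pathtodeltar} applies there without any difficulty. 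Two applications (one for each relator) plus determinism then give $v = z_g$.
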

\begin{proof}
The claim holds by definition when $n=0$ and $n=1$, so assume for induction that it is true for $k$ and consider a path from the root
with label $\overline{a_1 \dots a_{k+1}}$ which leads to a vertex $v$. By the inductive hypothesis the prefix path labelled $\overline{a_1 \dots a_k}$
leads to $z_h$ where $h = a_1 \dots a_k$ in $G$. 
If $h = 1$ then $a_{k+1} = g$ 
 and there is a path from the root vertex $z_1$ to $v$ labelled $\overline{a_{k+1}}$,
so by the case $n=1$ already established we have $v = z_{a_{k+1}} = z_g$ as required. So assume from now on that $h \neq 1$.

Now if $g = 1$ then $h a_{k+1} = 1$ in $G$, so the two-letter word $r = h a_{k+1}$ is one of the defining relators of $G$. 
By Lemma~\ref{lemma_pathtodeltar} there is a morphism from $\Omega_r$ to $S\Gamma(w)$, taking the root to the root, so the path
from the root labelled $x_h y_h x_{a_{k+1}} y_{a_{k+1}}$ must be closed. But this path ends at $v$, so we must have $v = z_1 = z_g$.

If $g \neq 1$ then it follows from the definition of $w$ that $\overline{h a_{k+1} (g^{-1}})$ is readable from
the root in $S\Gamma(w)$ where $g^{-1}$ is the positive letter of $A$ corresponding to the inverse in $G$ of $A$, so that $h a_{k+1} (g^{-1})$ is a positive
word over $A$. Now by definition we have $ha_{k+1} g^{-1} = 1$ in $G$, so the three-letter word $r = h a_{k+1} (g^{-1})$ is one of the defining relators of $G$. 
By Lemma~\ref{lemma_pathtodeltar} there is a morphism from $\Omega_r$ to $S\Gamma(w)$, taking the root to the root, so this path must be closed.
Moreover the word $gg^{-1}$ is also a defining relator of $G$ and $\overline{gg^{-1}}$ is also readable from the root, and hence by Lemma~\ref{lemma_pathtodeltar} must be read around a closed path. Since the graph is bi-deterministic, it follows that $v = z_g$.
\end{proof}

\begin{lemma}\label{lemma_omegaisomorphism}
The Sch\"utzenberger graph $S\Gamma(w)$ is isomorphic to $\Omega$.
\end{lemma}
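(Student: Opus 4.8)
The plan is to exhibit root-preserving morphisms between $\Omega$ and $\sgw$ in both directions and then appeal to Lemma~\ref{lemma_uniquemap}: both graphs are connected, deterministic inverse graphs (connectedness of $\Omega$ is immediate from its construction, since $A$ generates $G$, every vertex $u_{g,a}$ or $t_{g,r}$ is joined to some $v_g$ by an explicitly constructed edge, and the attached copies of $\sgo$ are connected with roots among these vertices), so mutually root-preserving morphisms are automatically mutually inverse isomorphisms. The morphism $\sgw \to \Omega$ taking the root to the root has already been supplied, so all the work lies in constructing a root-preserving morphism $\psi \colon \Omega \to \sgw$.

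To build $\psi$, first define it on the ``skeleton'' vertices. Set $\psi(v_g) = z_g$ for every $g \in G$; in particular $\psi(v_1) = z_1$ is the root of $\sgw$. For a vertex $u_{g,a}$, note that $\overline{ga}$ is readable from the root of $\sgw$ (as $ga$ is a word over $A$ of length at most $2$), so by Lemma~\ref{lemma_pathsgowheretheyshould} its prefix $\overline{g}$ reaches $z_g$ and hence $x_a$ is readable from $z_g$; let $\psi(u_{g,a})$ be the vertex so reached. For $r = r_1 \cdots r_{|r|} \in R$ and $g \in G$, the word $\overline{g r}$ has length at most $4$ and so is readable from the root of $\sgw$, and again its prefix $\overline{g}$ reaches $z_g$, so $\overline{r}$ is readable at $z_g$; by Lemma~\ref{lemma_pathtodeltar} there is a morphism $\Omega_r \to \sgw$ taking the root of $\Omega_r$ to $z_g$, and we let $\psi(t_{g,r})$ be the image of $t_{1,r}$ under this morphism (equivalently, the vertex of $\sgw$ reached from $z_g$ by reading $x_{r_1}\delta_{r,1}$). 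Finally, at each $u_{g,a}$ and each $t_{g,r}$ we extend $\psi$ over the attached copy of $\sgo$ by composing with the injective morphism $\sgo \to \sgw$ taking $1$ to $\psi(u_{g,a})$ (respectively $\psi(t_{g,r})$) furnished by Lemma~\ref{sgoembeds}. Since in $\Omega$ distinct attached copies of $\sgo$ meet one another and the rest of $\Omega$ only at their roots, these extensions are mutually consistent and consistent with the values of $\psi$ already fixed, so $\psi$ becomes a well-defined map on the whole vertex set of $\Omega$.

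I would then check that $\psi$ carries edges to edges. The $x_a$-edges $v_g \to u_{g,a}$ are handled by the very definition of $\psi(u_{g,a})$; the $y_a$-edges $u_{g,a} \to v_{ga}$ because reading $\overline{a} = x_a y_a$ from $z_g$ lands at $z_{ga} = \psi(v_{ga})$ by Lemma~\ref{lemma_pathsgowheretheyshould}; the $\delta_{r,k}$-edges $u_{g r_1 \cdots r_{k-1},\,r_k} \to t_{g,r}$ because the morphism $\Omega_r \to \sgw$ used to define $\psi(t_{g,r})$ sends the relevant vertices of $\Omega_r$ to precisely the vertices $\psi$ assigns to them (one verifies, using Lemma~\ref{lemma_pathsgowheretheyshould} to identify the images of the $v$- and $u$-vertices of $\Omega_r$ and the determinism of $\sgw$, that this morphism is compatible with $\psi$); and edges inside an attached copy of $\sgo$ are sent to edges by construction. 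This produces the required root-preserving morphism $\psi$, and Lemma~\ref{lemma_uniquemap} then finishes the proof.

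The main obstacle, and essentially the only part needing care rather than routine verification, is the treatment of the vertices $t_{g,r}$ and the $\delta$-edges: one must confirm that as $k$ ranges over $1, \dots, |r|$ the vertices of $\sgw$ obtained by reading the first two letters of $s_{r,k}$ starting from $z_{g r_1 \cdots r_{k-1}}$ all coincide --- the $\delta$-determinism phenomenon already isolated inside the proof of Lemma~\ref{lemma_pathtodeltar} --- and that this common vertex, the vertices reached along the $x$-edges, and the vertices $z_{g r_1 \cdots r_j}$ all assemble compatibly. In effect one must run the local argument of Lemma~\ref{lemma_pathtodeltar} uniformly at every translate $z_g$ of the root and splice the resulting local morphisms together consistently with the definition of $\psi$ on the $u$- and $v$-vertices.
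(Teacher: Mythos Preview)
Your proposal is correct and follows essentially the same approach as the paper: both use Lemma~\ref{lemma_uniquemap} to reduce to constructing a root-preserving morphism $\Omega \to \sgw$, and both define that morphism on the $v$-, $u$-, $t$-vertices and the attached copies of $\sgo$ in exactly the way you describe, invoking Lemma~\ref{lemma_pathtodeltar} and Lemma~\ref{lemma_pathsgowheretheyshould} at the same points. One small remark: you appeal to Lemma~\ref{sgoembeds} to extend $\psi$ over the attached copies of $\sgo$, but that lemma assumes $M$ is \roi, which is only established \emph{after} the present lemma in the paper; since all you need here is the existence of a (not necessarily injective) morphism $\sgo \to \sgw$ taking $1$ to a prescribed vertex, and this holds for any special inverse monoid (it is the first paragraph of the proof of Lemma~\ref{sgoembeds}, which does not use the \roi\ hypothesis), you should cite that more elementary fact instead to avoid any appearance of circularity.
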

\begin{proof}
By Lemma~\ref{lemma_morphismtoomega}, there is a morphism from $S\Gamma(w)$ to $\Omega$, taking the root to the vertex $v_1$, so by Lemma~\ref{lemma_uniquemap} it
will suffice to show that there is also a morphism from
$\Omega$ to $S\Gamma(w)$, taking $v_1$ to the root. We can define such a morphism as follows:
\begin{itemize}
\item Each vertex of the form $v_g$ is mapped to the vertex $z_g$. (In particular, $v_1$ is mapped to the root vertex $z_1$.)
\item Each vertex of the form $u_{g,a}$ is mapped to the vertex at the end of the unique edge labelled $x_a$ leaving $z_g$; by the definition
of $w$ there is an edge in $S\Gamma(w)$ from $g$ to this vertex labelled $x_a$ and by Lemma~\ref{lemma_pathsgowheretheyshould} and the fact that 
$x_g y_g x_a y_a$ is readable from the root, there is also an edge from this vertex to $w_{ga}$ labelled $y_a$.
\item Each vertex of the form $t_{g,r}$ is mapped to the image of the vertex $t_{1,r}$ under the morphism from $\Omega_r$ to
$S\Gamma(w)$ which takes the root to $z_g$; the existence of a morphism from $\Omega_r$ ensures that this vertex has edges
leading to the correct images of vertices of the form $u_{h,a}$.
\item Since $S\Gamma(w)$ is a Sch\"utzenberger graph, for every vertex $v$ in it there is a morphism from $S\Gamma(1)$ to $S\Gamma(w)$ taking the root
to $v$. Each attached copy of $S\Gamma(1)$ in $\Omega$ is attached at some vertex $v$ on which we have already defined our map. We map the whole copy
of $S\Gamma(1)$ to $S\Gamma(w)$ by the morphism which takes the root to the appropriate vertex.
\end{itemize}
At each stage we have verified that the new vertices on which we define the map are sent to vertices which have the required edges to the images of those vertices on which it is already defined; thus, the given map on vertices can be extended to edges to give a morphism as required.
\end{proof}

\begin{lemma}\label{lemma_misroi}
The monoid $M$ is \roi.
\end{lemma}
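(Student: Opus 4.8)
The plan is to verify condition~(ii) of Proposition~\ref{roiequiv}: that $\sgo$ embeds naturally in the Cayley graph of the maximal group image $M/\sigma$. Concretely, write $\phi\colon\sgo\to\mathrm{Cayley}(M/\sigma)$ for the natural morphism sending the root to the identity — this exists by Lemma~\ref{lemma_morphismfromschutz}, since $\mathrm{Cayley}(M/\sigma)$ is deterministic, the empty word is readable from the identity, and every relator $s_{r,k}$ represents the identity in $M/\sigma$ and hence reads a closed path at every vertex — and the goal is to show $\phi$ is injective on vertices. The first step is to identify $M/\sigma$. Reading the presentation $\langle X\cup Y\cup\Delta\mid s_{r,k}\rangle$ as a group presentation: for each relator $r$ the relations $s_{r,2},\dots,s_{r,|r|}$ express $\delta_{r,k}=x_{r_k}^{-1}y_{r_{k-1}}^{-1}\delta_{r,k-1}$ and so can be used (as Tietze moves) to eliminate $\delta_{r,2},\dots,\delta_{r,|r|}$; the one surviving relation $s_{r,1}$ then telescopes around the cycle — with $\delta_{r,1}$ cancelling — to the relation $\overline{r}=1$. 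Setting $z_a:=x_a y_a=\overline{a}$ and eliminating $y_a=x_a^{-1}z_a$, one obtains $M/\sigma\cong F(X)*G*F(\Delta_1)$, where $F(X)$ is free on $\{x_a\mid a\in A\}$, $F(\Delta_1)$ is free on $\{\delta_{r,1}\mid r\in R\}$, and $G$ is the factor generated by the $z_a$ (the relations $\overline{r}=1$ being exactly the multiplication table of $G$ together with its consequences). Write $\theta\colon G\hookrightarrow M/\sigma$ for the inclusion of this factor, so $\theta(a)=z_a$.

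The second step uses the description of $\sgo$ from the proof of Lemma~\ref{lemma_sgoprops} as a tree of copies of the finite graph $\Gamma'$: adjacent copies share exactly one vertex (the root of one being a non-root vertex — $P_a$, $Q_b$ or $T_{r,k}$ — of the other) and non-adjacent copies are disjoint. Rewriting, in $M/\sigma$, $\delta_{r,k}=x_{r_k}^{-1}\theta(p_k)^{-1}x_{r_1}\delta_{r,1}$ where $p_k:=r_1\cdots r_{k-1}\in G$, and $y_b=x_b^{-1}z_b$, one computes that inside a copy of $\Gamma'$ whose root maps to $g\in M/\sigma$ the vertex $P_a$ maps to $gx_a$, the vertex $Q_b$ maps to $gz_b^{-1}x_b$, and the vertex $T_{r,k}$ maps to $g\,\theta(p_k)^{-1}x_{r_1}\delta_{r,1}$. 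In particular, passing from a copy to one of its children multiplies $g$ on the right by one of the ``atoms'' $x_a$, $z_b^{-1}x_b$, or $\theta(p_k)^{-1}x_{r_1}\delta_{r,1}$.

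It remains to prove $\phi$ injective. Within a single copy the images displayed above are pairwise distinct by uniqueness of free-product normal forms (the $x_a$ and the $\delta_{r,1}$ being distinct free generators and $\theta$ injective). For global injectivity, note that each atom, and each image of a non-root vertex of $\Gamma'$, has a normal form of the restricted shape: an optional leading $G$-syllable, a mandatory $F(X)$-syllable equal to a single positive generator, and an optional trailing $F(\Delta_1)$-syllable equal to a single generator $\delta_{r,1}$; when such words are concatenated only harmless merging of $F(X)$-syllables can occur, so the set of atoms is a code and $\phi(v)$ factors uniquely as a product of atoms. From this factorisation one reads off both the path in the tree of copies of $\Gamma'$ leading to the copy containing $v$ and the position of $v$ in that copy; hence $\phi(v)$ determines $v$, so $\phi$ is injective, $\sgo$ embeds in $\mathrm{Cayley}(M/\sigma)$, and $M$ is \roi. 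The main obstacle is exactly this last step — checking that the tree of copies of $\Gamma'$ embeds, i.e.\ that the atoms form a code — which is not deep but is delicate, precisely because $X\cup Y\cup\Delta$ is not the free-product generating set ($y_a$ and the $\delta_{r,k}$ with $k\geq 2$ being composite elements); it is to keep this bookkeeping manageable that the construction uses such a redundant presentation of $G$.
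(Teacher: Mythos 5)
Your argument is correct and in substance coincides with the paper's: the paper likewise identifies $M/\sigma$ as a free product $(\text{free group})\ast G$ by eliminating all but one $\delta_{r,k}$ per relator (keeping $\delta_{r,|r|}$ where you keep $\delta_{r,1}$), and then proves by the same free-product normal-form analysis that the images of the proper prefixes of the relators $s_{r,k}$ --- which are precisely your three families of ``atoms'' $x_a$, $z_b^{-1}x_b$ and $\theta(p_k)^{-1}x_{r_1}\delta_{r,1}$ --- are distinct and freely generate a free submonoid, i.e.\ form a code. The only difference is the final reduction: the paper deduces injectivity of $\sigma$ on $\GreenR_1$ directly from Proposition~\ref{prop_r1gen} (every right unit is a product of prefix elements), whereas you deduce condition (ii) of Proposition~\ref{roiequiv} from the tree-of-$\Gamma'$ description of $\sgo$; since the vertices of $\sgo$ correspond bijectively to the right units, these are the same reduction in different clothing.
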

\begin{proof}
The maximal group image is given by the group presentation 
\[
K = \Gpres{X \cup Y \cup \Delta}{s_{r,k} \ (r \in R, 1 \leq k \leq |r|)}
\]
where 
\[
s_{r,k} = x_{r_k} \delta_{r,k} (\delta_{r,k-1})^{-1} y_{r_{k-1}} \in X \Delta \Delta^{-1} Y
\]
with indices interpreted modulo $|r|$.
Our aim is to identify the group $K$ by performing certain Tietze transformations to simplify the presentation.  

Fix $r \in R$ where $|r|=n$ and consider the set of relators $s_{r,k}$. We can eliminate $\delta_{r,1}$ and the relator $s_{r,1}$ by rearranging the latter as
\[
\delta_{r,1} = x_{r_1}^{-1}  y_{r_{n}}^{-1} \delta_{r,n}, 
\] 
and substituting the right-hand-side in place of $\delta_{r,1}$ in $s_{r,2}$, which is the only other relation in which $\delta_{r,1}$ appears. Then we may eliminate $\delta_{r,2}$ and $s_{r,2}$ using 
\[
\delta_{r,2} 
= x_{r_2}^{-1}  y_{r_{1}}^{-1} \delta_{r,1} 
= x_{r_2}^{-1}  y_{r_{1}}^{-1} x_{r_1}^{-1}  y_{r_{n}}^{-1} \delta_{r,n}.
\] 
and appropriately modifying $s_{r,3}$.
Continuing in this way once we have eliminated all of the generators  
$\delta_{r,1}, \ldots, \delta_{r,n-2}$ our original set of relations will be reduced just to the following two relations 
\begin{eqnarray*} 
\delta_{r,n-1} &=& 
x_{r_{n-1}}^{-1}  y_{r_{n-2}}^{-1}  
x_{r_{n-2}}^{-1}  y_{r_{n-3}}^{-1} \ldots
x_{r_2}^{-1}  y_{r_{1}}^{-1} x_{r_1}^{-1}  y_{r_{n}}^{-1} \delta_{r,n}
\\
1 &=& x_{r_n}^{-1}  y_{r_{n-1}}^{-1} \delta_{r,n-1} \delta_{r,n}^{-1} .  
  \end{eqnarray*}
Finally we eliminate $\delta_{r,n-1}$ using the first of these relations, and substituting into the second we obtain the single relation 
\begin{eqnarray*} 
1 & =  & x_{r_n}^{-1}  y_{r_{n-1}}^{-1} 
x_{r_{n-1}}^{-1}  y_{r_{n-2}}^{-1}  
x_{r_{n-2}}^{-1}  y_{r_{n-3}}^{-1} \ldots
x_{r_2}^{-1}  y_{r_{1}}^{-1} x_{r_1}^{-1}  y_{r_{n}}^{-1} \delta_{r,n}
\delta_{r,n}^{-1} \\
& =  & x_{r_n}^{-1}  y_{r_{n-1}}^{-1} 
x_{r_{n-1}}^{-1}  y_{r_{n-2}}^{-1}  
x_{r_{n-2}}^{-1}  y_{r_{n-3}}^{-1} \ldots
x_{r_2}^{-1}  y_{r_{1}}^{-1} x_{r_1}^{-1}  y_{r_{n}}^{-1}.
  \end{eqnarray*}
By cyclically permuting we see that this relation can be replaced by the relation $\overline{r}=1$ where  
\[
\overline{r} = \overline{r_1 \ldots r_{|r|}} = 
x_{r_1}y_{r_1} \ldots x_{r_n}y_{r_n}. 
\]
Note that the generators $\delta_{r,1}, \ldots, \delta_{r,n-1}$ were all eliminated using these Tietze transformations; the generator $\delta_{r,n}$ was not eliminated but no longer features in any relations, and will therefore generate a free factor. 

For each $r \in R$ define $\lambda_r = \delta_{r,|r|}$ and set $\Lambda = \{\lambda_r \mid r \in R \}$. Repeating the above sets of Tietze transformations for every $r \in R$ we obtain the following presentation for the maximal group image
\[
K = 
\Gpres{X \cup Y \cup \Lambda}{ \overline{r}=1 \ (r \in R)}
=
FG(\Lambda) \ast \Gpres{X \cup Y}{\overline{r}=1 \ (r \in R)}
\]
Now for each $a \in A$ add a new redundant generator $z_a$ and relation $z_a = x_ay_a$ to the presentation and set $Z = \{z_a \mid a \in A \}$.    
Since $y_a = {x_a}^{-1}z_a$ we can then eliminate the redundant generators $\{y_a \mid a \in A\}$ from the presentation giving   
\begin{align*}
K & \cong 
FG(\Lambda) \ast 
\Gpres{X \cup Z}{\tilde{r}=1 \ (r \in R)} \\
& \cong 
FG(\Lambda \cup X) \ast \Gpres{A}{R} \cong  
FG(\Lambda \cup X) \ast G 
\end{align*}
where for $r = a_{i_1} \ldots a_{i_k}$ we define  
$\tilde{r} = z_{a_{i_1}} \ldots z_{a_{i_k}}$. 

We now move on to proving that the monoid $M$ is \roi. By Proposition~\ref{prop_r1gen} the submonoid $\GreenR_1$ is generated as a monoid
by the proper prefixes of the defining relators $s_{r,k}$.
Hence every element of $\GreenR_1$ can be written as a product of the elements 
\[
x_{r_k}, \quad 
y_{r_k}^{-1}, \quad  
x_{r_k}\delta_{r,k}
\]  
where $r \in R$, $1 \leq k \leq |r|$.   
    We compute the image of each of these generators in the maximal group image:
\[
K = \Gpres{\Lambda \cup X \cup Z}{\tilde{r}=1 \textrm{ for all } r \in R}  
\]
where $\Lambda = \{ \lambda_r = \delta_{r,|r|} \mid r \in R \}$
and $Z = \{ z_a = x_ay_a \mid a \in A \}$:
\begin{itemize}
\item The image of $x_{r_k}$ is $x_{r_k}$.
\item The image of $y_{r_k}^{-1}$ is $z_{r_k}^{-1} x_{r_k}$. 
\item The image of 
$x_{r_{|r|}}\delta_{r,|r|}$ is 
$x_{r_{|r|}}\lambda_r$. 
\item For $r \in R$ and $1 \leq k < |r| = n$ the image of  
$x_{r_k}\delta_{r,k}$ 
is  
\begin{align*} 
x_{r_k}\delta_{r,k} 
&= 
x_{r_k}x_{r_k}^{-1}
y_{r_{k-1}}^{-1}x_{r_{k-1}}^{-1}
\ldots
y_{r_1}^{-1}x_{r_1}^{-1}
y_{r_n}^{-1}\delta_{r,n} \\
&= 
y_{r_{k-1}}^{-1}x_{r_{k-1}}^{-1}
\ldots
y_{r_1}^{-1}x_{r_1}^{-1}
y_{r_n}^{-1}\delta_{r,n} \\
&= 
z_{r_{k-1}}^{-1}
\ldots
z_{r_1}^{-1}
z_{r_n}^{-1} x_{r_n}
\delta_{r,n}  \\
&= 
z_{r_{k-1}}^{-1}
\ldots
z_{r_1}^{-1}
z_{r_{|r|}}^{-1} x_{r_{|r|}}
\lambda_r. 
 \end{align*}
 \end{itemize}
Recall that, due to the way in which we chose the original group presentation $\Gpres{A}{R}$ for the finite group $G$, none of the generators $a \in A$ is equal to the identity of $G$.         

Moreover, all the defining relators are positive words over $A$. Finally, notice that no proper subword of a defining relator is equal to $1$ in $G$; indeed if it were then (since the relators are all of length $2$ or $3$) this would imply that a
single letter also equal to $1$ in $G$, but we chose our generating
set to exclude the identity element.

To prove that the monoid is {$\GreenR_1$-injective} we need in particular that all the elements in the set 
\[
\{
z_{r_{k-1}}^{-1}
\ldots
z_{r_1}^{-1}
z_{r_{|r|}}^{-1} x_{r_{|r|}}\lambda_r \mid
r \in R, 1 \leq k \leq |r|-1
\}
\]
are distinct in the group $K$. Because $K$ admits a decomposition as $FG(\Lambda \cup X) \ast G$ two such elements can clearly only be equal in $K$ if they
correspond to the same relator $r$, in other words, if they are
$z_{r_{i-1}}^{-1}
\ldots
z_{r_1}^{-1}
z_{r_{|r|}}^{-1} x_{r_{|r|}} \lambda_r$ and 
$z_{r_{j-1}}^{-1}
\ldots
z_{r_1}^{-1}
z_{r_{|r|}}^{-1} x_{r_{|r|}} \lambda_r$ 
for the same $r$ and different $i$ and $j$.
Assuming without loss of generality that $i > j$ and cancelling, we obtain
$z_{r_{i-1}}^{-1} \dots z_{r_{j}}^{-1} =1$.
Now again using the free product decomposition of $K$, it follows that
$r_j \dots r_{i-1} = 1$ in $G$. But this contradicts the fact established above that no proper subword of a defining relator in $G$ is equal to $1$ in $G$.

Next we claim that the submonoid of 
\[
K = 
FG(\Lambda \cup X) \ast 
\Gpres{Z}{\tilde{r}=1 \ (r \in R)} 
\]
generated by the set  
\begin{align*}
Q = \{
& x_{r_k}, \;
z_{r_k}^{-1} x_{r_k}, \; 
x_{r_{|r|}}\lambda_r \mid
r \in R, 1 \leq k \leq |r|
\} \\ 
& \cup 
\{
z_{r_{k-1}}^{-1}
\ldots
z_{r_1}^{-1}
z_{r_{|r|}}^{-1} x_{r_{|r|}}\lambda_r \mid
r \in R, 1 \leq k \leq |r|-1
\}
\end{align*}
of all the images of all the prefixes of relators in the presentation of $M$  
is a free monoid freely generated by these generators.  
These generators are all distinct by the argument above.
Now any product of these generators is in normal form with respect to the free product decomposition  
\[
K = FG(\Lambda \cup X) \ast \Gpres{Z}{\tilde{r}=1 \ (r \in R)}
= FG(\Lambda \cup X) \ast G
\]
From this together with the fact that every generator comes from the set $GX\Lambda$ we can deduce that the submonoid of $K$ generated by $Q$ is a free monoid with free generating set $Q$.
Now, by Proposition~\ref{prop_r1gen} the submonoid $\GreenR_1$ is generated as a monoid by the proper prefixes of the defining relators $s_{r,k}$.
Thus given two distinct elements $m$ and $n$ of $\GreenR_1$ they can each be written as 
a product of the elements
\[
x_{r_k}, \quad 
y_{r_k}^{-1}, \quad  
x_{r_k}\delta_{r,k}
\]  
where $r \in R$, $1 \leq k \leq |r|$.  
Since $m \neq n$ in the monoid these two products of prefixes must be distinct as words over 
$\{ x_{r_k},  
y_{r_k}^{-1},   
x_{r_k}\delta_{r,k}
  \}$.
This means that these products map to distinct products over $Q$ 
in $K$ 
from which it follows that $m$ and $n$ map to distinct elements of $K$, as the submonoid of $K$ generated by $Q$ is a free monoid with free generating set $Q$.
 \end{proof}
Notice that the argument in the proof of  
Lemma~\ref{lemma_misroi} actually shows that the submonoid $\GreenR_1$ of right units of $M$ is a free monoid of finite rank.

We now have all the ingredients needed to prove the main theorem of this section.
\begin{proof}[Proof of Theorem~\ref{thm:trivGroupUnits}]
For each finite group $G$ we have constructed a special inverse monoid $M$ which is \roi\ 
 (by Lemma~\ref{lemma_misroi}), 
has trivial group of units (by Lemma~\ref{lemma_sgoprops}),
and has a Sch\"utzenberger graph isomorphic (by
 Lemma~\ref{lemma_omegaisomorphism}) to the graph $\Omega$, which has automorphism group isomorphic to $G$
 (by Lemma~\ref{lem:AutOmega}). The result now follows from the fact that maximal subgroups of $M$ are precisely the
 automorphism groups of the Sch\"utzenberger graphs \cite[Theorem 3.5]{Stephen:1990ss}.
 \end{proof}

\begin{remark} 
The inverse monoids constructed in the proof of Theorem~\ref{thm:trivGroupUnits} are not in general $E$-unitary. 
Indeed, let $M$ be the inverse monoid constructed in Example~\ref{Ex:R1ConstructionExample}. 
Two of the relators in the presentation of that inverse monoid are 
\[
x_{a} \delta_{r,1} (\delta_{r,3})^{-1} y_{a}=1 
\ \ \mbox{and} \ \
x_{a} \delta_{r,3} (\delta_{r,2})^{-1} y_{a}=1.
\]
It follows that in the maximal group image $K$ we have  
\[
x_{a} \delta_{r,1} (\delta_{r,3})^{-1} y_{a}= 
x_{a} \delta_{r,3} (\delta_{r,2})^{-1} y_{a}
\]
from which 
it follows that 
$\delta_{r,1}\delta_{r,3}^{-1}\delta_{r,2}\delta_{r,3}^{-1}=1$ in $K$. 
Lemma~\ref{lemma_sgoprops}(i) and points three and four in Proposition~\ref{prop_greenschutz} together show that every element of $\Delta$ is neither a left unit not a right unit.   
Using this, it can then be shown by constructing the Sch\"utzenberger graph $S\Gamma(\delta_{r,1}\delta_{r,3}^{-1}\delta_{r,2}\delta_{r,3}^{-1})$ (or alternatively by applying
 Theorem~\ref{Thm:NewFree} below, which tells us that $\Delta$ generates a free inverse submonoid of $M$) that the word $\delta_{r,1}\delta_{r,3}^{-1}\delta_{r,2}\delta_{r,3}^{-1}$ does not represent an idempotent in $M$, so $M$ is not $E$-unitary.

Variations of this argument show that the inverse monoids constructed in the proof of Theorem~\ref{thm:trivGroupUnits} are never
$E$-unitary. Indeed, if the finite group $G$ has a pair of non-identity elements $g,h \in G$ such that $gh \neq 1$ then a set $R$ of defining relators
for $G$ contains distinct relators $gha$ and $agh$ for some $a$, from which we obtain two distinct defining relators for $M$ which begin with
$x_h$ and ending with $y_g$, and apply a similar argument to that above. If $G$ does not have such a pair of elements then it must be isomorphic to
$\mathbb{Z}_2$ or $\mathbb{Z}_3$. The case of $\mathbb{Z}_3$ is exactly the above example, while in the case of $\mathbb{Z}_2$ the set $R$ contains
a relator for $G$ of the form $aa$, which yields two relators for $M$ beginning $x_a$ and ending $y_a$, to which a similar argument can again be applied.
\end{remark}

We do not know how to construct an $E$-unitary special inverse monoid with trivial group of units and non-trivial maximal subgroups, or even just with
a maximal subgroup which does not embed into the group of units. We ask if this is possible.

\begin{question}
Do the maximal subgroups of an $E$-unitary special inverse monoid necessarily embed into the group of units?
\end{question}

\section{Generators which are not right or left units}\label{sec_nonroi}

In this final section we note the following rather surprising theorem, which allows us in particular to easily construct examples of finitely presented special inverse monoids with many different non-isomorphic maximal subgroups. Indeed it suggests that this kind of behaviour is in some sense ``the norm'' for special inverse monoids! This contrasts sharply with the case of special (non-inverse) monoids, where by a result of Malheiro \cite{Malheiro2005} all maximal subgroups lie in the $\GreenD$-class of $1$ and therefore are necessarily isomorphic. 

\begin{theorem}\label{thm_rubbishdichotomy} Let $M$ be a special inverse monoid defined by a presentation with a generator which represents neither a left nor a right unit. Then $M$ contains every finite subgroup of the group of units as a maximal subgroup.
\end{theorem}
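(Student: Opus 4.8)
The plan is to exploit the generator $z$ which is neither a left nor a right unit to build a ``copy'' of each finite subgroup $H$ of the group of units somewhere below $\GreenD_1$. The key idea is that such a generator $z$ represents an element of $M\setminus\GreenD_1$ (by Corollary~\ref{cor_ddesc}, since a generator in $\GreenD_1$ would have to be in $\GreenR_1\cup\GreenL_1$), and moreover $z'z$ and $zz'$ are idempotents strictly below the identity. I would first fix a finite subgroup $H\leq\GreenH_1$, with generating word $h_1,\dots,h_k$ over $X^{\pm1}$ representing elements of $H$, and consider the idempotent $e = z'(h_1h_1' \cdots)z$ — more precisely, since every element of $H$ is a unit, for each $h\in H$ the word $z'\,\ol{h}\,z$ labels a path in $\sgo$ (or in the relevant Sch\"utzenberger graph) from the vertex $\bar z$ to itself, because $\ol h$ is a unit so labels a loop at the identity. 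The element $z'z$ is an idempotent $f$ with $f<1$ (it cannot equal $1$ since $z$ is not a left unit), and the subgroup of $M$ we want is $\{\,z'\,\ol h\,z : h\in H\,\}$ sitting inside $\GreenH_f$.

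The key steps, in order, would be: (1) Show $z'z$ is a non-identity idempotent, using that $z$ is not a left unit together with Proposition~\ref{prop_greenschutz} (if $z'z=1$ then $z$ would be left invertible). (2) For each $h\in H$ choosing a representative word, verify that $z'\,\ol h\,z$ is in $\GreenH_{z'z}$: it is clearly $\GreenL$- and $\GreenR$-related to $z'z$ because $(z'\ol h z)(z'\ol h z)' = z'\ol h z z' \ol h' z = z'\ol h\, \ol h' z$ (using $zz'$ is an idempotent commuting appropriately) and since $\ol h$ is a unit $\ol h\,\ol h' = 1$, giving $z'z$; dually on the other side. (3) Check the map $h\mapsto z'\,\ol h\,z$ is a homomorphism: $(z'\,\ol h\,z)(z'\,\ol g\,z) = z'\,\ol h\,(zz')\,\ol g\,z$, and here one needs $zz'$ to act as an identity on the relevant factor — this follows because $zz'\geq zz'$... more carefully, since $h$ is a unit, $\ol h' (zz')\ol h$ is an idempotent and one manipulates using that idempotents commute; alternatively argue in the Sch\"utzenberger graph $S\Gamma(z'z)$ directly: the words $z'\ol h z$ all label loops at a common vertex and composition of loops corresponds to multiplication in $H$. (4) Check injectivity: if $z'\,\ol h\,z = z'\,\ol g\,z$ then left-multiplying by $z$ and right-multiplying by $z'$ and using $zz'=zz'$, $z'z$ as appropriate idempotents — here one must be careful, and the cleanest route is again Sch\"utzenberger-graph-theoretic, using that the loop labelled $z'\ol h z$ at the relevant vertex determines $\ol h$ as an automorphism-type datum.

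I expect the main obstacle to be step (3)/(4): verifying that $h\mapsto z'\,\ol h\,z$ is a well-defined injective homomorphism rather than merely a map landing in $\GreenH_{z'z}$. The difficulty is that $z'z$ need not act as a two-sided identity ``between'' the two factors — one only knows $zz'$ and $z'z$ are idempotents, and the product $(z'\ol h z)(z'\ol g z)$ contains the factor $zz'$ in the middle, which is not obviously absorbed. The right way to handle this is probably to pass to the Sch\"utzenberger graph picture: by Stephen's theory the $\GreenH$-class of $z'z$ is the automorphism group of $S\Gamma(z'z)$, and one should exhibit $H$ as acting on this graph. Since $\ol h$ is a unit, $S\Gamma(1)$ has an automorphism $\phi_h$ for each $h\in H$ (by \cite[Theorem~3.5]{Stephen:1990ss} the automorphism group of $S\Gamma(1)$ is the group of units), and $S\Gamma(z'z)$ contains an embedded copy of $S\Gamma(1)$ rooted at the relevant vertex; one must check these automorphisms of the embedded $S\Gamma(1)$ extend to automorphisms of all of $S\Gamma(z'z)$ fixing everything outside, which should follow from the fact that the rest of $S\Gamma(z'z)$ hangs off that copy of $S\Gamma(1)$ in a way determined by words read from the identity. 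Once this is set up, well-definedness, homomorphism and injectivity all become transparent, and one reads off that $H$ embeds as a maximal subgroup in $\GreenH_{z'z}$.
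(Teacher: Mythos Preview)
Your approach has a fundamental gap: the idempotent $z'z$ you target does not depend on the finite subgroup $H$. The maximal subgroup $\GreenH_{z'z}$ is therefore a single fixed group, and it cannot be isomorphic to \emph{every} finite subgroup of the group of units simultaneously. At best your argument could show that $H$ embeds in $\GreenH_{z'z}$, but the theorem asserts that $H$ \emph{is} (isomorphic to) a maximal subgroup, not merely that it embeds in one. Your closing sentence (``$H$ embeds as a maximal subgroup in $\GreenH_{z'z}$'') conflates these two things. Even the embedding claim is problematic, for exactly the reason you flag in step~(3): the product $(z'\,\ol h\,z)(z'\,\ol g\,z)=z'\,\ol h\,(zz')\,\ol g\,z$ contains the idempotent $zz'$ in the middle, and there is no general reason for it to be absorbed, so $h\mapsto z'\,\ol h\,z$ need not be a homomorphism.

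The paper's proof instead constructs, for each finite subgroup $Q=\{\ol{u_1},\dots,\ol{u_k}\}$ of the group of units, a \emph{different} idempotent
\[
w \ = \ \prod_{i=1}^k u_i\, v\, v'\, u_i',
\]
depending on $Q$. It then analyses $\sgw$ directly: by Stephen's procedure $\sgw$ is a central copy of $\sgo$ with, at each vertex corresponding to an element of $Q$, a new outgoing $v$-edge leading to a fresh copy of $\sgo$. The hypothesis that $v$ is neither a left nor a right unit is exactly what guarantees these new $v$-edges cannot fold into the central copy of $\sgo$, so the picture is already deterministic. The central copy of $\sgo$ is then shown to be automorphism-invariant, and the automorphisms of $\sgw$ are precisely those automorphisms of the central $\sgo$ which permute the $|Q|$ distinguished vertices, i.e.\ the elements of $Q$ acting by left multiplication. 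By \cite[Theorem~3.5]{Stephen:1990ss} this identifies the maximal subgroup in the $\GreenD$-class of $w$ with $Q$.
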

\begin{proof}Choose a generator $v$ which is neither a left nor a right unit. Let $Q$ be a finite subgroup of the group of units, say $|Q| = k$, and let $u_1, \dots, u_k$ be words representing the elements of $Q$. Consider the element
$$w \ = \ \prod_{i = 1}^k u_i v v' u_i',$$
of $M$, noting that the order of the product is unimportant because the factors are idempotent and therefore commute. We shall describe the Sch\"utzenberger graph $\sgw$.

By Stephen's procedure (see Remark~\ref{rem:TreeOfBalls}), it is easy to see that $\sgw$ can be obtained by starting with $\sgo$, we shall call this the \textit{central subgraph} and we shall denote it by $C$, and for each vertex corresponding to an element of $Q$, gluing on an edge leaving it labelled $v$ and a new copy of $\sgo$ rooted at the far end of the edge, and bi-determinising. We claim that in fact this graph is already bi-deterministic, so that no bi-determinising is needed. Indeed, clearly no folding can take place within the copies of $\sgo$ since $\sgo$ is already bi-deterministic. The new $v$-edges cannot fold with each other because they do not share any endpoints. It remains only to show that the new $v$-edges cannot fold into an edge in one of the copies of $\sgo$. Notice that the new $v$-edges all connect at both ends into vertices of $\sgo$ corresponding to elements of the group of units. If there was an existing $v$-edge in $\sgo$ at one of these vertices for one of the new $v$-edges to fold into, it would therefore follow that $v$ is either a right unit or a left unit (depending on the orientation of the edge), giving a contradiction.

Recall that each preblock $P$ is a subgraph of $S\Gamma(w)$ that is isomorphic to $S\Gamma(1)$. Next we claim that for every preblock $P$ of $S\Gamma(w)$ either $P$ is contained in $C$ or else $P$ and $C$ have disjoint vertex sets. Indeed, let $P$ be a preblock. Choose some root $p$ of $P$. If $p$ is contained in $C$ then since $C$ is a copy of $S\Gamma(1)$ in $S\Gamma(w)$ it follows that $P$ is entirely contained in $C$. Now suppose $p$ lies outside $C$. If $P$ did intersect $C$ then we could choose a path of minimal possible length from a vertex in the intersection of $C$ and $P$ to $p$. By construction of the graph, this path must be labelled by a word of the form $vh$ where the $v$ traverses a $v$-edge from the Munn tree. By the construction of the graph, $h$ labels a path starting at the root inside a glued-on copy of $S\Gamma(1)$, and hence is right invertible. On the other hand, $vh$ labels a path inside the preblock $P$ ending at the root $p$ of $P$, and hence both $vh$ and $h$ are left invertible. But now conjugating $vh$ by the unit $h$ we again see that $hv$ is left invertible and thus $v$ is left invertible, which is a contradiction. This completes the proof of the claim.

We say that a preblock $P$ has property (*) if $P$ is \emph{not} contained in a preblock $P'$ such that $P'$ has a $v$-edge coming into its root. From the previous paragraph, and the construction of $S\Gamma(w)$, it follows that every preblock $P$ with property (*) must be contained in the central subgraph $C$. We claim that the central subgraph $C$ itself has property (*). Indeed, otherwise $C$ would be contained in some preblock $P$ where $P$ has a $v$-edge coming in. The root of $P$ must be in $C$ by the claim in the previous paragraph. But this implies that $P$ is contained in $C$ and thus $P=C$. If $e$ is a $v$-edge coming into the root of $P$ then from the structure of $S\Gamma(w)$ given in the second paragraph of the proof above, the initial vertex of $e$ cannot be outside of $P=C$, thus the edge $e$ must be contained in $P=C$. But now the isomorphism from $P$ to the central subgraph $C$ mapping the root $p$ of $P$ to the root of $C$ (which exists because they are both isomorphic to $S\Gamma(1)$) gives an automorphism of the central subgraph $C$ taking $p$ to the root of $C$. But considering the image of the edge $e$ under this automorphism, this implies that the root of $C$ has a $v$-edge coming into it, where that edge is contained in $C$. But $C$ is a copy of $S\Gamma(1)$, so this means that in $S\Gamma(1)$ there is a $v$-edge coming into the root. This implies that $v$ is left invertible, which is a contradiction.               

It follows from the previous paragraph that if we consider the collection all preblocks $P$ with property (*) ordered by subset inclusion then the central subgraph $C$ is the unique maximal element of this collection of preblocks. That in turn implies that automorphisms of $\sgw$ must restrict to automorphisms of the central subgraph $C$. Now it is easy to see that the automorphisms of $\sgw$ are exactly those automorphisms of the central subgraph which fix the set of vertices corresponding to $Q$, in other words, the automorphisms corresponding to elements of $Q$. Thus, the maximal subgroups in the $\GreenD$-class of $w$ are isomorphic to $Q$ as required.
\end{proof}

\begin{remark}
The proof of Theorem~\ref{thm_rubbishdichotomy} is clearly reminiscent of our reasoning with blocks in the $\GreenR_1$-injective case above; indeed the
similarity is our reason for including the result in this article, the main focus of which is otherwise on the \roi\ case. Since we
are not here assuming $\GreenR_1$-injectivity we do not have access to the machinery above to guarantee a block decomposition for every Sch\"utzenberger
graph, but it just so happens that the particular Sch\"utzenberger graph $\sgw$ constructed in the proof does have a block decomposition.
\end{remark}

As a consequence of Theorem~\ref{thm_rubbishdichotomy} we obtain the following corollary, which is a very slight strengthening (because the free inverse monoid has
rank $1$, rather than $2$) of a result which was established by other means in our recent work \cite[Corollary 5.11]{GrayKambitesHClasses}.

\begin{corollary}
There exists an $E$-unitary finitely presented special inverse monoid, which is the free product of a group and a free inverse monoid of rank $1$ and which has every finite group as a maximal subgroup.
\end{corollary}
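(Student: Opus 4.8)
The plan is to realise such a monoid as an application of Theorem~\ref{thm_rubbishdichotomy}. First I would fix a finitely presented group $G_0$ containing an isomorphic copy of every finite group: Thompson's group $V$ is such a group, since for every $n$ it contains a copy of the symmetric group $S_n$ (acting by permuting $n$ disjoint clopen pieces of the Cantor set), and hence by Cayley's theorem contains a copy of every finite group. Fix a finite group presentation $\Gpres{B}{S}$ for $G_0$, and let $t$ be a new letter. I would then consider the finite special inverse monoid presentation
\[
M \ = \ \Ipres{B \cup \lbrace t \rbrace}{\,S \cup \lbrace bb',\ b'b \mid b \in B \rbrace\,}.
\]

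The next step is to identify $M$ as $G_0 \ast \Igen{t}$, the free product of $G_0$ with the free inverse monoid of rank $1$. The relations $bb' = 1$ and $b'b = 1$ force each $b \in B$ to be a two-sided unit, so the monoid $\Ipres{B}{S \cup \lbrace bb',\ b'b \mid b \in B\rbrace}$ is generated by units, hence is a group, and as a group it is presented by $\Gpres{B}{S}$ (the relations $bb'=1$, $b'b=1$ holding automatically in any group), so it is $G_0$; since $t$ occurs in no defining relation, adjoining it yields the free product with $\Igen{t}$, by the standard behaviour of inverse monoid presentations under adjunction of a free generator. Consequently the group of units of $M$ contains (indeed equals) $G_0$, so every finite group embeds in the group of units of $M$.

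Finally I would verify the remaining properties and conclude. The presentation is finite and special by construction. By Proposition~\ref{prop_r1gen} the right units of $M$ are generated by the proper prefixes of the defining relators, none of which involves $t$, so $t$ is not a right unit; dually it is not a left unit. Since $G_0$ and the rank-one free inverse monoid are both $E$-unitary and (a well-known fact) a free product of $E$-unitary inverse monoids is again $E$-unitary, $M$ is $E$-unitary. Applying Theorem~\ref{thm_rubbishdichotomy} to the displayed presentation and the generator $t$, we obtain that $M$ contains every finite subgroup of its group of units --- in particular, every finite group --- as a maximal subgroup. The construction is otherwise routine; the only inputs that are not purely formal are the existence of a finitely presented group containing every finite group (which is classical, e.g.\ Thompson's $V$) and the preservation of $E$-unitarity under free products, and pinning down a precise reference for the latter is the main point requiring care.
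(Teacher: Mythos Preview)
Your proposal is correct and follows essentially the same approach as the paper: take a finitely presented group containing every finite group (you use Thompson's $V$, the paper uses Higman's universal group), form its free product with the rank-one free inverse monoid, observe $E$-unitarity is preserved, and apply Theorem~\ref{thm_rubbishdichotomy}. Your version is more explicit in writing down the presentation and in verifying via Proposition~\ref{prop_r1gen} that the free generator $t$ is neither a left nor a right unit, but the strategy is identical.
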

\begin{proof}
Take a finitely presented group $G$ having every finite group as a subgroup (for example, Higman's universal group, which contains every
finitely 
\textit{presented} group \cite[Theorem 7.3]{Lyndon:2001lh}), and consider the inverse monoid free product of $G$ with a free inverse monoid of rank one. That the monoid is $E$-unitary follows easily from the fact that groups and free inverse monoids are both $E$-unitary. The result now follows from
Theorem~\ref{thm_rubbishdichotomy}.
\end{proof}

Our final result gives sufficient conditions for a subset of generators to generate a free inverse monoid. 

\begin{theorem}\label{Thm:NewFree}
Let $M = \pres{X}{R}$ be a special inverse monoid and let $A$ be a subset of $X$ containing neither left nor right units.
Then the inverse submonoid of $M$ generated by $A$ is isomorphic to the free inverse monoid on $A$.    
  \end{theorem}
\begin{proof}
Consider two words $u$ and $w$ over $A^{\pm 1}$ that are not equal in the free inverse monoid on $A$. Let $\Gamma_u$ and $\Gamma_w$ be their Munn
trees, that is, the graphs obtained by bi-determinising the lines labelled by $u$ and $v$ respectively.
The Sch\"utzenberger graph $S\Gamma(u)$ [respectively, $S\Gamma(w)$] is obtained by attaching copies of $S\Gamma(1)$ to every vertex of $\Gamma_u$ [respectively, $\Gamma_w$] and bi-determinising.
By Proposition~\ref{prop_greenschutz}, the fact that no element of $A$ is a left or a right unit means that the root vertex of $S\Gamma(1)$ is not
incident with any edges labelled by letter in $A$. It follows that the graph given by attaching a copy of $S\Gamma(1)$ to each vertex of $\Gamma_u$ [respectively, $\Gamma_w$] is already bi-determinised and hence is already equal to $S\Gamma(u)$ [respectively, $S\Gamma(w)$].     

Now if the Munn trees $\Gamma_u$ and $\Gamma_w$ are not isomorphic as rooted graphs then (by swapping $u$ and $w$ if necessary) we may assume without loss of generality that $\Gamma_u$ does not embed into $\Gamma_w$ as a rooted subgraph. It follows that $u$ cannot be read from the root in $\Gamma_w$. Since by the previous paragraph $S\Gamma(w)$ is obtained from $\Gamma_w$ by attaching graphs which have no edges from $A$ incident with the root, it follows that $u$ also cannot be read from the root of $S\Gamma(w)$ and thus $u$. Hence, $u$ and $w$ are not $\gr$-related in $M$ and thus not equal in $M$.       

The remaining case is that $\Gamma_u$ and $\Gamma_w$ are isomorphic as rooted graphs. In this case, since $u \neq w$ in the free inverse monoid it follows that reading $w$ from the root in $\Gamma_u$ leads to a different terminal vertex than reading $u$ in $\Gamma_u$. Since by the previous paragraph $\Gamma_u$ embeds in
$S\Gamma(u)$, it follows that $uu^{-1}u \neq uu^{-1}w$ in $M$ and thus $u \neq w$ in $M$.

It follows that the natural homomorphism from the free inverse monoid on $A$ to $M$ is injective, as required.    
  \end{proof}

\section*{Acknowledgements}
The authors thank the anonymous referees for their many suggestions which helped to improve the exposition.


\begin{thebibliography}{10}

\bibitem{Gray2020}
R.~D. Gray.
\newblock Undecidability of the word problem for one-relator inverse monoids
  via right-angled {A}rtin subgroups of one-relator groups.
\newblock {\em Invent. Math.}, 219(3):987--1008, 2020.

\bibitem{GrayKambitesHClasses}
R.~D. Gray and M.~Kambites.
\newblock 
Maximal subgroups of finitely presented special inverse
monoids.
\newblock {\em J. Eur. Math. Soc. (JEMS)}, to appear, arXiv:2212.04204.

\bibitem{GrayNoraPedro2022}
R.~D. Gray, P.~V. Silva, and N.~Szakács.
\newblock Algorithmic properties of inverse monoids with hyperbolic and
  tree-like {S}chützenberger graphs.
\newblock {\em Journal of Algebra}, 611:651--687, 2022.

\bibitem{Halin1973}
R.~Halin.
\newblock Automorphisms and endomorphisms of infinite locally finite graphs.
\newblock {\em Abh. Math. Sem. Univ. Hamburg}, 39(1):251--283., 1973.

\bibitem{Hermiller:2010bs}
S.~Hermiller, S.~Lindblad, and J.~Meakin.
\newblock Decision problems for inverse monoids presented by a single sparse
  relator.
\newblock {\em Semigroup Forum}, 81(1):128--144, 2010.

\bibitem{Howie95}
J.~M. Howie.
\newblock {\em Fundamentals of {S}emigroup {T}heory}, volume~12 of {\em London
  Mathematical Society Monographs. New Series}.
\newblock The Clarendon Press, Oxford University Press, New York, 1995.
\newblock Oxford Science Publications.

\bibitem{Ivanov:2001kl}
S.~V. Ivanov, S.~W. Margolis, and J.~C. Meakin.
\newblock On one-relator inverse monoids and one-relator groups.
\newblock {\em J. Pure Appl. Algebra}, 159(1):83--111, 2001.

\bibitem{LawsonBook98}
M.~V. Lawson.
\newblock {\em Inverse {S}emigroups: the {T}heory of {P}artial {S}ymmetries}.
\newblock World Scientific Publishing Co., Inc., River Edge, NJ, 1998.

\bibitem{LindbladThesis}
S.~P. Lindblad.
\newblock {\em Inverse monoids presented by a single sparse relator}.
\newblock PhD thesis, University of Nebraska-Lincoln, 2003.

\bibitem{Lyndon:2001lh}
R.~C. Lyndon and P.~E. Schupp.
\newblock {\em Combinatorial {G}roup {T}heory}.
\newblock Classics in Mathematics. Springer-Verlag, Berlin, 2001.
\newblock Reprint of the 1977 edition.

\bibitem{MMS87}
S.~W. Margolis, J.~C. Meakin, and J.~B. Stephen.
\newblock Some decision problems for inverse monoid presentations.
\newblock In {\em Semigroups and their applications ({C}hico, {C}alif., 1986)},
  pages 99--110. Reidel, Dordrecht, 1987.

\bibitem{MMS1990}
S.~W.~Margolis, J.~C.~Meakin and 
J.~B.~Stephen.
\newblock Free objects in certain varieties of inverse semigroups.
\newblock {\em Canad. J. Math.}, XLII, 
6:1084--1097, 1990.

\bibitem{Malheiro2005}
A.~Malheiro.
\newblock Complete rewriting systems for codified submonoids.
\newblock {\em Internat. J. Algebra Comput.}, 15(2):207--216, 2005.

\bibitem{Meakin:2007zt}
J.~Meakin.
\newblock Groups and semigroups: connections and contrasts.
\newblock In {\em Groups {S}t. {A}ndrews 2005. {V}ol. 2}, volume 340 of {\em
  London Math. Soc. Lecture Note Ser.}, pages 357--400. Cambridge Univ. Press,
  Cambridge, 2007.

\bibitem{StephenThesis}
J.~B. Stephen.
\newblock {\em  Presentations of Monoids and Inverse Monoids. }
\newblock PhD thesis, University of Nebraska-Lincoln, 1987.

\bibitem{Stephen:1990ss}
J.~B. Stephen.
\newblock Presentations of inverse monoids.
\newblock {\em J. Pure Appl. Algebra}, 63(1):81--112, 1990.



\bibitem{Stephen93}
J.~B. Stephen.
\newblock Inverse monoids and rational subsets of related groups.
\newblock {\em Semigroup Forum}, 46(1):98--108, 1993.

\bibitem{Stephen98}
J.~B. Stephen.
\newblock Amalgamated free products of inverse semigroups.
\newblock {\em J. Algebra}, 208(2):399--424, 1998.

\end{thebibliography}
\end{document}